\newif\ifpdf
\numberwithin{equation}{section} \swapnumbers
\newtheorem{satz}{Satz}[section]
\newtheorem{theorem}[satz]{Theorem}
\newtheorem{proposition}[satz]{Proposition}
\newtheorem{corollary}[satz]{Corollary}
\newtheorem{lemma}[satz]{Lemma}
\newtheorem{assumption}[satz]{Assumption}
\newtheorem{remark}[satz]{Remark}
\newcommand{\bbr}{\mathbb{R}}
\newcommand{\bbe}{\mathbb{E}}
\newcommand{\bbn}{\mathbb{N}}
\newcommand{\bbp}{\mathbb{P}}
\newcommand{\cald}{\mathcal{D}}
\newcommand{\calf}{\mathcal{F}}
\begin{document}

\hyphenation{Swit-zer-land Ma-ru-nou-chi}

\title[Wong-Zakai approximations with convergence rate]{Wong-Zakai approximations with convergence rate for stochastic partial differential equations}
\author{Toshiyuki Nakayama \and Stefan Tappe}
\address{MUFG Bank, Ltd., Otemachi Financial City Grand Cube 20F 9-2, Otemachi 1-chome, Chiyoda-ku, Tokyo 100-0004, Japan}
\email{tnkym376736@ch.em-net.ne.jp}
\address{Albert Ludwig University of Freiburg, Department of Mathematical Stochastics, Ernst-Zermelo-Stra\ss{}e 1, D-79104 Freiburg, Germany}
\email{stefan.tappe@math.uni-freiburg.de}
\begin{abstract}
The goal of this paper is to prove a convergence rate for Wong-Zakai approximations of semilinear stochastic partial differential equations driven by a finite dimensional Brownian motion. Several examples, including the HJMM equation from mathematical finance, illustrate our result.
\end{abstract}
\keywords{Stochastic partial differential equation, Wong-Zakai approximation, convergence rate, graph norm}
\subjclass[2010]{60H15, 60G17}
\maketitle

\section{Introduction}

Consider a semilinear stochastic partial differential equation (SPDE) of the form
\begin{align}\label{SPDE-Wong-Zakai}
\left\{
\begin{array}{rcl}
dX(t) & = & \big( AX(t) + b(X(t)) \big) dt + \sum_{j=1}^r \sigma_j(X(t)) dB^j(t) \medskip
\\ X(0) & = & x_0
\end{array}
\right.
\end{align}
on a separable Hilbert space $(H,\| \cdot \|)$ driven by a finite dimensional Brownian motion $B = (B^1,\ldots,B^r)$ for some positive integer $r \in \bbn$.

A natural method in order to approximate the SPDE (\ref{SPDE-Wong-Zakai}) by a sequence of partial differential equations (PDEs) is to use the so-called Wong-Zakai approximations. More precisely, on a fixed time interval $[0,T]$ we replace the Brownian motions $B^j$ by their polygonal approximations $(B_m^j)_{m \in \mathbb{N}}$ with step size $\frac{T}{m}$. For each $m \in \mathbb{N}$ the Wong-Zakai approximation $\xi_m(\cdot) = \xi_m(\cdot,\omega) : [0,T] \rightarrow H$ is the mild solution to the deterministic PDE
\begin{align}\label{WZ-PDE-intro}
\left\{
\begin{array}{rcl}
\dot{\xi}_m(t) & = & A \xi_m(t) + b(\xi_m(t)) - \frac{1}{2} \sum_{j=1}^r D \sigma_j(\xi_m(t)) \sigma_j(\xi_m(t)) 
\\ &&+ \sum_{j=1}^r \sigma_j(\xi_m(t)) \dot{B}_m^j(t) \medskip
\\ \xi_m(0) & = & x_0
\end{array}
\right.
\end{align}
for each $\omega \in \Omega$. Under appropriate regularity conditions, the Wong-Zakai approximations $(\xi_m)_{m \in \mathbb{N}}$ converge to the solution $X$ of the SPDE (\ref{SPDE-Wong-Zakai}). More precisely, for every $p > 1$ we have the convergence
\begin{align}\label{WZ-intro}
\lim_{m \rightarrow \infty} \mathbb{E} \bigg[ \sup_{t \in [0,T]} \| \xi_m(t) - X(t) \|^{2p} \bigg] = 0,
\end{align}
see \cite[Thm. 2.1]{Nakayama-Support}.

Such a convergence result has first been proven, in the case of finite dimensional SDEs, by Wong and Zakai, see \cite{Wong-Zakai-1, Wong-Zakai-2}. Their approximation result has been generalized into several directions; namely, to the infinite dimensional case, e.g. in \cite{Ac-Terreni, Aida, Brzezniak-Carroll, Brzezniak-Flandoli, Ganguly, Gyongy, Gyongy-Stinga, Hairer, Hausenblas, Nakayama-Support, Tessitore} and \cite{Twardowska-1992}--\cite{Twardowska1-1996b}, with a view to support theorems, e.g. in \cite{Aida, Bally, Gyongy-Nualart-SS, Gyongy-Prohle, Millet-SS-1994, Nakayama-Support} (we also mention the related viability result from \cite{Nakayama}), with a view to the theory of rough paths, e.g. in \cite{Friz-Oberhauser}, for driving processes with jumps, e.g. in \cite{Hausenblas, Proppe}, and with a driving fractional Brownian motion, e.g. in \cite{Tudor-2009}.

However, there are only very few reference dealing with convergence rates for the Wong-Zakai approximations. In \cite{Gyongy} and \cite{Gyongy-Stinga} the authors consider the particular situation where the SPDE (\ref{SPDE-Wong-Zakai}) is a second-order SPDE of parabolic type, and in \cite{Hausenblas} it is assumed that the operator $A$ appearing in (\ref{SPDE-Wong-Zakai}) is the infinitesimal generator of a compact and analytic semigroup.

Our goal in the present paper is to establish a convergence rate for (\ref{WZ-intro}) without imposing restrictions on the generator $A$ appearing in (\ref{SPDE-Wong-Zakai}), that is, $A$ is allowed to be the infinitesimal generator of an arbitrary strongly continuous semigroup.

In order to present our main result, let us briefly outline the assumptions on the drift $b : H \to H$ and the volatilities $\sigma_1,\ldots,\sigma_r : H \to H$; the precise mathematical framework is stated in Section \ref{sec-framework}. First, we assume that these coefficients satisfy standard regularity conditions:

\begin{assumption}\label{ass-H}
We suppose that the following conditions are fulfilled:
\begin{enumerate}
\item The drift $b$ is Lipschitz continuous and bounded.

\item We have $\sigma_j \in C_b^2(H)$ for each $j = 1,\ldots,r$.
\end{enumerate}
\end{assumption}

Here $C_b^2(H)$ denotes the space of all $\sigma \in C^2(H)$ such that $\sigma$, $D \sigma$ and $D^2 \sigma$ are bounded. Then the volatilities $\sigma_1,\ldots,\sigma_r$ are Lipschitz continuous and bounded, and the mapping
\begin{align}\label{def-rho}
\rho : H \to H, \quad \rho(x) := \sum_{j=1}^r D \sigma_j(x) \sigma_j(x)
\end{align}
appearing in the PDE (\ref{WZ-PDE-intro}) is Lipschitz continuous and bounded, too, which ensures existence and uniqueness of mild solutions to the SPDE (\ref{SPDE-Wong-Zakai}) and the PDE (\ref{WZ-PDE-intro}). 

Furthermore, we assume that the conditions stated above are also fulfilled when we consider the coefficients as mappings on the domain $\cald(A)$ of the generator with respect to the graph norm
\begin{align}\label{graph-norm}
\| x \|_{\cald(A)} = \sqrt{\| x \|^2 + \| Ax \|^2}, \quad x \in \cald(A).
\end{align}
More precisely:

\begin{assumption}\label{ass-D}
We suppose that the following conditions are fulfilled:
\begin{enumerate}
\item We have $b(\cald(A)) \subset \cald(A)$ and $\sigma_j(\cald(A)) \subset \cald(A)$ for each $j = 1,\ldots,r$.

\item The drift $b|_{\cald(A)}$ is Lipschitz continuous and bounded with respect to the graph norm $\| \cdot \|_{\cald(A)}$.

\item We have $\sigma_j|_{\cald(A)} \in C_b^2(\cald(A))$ for each $j = 1,\ldots,r$ with respect to the graph norm $\| \cdot \|_{\cald(A)}$.
\end{enumerate}
\end{assumption}

Then our main result reads as follows:

\begin{theorem}\label{thm-main}
Suppose that Assumptions \ref{ass-H} and \ref{ass-D} are fulfilled, and let $T > 0$, $p > 1$ and $x_0 \in \cald(A)$ be arbitrary. Then there is a constant $C > 0$ such that for each $m \in \bbn$ we have
\begin{align*}
\bbe \bigg[ \sup_{t \in [0,T]} \| \xi_m(t) - X(t) \|^{2p} \bigg] \leq \frac{C}{m^{p-1}},
\end{align*}
where $X$ denotes the mild solution to the SPDE (\ref{SPDE-Wong-Zakai}) with $X(0) = x_0$, and the $(\xi_m)_{m \in \bbn}$ denote the mild solutions to the PDEs (\ref{WZ-PDE-intro}) with $\xi_m(0) = x_0$.
\end{theorem}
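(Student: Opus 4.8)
The plan is to work with the mild formulations of both equations and to reduce the problem, using the extra spatial regularity furnished by Assumption~\ref{ass-D}, to a quantitative analysis of the discrepancy between the polygonal (Wong--Zakai) integral and the It\^o integral together with its Stratonovich correction. Writing $h := T/m$, $t_k := kh$, and denoting by $(S(t))_{t \geq 0}$ the $C_0$-semigroup generated by $A$ and by $M := \sup_{t \in [0,T]} \| S(t) \|$ its bound, the mild solutions are
\[
X(t) = S(t) x_0 + \int_0^t S(t-s) b(X(s))\,ds + \sum_{j=1}^r \int_0^t S(t-s) \sigma_j(X(s))\,dB^j(s)
\]
and the analogous expression for $\xi_m$ with drift $b(\xi_m) - \tfrac{1}{2}\rho(\xi_m)$ and the stochastic term $\int_0^t S(t-s)\sigma_j(\xi_m(s)) \dot B_m^j(s)\,ds$. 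The first step is to record the preliminary estimates that, since $x_0 \in \cald(A)$ and the coefficients preserve $\cald(A)$ with graph-norm bounds, the equations (\ref{SPDE-Wong-Zakai}) and (\ref{WZ-PDE-intro}) are well posed on the Hilbert space $(\cald(A), \| \cdot \|_{\cald(A)})$, so that $X(t)$ and $\xi_m(t)$ are $\cald(A)$-valued and satisfy, for every $q \geq 1$,
\[
\sup_{m \in \bbn} \bbe \Big[ \sup_{t \in [0,T]} \| X(t) \|_{\cald(A)}^q + \sup_{t \in [0,T]} \| \xi_m(t) \|_{\cald(A)}^q \Big] < \infty.
\]
These uniform moment bounds are precisely what allow the unbounded operator $A$ to be treated quantitatively in the sequel.

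The decisive tool is the elementary estimate $\| (S(t-s) - S(t-t_k)) x \| \leq M (s-t_k) \| Ax \|$ for $x \in \cald(A)$ and $t_k \leq s \leq t$, which permits me to \emph{freeze} the semigroup over each subinterval $[t_k, t_{k+1})$ at the cost of an error of order $h$ multiplied by a graph-norm quantity that is controlled by the moment bounds above. Applying this to the stochastic and drift terms reduces $Y_m := \xi_m - X$ to three kinds of contributions: (i) Lipschitz feedback terms such as $\int_0^t S(t-s)(b(\xi_m) - b(X))\,ds$ and the corresponding $\sigma_j$-differences, absorbed by the Lipschitz bounds; (ii) the semigroup-freezing remainders, of order $h$ in the relevant norms; and (iii) the genuine Wong--Zakai discrepancy
\[
\sum_{j=1}^r \Big( \sum_k S(t-t_k) \sigma_j(\xi_m(t_k)) \Delta_k^j B - \int_0^t S(t-s) \sigma_j(X(s))\,dB^j(s) \Big) + \tfrac{1}{2} \int_0^t S(t-s) \rho(\xi_m(s))\,ds,
\]
where $\Delta_k^j B := B^j(t_{k+1}) - B^j(t_k)$.

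The heart of the proof, and the step I expect to be the main obstacle, is to show that contribution (iii) is of the desired order. The mechanism is the classical one: a first-order Taylor expansion of $\sigma_j(\xi_m(s))$ about $s = t_k$, together with the fact that on $[t_k, t_{k+1})$ the increment $\xi_m(s) - \xi_m(t_k)$ is driven to leading order by $\sigma_j(\xi_m(t_k)) \dot B_m^j(s)$, produces the block term $\tfrac{1}{2} \sum_k D\sigma_j(\xi_m(t_k)) \sigma_j(\xi_m(t_k)) (\Delta_k^j B)^2$, whose conditional expectation equals $\tfrac{1}{2} \rho(\xi_m(t_k)) h$ and therefore cancels the Stratonovich correction $\tfrac{1}{2} \int_0^t S(t-s) \rho(\xi_m)\,ds$ up to the martingale fluctuations $\tfrac{1}{2} \sum_k D\sigma_j \sigma_j ((\Delta_k^j B)^2 - h)$, the mixed terms $\sum_k \sum_{i \neq j} D\sigma_j \sigma_i \, \Delta_k^i B \, \Delta_k^j B$, and the discretization error between the It\^o sum $\sum_k \sigma_j(\xi_m(t_k)) \Delta_k^j B$ and the It\^o integral. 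Each residual is a discrete martingale (or is controlled by the moment bounds) whose bracket is a sum of $m$ increments of order $h^2$; estimating their $2p$-th moments via the Burkholder--Davis--Gundy inequality and carrying the $\cald(A)$-norm factors through Assumption~\ref{ass-D} produces the decay. The extra subtlety, which accounts for the stated exponent $m^{-(p-1)}$ rather than the naive $m^{-p}$, is that the semigroup weights $S(t-t_k)$ do not commute out of the supremum $\sup_{t \in [0,T]}$, so that the maximal bound over the $t$-indexed family of stochastic convolutions must be obtained by passing through the grid and controlling increments, which costs a factor of $m$.

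Finally, collecting the estimates for (i)--(iii), taking $\sup_{t \in [0,T]}$ and $2p$-th moments with the Burkholder--Davis--Gundy inequality for the martingale parts, I arrive at an inequality of the form
\[
u_m(t) \leq \frac{C}{m^{p-1}} + C \int_0^t u_m(s)\,ds, \qquad u_m(t) := \bbe \Big[ \sup_{r \in [0,t]} \| Y_m(r) \|^{2p} \Big],
\]
and Gronwall's lemma then yields the asserted bound $\bbe [ \sup_{t \in [0,T]} \| \xi_m(t) - X(t) \|^{2p} ] \leq C/m^{p-1}$. The two points requiring the most care are the uniform-in-$m$ graph-norm moment bounds for the approximations $\xi_m$, whose driving term $\dot B_m^j$ is large but whose contribution is tamed by the boundedness of $\sigma_j$, and the bookkeeping in (iii) ensuring that the Stratonovich correction annihilates exactly the non-vanishing part of the polygonal integral so that only terms of the correct order survive.
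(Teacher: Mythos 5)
Your overall strategy --- mild formulations, freezing the semigroup on each grid interval via the estimate $\|(S(t-s)-S(t-t_k))x\|\le C(s-t_k)\|x\|_{\cald(A)}$, a first-order Taylor expansion of $\sigma_j(\xi_m)$ producing the block terms $\tfrac12 D\sigma_j\sigma_j\,(\Delta_k^jB)^2$ that cancel the Stratonovich correction up to martingale residuals, and a final Gronwall argument --- is essentially the paper's, except that the paper routes the comparison through the (accelerated exponential) Euler--Maruyama scheme $Y_m$ and proves the rate for $\|Y_m-X\|$ (Theorem \ref{thm-Euler}) and $\|\xi_m-Y_m\|$ (Theorem \ref{thm-Euler-WZ}) separately; your direct one-step Gronwall on $\xi_m-X$ would serve equally well if all the residuals were controlled.

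There is, however, a genuine gap: the uniform-in-$m$ moment bound $\sup_{m}\bbe[\sup_{t\in[0,T]}\|\xi_m(t)\|_{\cald(A)}^q]<\infty$, on which you lean to control the semigroup-freezing remainders and the Taylor remainders, is not justified by the stated hypotheses and is almost certainly false. Estimating the $\cald(A)$-valued mild equation for $\xi_m$ using only the boundedness of $b$ and $\sigma_j$ in the graph norm gives $\|\xi_m(t)\|_{\cald(A)}\le C\big(1+\sum_j\int_0^T|\dot B_m^j(s)|\,ds\big)=C\big(1+\sum_j\sum_k|\Delta_k^jB|\big)$, and the total variation of the polygonal path diverges like $\sqrt m$; boundedness of $\sigma_j$ does not tame this, since no sign cancellation is available in a norm estimate. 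The paper accordingly proves only $\bbe[\sup_t\|\xi_m(t)\|_{\cald(A)}^{2q}]\le C\delta_m^{-(q+1)}$ (Corollary \ref{cor-E-B-dot}), a bound that blows up, and the whole point of Proposition \ref{prop-WZ-3} is to compensate for this blow-up: the factor $\delta_m^{2p\pi}$ gained from Lemma \ref{lemma-est-semigroup} beats the divergence $\delta_m^{-(p\pi+1)}$, and a H\"older splitting with conjugate exponents $\pi,\theta$ then yields the rate $\delta_m^{\kappa}$ for any $\kappa<p$ for the quadratic residual terms. Without either this compensation mechanism or an actual proof of your uniform bound (which would seem to require hypotheses one level higher, i.e.\ on $\cald(A^2)$), your treatment of the residuals in step (iii) does not close.
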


The proof of Theorem \ref{thm-main} will be a consequence of the following two results:
\begin{enumerate}
\item First, we will prove the stated convergence rate for the Euler-Maruyama approximations; see Theorem \ref{thm-Euler}.

\item Then, we will prove the stated convergence rate for the difference between the Euler-Maruyama approximations and the Wong-Zakai approximations; see Theorem \ref{thm-Euler-WZ}.
\end{enumerate}
For both steps, we will use and extend some results from \cite{Nakayama-Support}. 

The remainder of this text is organized as follows. In Section \ref{sec-framework} we introduce the mathematical framework and present some preliminary results. In Section \ref{sec-Euler} we provide the stated convergence rate for the Euler-Maruyama approximations, and in Section \ref{sec-Euler-WZ} we provide the stated convergence rate for the difference between the Euler-Maruyama approximations and the Wong-Zakai approximations. In the remaining sections we present examples of SPDEs where our main result (Theorem~\ref{thm-main}) applies. These are the HJMM equation from mathematical finance in Section \ref{sec-HJMM}, and two further examples arising from natural sciences in Section \ref{sec-natural}.

\section{General framework and notation}\label{sec-framework}

In this section, we introduce the mathematical framework and present some preliminary results. Let $(\Omega,\calf,(\calf_t)_{t \in \bbr_+},\bbp)$ be a filtered probability space satisfying the usual conditions. Let $B^1,\ldots,B^r$ be independent standard Brownian motions for some positive integer $r \in \bbn$. Let $H$ be a separable Hilbert space and let $(S_t)_{t \geq 0}$ be a $C_0$-semigroup on $H$ with infinitesimal generator $A : \cald(A) \subset H \to H$. Furthermore, let $b : H \to H$ and $\sigma_1,\ldots,\sigma_r : H \to H$ be measurable mappings.

\begin{lemma}\label{lemma-rho}
Suppose that $\sigma_1,\ldots,\sigma_r \in C_b^2(H)$. Then the mapping $\rho : H \to H$ defined in (\ref{def-rho}) is Lipschitz continuous and bounded.
\end{lemma}

\begin{proof}
By assumption, there exists a constant $C > 0$ such that
\begin{align*}
\max \{ \| \sigma_j(x) \|, \| D \sigma_j(x) \|, \| D^2 \sigma_j(x) \| \} \leq C \quad \text{for all $x \in H$ and $j = 1,\ldots,r$.}
\end{align*}
Therefore, for each $x \in H$ we obtain
\begin{align*}
\| \rho(x) \| \leq \sum_{j=1}^r \| D \sigma_j(x) \sigma_j(x) \| \leq \sum_{j=1}^r \| D \sigma_j(x) \| \, \| \sigma_j(x) \| \leq r C^2,
\end{align*}
proving that $\rho$ is bounded. Now, let $x_1,x_2 \in H$ be arbitrary. Then we have
\begin{align*}
&\| \rho(x_1) - \rho(x_2) \| \leq \sum_{j=1}^r \| D \sigma_j(x_1) \sigma_j(x_1) - D \sigma_j(x_2) \sigma_j(x_2) \|
\\ &\leq \sum_{j=1}^r \| D \sigma_j(x_1) \| \, \| \sigma_j(x_1) - \sigma_j(x_2) \| + \sum_{j=1}^r \| \sigma_j(x_2) \| \, \| D \sigma_j(x_1) - D \sigma_j(x_2) \|
\\ &\leq 2rC^2 \| x_1 - x_2 \|,
\end{align*}
showing that $\rho$ is Lipschitz continuous.
\end{proof}

We fix a finite time horizon $T > 0$, and define the quantities
\begin{align*}
\delta_m &:= \frac{T}{m}, \quad m \in \mathbb{N},
\\ [t]_m^- &:= k \delta_m, \quad k \delta_m \leq t < (k+1)\delta_m, \quad k = 0,\ldots,m-1,
\\ [t]_m^+ &:= (k+1) \delta_m, \quad k \delta_m \leq t < (k+1)\delta_m, \quad k = 0,\ldots,m-1,
\end{align*}
and the real-valued processes $(B_m^j(t))_{t \in [0,T]}$ for $m \in \mathbb{N}$ and $j=1\ldots,r$ as
\begin{align*}
B_m^j(t) := B^j([t]_m^-) + \frac{t - [t]_m^-}{\delta_m} (B^j([t]_m^+) - B^j([t]_m^-)), \quad t \in [0,T].
\end{align*}
Note that for all $m \in \bbn$ we have
\begin{align*}
[t]_m^- \leq t < [t]_m^+, \quad t \in [0,T],
\end{align*}
and that for all $m \in \bbn$ and all $j=1\ldots,r$ we have
\begin{align}\label{derivative}
\dot{B}_m^j(t) = \frac{B^j([t]_m^+) - B^j([t]_m^-)}{\delta_m}, \quad t \in [0,T].
\end{align}
For what follows, we suppose that Assumptions \ref{ass-H} and \ref{ass-D} are fulfilled, and consider the SPDE
\begin{align}\label{SPDE}
\left\{
\begin{array}{rcl}
dX(t) & = & \big( A X(t) + \hat{b}(X(t)) \big) dt + \sum_{j=1}^r \sigma_j(X(t)) dB^j(t) \medskip
\\ X(0) & = & x_0,
\end{array}
\right.
\end{align}
where the mapping $\hat{b} : H \to H$ is given by $\hat{b} := b + \frac{\rho}{2}$ with $\rho : H \to H$ being defined in (\ref{def-rho}), and for each $m \in \mathbb{N}$ we consider the Wong-Zakai approximation
$\xi_m(\cdot) = \xi_m(\cdot,\omega) : [0,T] \rightarrow H$ given by the PDE
\begin{align}\label{WZ-approx}
\left\{
\begin{array}{rcl}
\dot{\xi}_m(t) & = & A \xi_m(t) + b(\xi_m(t)) + \sum_{j=1}^r \sigma_j(\xi_m(t)) \dot{B}_m^j(t) \medskip
\\ \xi_m(0) & = & x_0
\end{array}
\right.
\end{align}
for each $\omega \in \Omega$.

\begin{remark}
From now on, we consider the SPDE (\ref{SPDE}) and the PDEs (\ref{WZ-approx}) instead of (\ref{SPDE-Wong-Zakai}) and (\ref{WZ-PDE-intro}). For our purposes, this is more convenient, as then we are directly in the framework of \cite[Sec. 2]{Nakayama-Support}, and it does not mean a restriction by virtue of Lemma \ref{lemma-rho}.
\end{remark}

For each $m \in \bbn$ we define the Euler-Maruyama approximation $Y_m$ inductively as follows. We set $Y_m(0) := x_0$, and, provided that $Y_m$ is defined on the interval $[0,k \delta_m]$ for some $k \in \{ 0,\ldots,m-1 \}$, we set
\begin{equation}\label{Euler-def}
\begin{aligned}
Y_m(t) &:= S_{t - k \delta_m} Y_m(k \delta_m) + \int_{k \delta_m}^t S_{t-s} \hat{b}(Y_m(k \delta_m)) ds
\\ &\quad + \sum_{j=1}^r \int_{k \delta_m}^t S_{t-s} \sigma_j(Y_m(k \delta_m)) dB^j(s), \quad t \in [k \delta_m,(k+1) \delta_m].
\end{aligned}
\end{equation}
Note that for $A = 0$ (that is $S_t = {\rm Id}$ for each $t \geq 0$) the processes $(Y_m)_{m \in \bbn}$ coincide with the well-known Euler-Maruyama approximations with step sizes $\delta_m$ for SDEs.

\begin{remark}
As pointed out in \cite{Bayer}, the naive implementation
\begin{equation}\label{Euler-naive}
\begin{aligned}
Y_m(t) &:= Y_m(k \delta_m) + \int_{k \delta_m}^t \big( A Y_m(k \delta_m) + \hat{b}(Y_m(k \delta_m)) \big) ds
\\ &\quad + \sum_{j=1}^r \int_{k \delta_m}^t \sigma_j(Y_m(k \delta_m)) dB^j(s), \quad t \in [k \delta_m,(k+1) \delta_m].
\end{aligned}
\end{equation}
of the Euler-Maruyama method does not work, because it might immediately lead to some $Y_m(k \delta_m) \notin \cald(A)$. Even in our situation, where we have a well-defined strong solution (see Proposition \ref{prop-ex-sol} below), there is no reason why the discrete approximation (\ref{Euler-naive}) should always stay in $\cald(A)$.
\end{remark}

\begin{lemma}\label{lemma-Euler-acc}
For each $m \in \bbn$ we have
\begin{equation}\label{Euler-formula}
\begin{aligned}
Y_m(t) &= S_t x_0 + \int_0^t S_{t-s} \hat{b}(Y_m([s]_m^-)) ds
\\ &\quad + \sum_{j=1}^r \int_0^t S_{t-s} \sigma_j(Y_m([s]_m^-)) dB^j(s), \quad t \in [0,T].
\end{aligned}
\end{equation}
\end{lemma}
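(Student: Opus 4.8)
The plan is to prove the accumulated formula (\ref{Euler-formula}) by induction on the discretization index $k$, unfolding the one-step recursion (\ref{Euler-def}) by means of the semigroup property. The induction hypothesis I would carry is that (\ref{Euler-formula}) holds for all $t \in [0,k\delta_m]$. For the base case $k=0$, observe that for $s \in [0,\delta_m)$ we have $[s]_m^- = 0$, so $Y_m([s]_m^-) = Y_m(0) = x_0$; hence (\ref{Euler-def}) with $k=0$ together with $S_0 = \mathrm{Id}$ is literally (\ref{Euler-formula}) restricted to $t \in [0,\delta_m]$.

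For the inductive step, I would fix $t \in [k\delta_m,(k+1)\delta_m]$ and start from the definition (\ref{Euler-def}), which expresses $Y_m(t)$ through $Y_m(k\delta_m)$ plus Bochner and It\^o integrals over $[k\delta_m,t]$. Evaluating the induction hypothesis at $t = k\delta_m$ gives a closed expression for $Y_m(k\delta_m)$ as $S_{k\delta_m} x_0$ plus integrals over $[0,k\delta_m]$. The key step is to substitute this expression into $S_{t-k\delta_m} Y_m(k\delta_m)$ and then push the bounded operator $S_{t-k\delta_m}$ through both integrals, using the semigroup identity $S_{t-k\delta_m} S_{k\delta_m - s} = S_{t-s}$ valid for $0 \le s \le k\delta_m$. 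This yields
\begin{align*}
S_{t-k\delta_m} Y_m(k\delta_m) = S_t x_0 + \int_0^{k\delta_m} S_{t-s}\hat{b}(Y_m([s]_m^-))\, ds + \sum_{j=1}^r \int_0^{k\delta_m} S_{t-s}\sigma_j(Y_m([s]_m^-))\, dB^j(s).
\end{align*}
It remains to recognize the contribution of $[k\delta_m,t]$: for $s$ in this range one has $[s]_m^- = k\delta_m$, so $Y_m([s]_m^-) = Y_m(k\delta_m)$, and the integrals over $[k\delta_m,t]$ already present in (\ref{Euler-def}) are exactly the missing pieces needed to extend the integration domain from $[0,k\delta_m]$ to $[0,t]$. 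Adding the two contributions gives (\ref{Euler-formula}) for all $t \in [k\delta_m,(k+1)\delta_m]$, completing the induction.

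The only nontrivial point, and the one I would treat carefully, is interchanging the bounded linear operator $S_{t-k\delta_m}$ with the Bochner integral and, more importantly, with the stochastic integral. For the deterministic integral this is immediate by linearity and continuity; for the It\^o integral it is the standard fact that a fixed bounded operator may be taken inside the stochastic integral, i.e. $S_{t-k\delta_m}\int_0^{k\delta_m} \Phi(s)\, dB^j(s) = \int_0^{k\delta_m} S_{t-k\delta_m}\Phi(s)\, dB^j(s)$ for square-integrable integrands $\Phi$. Everything else is a routine bookkeeping of the integration limits, and Assumptions \ref{ass-H}--\ref{ass-D} guarantee that all integrands are sufficiently regular for the integrals to be well defined.
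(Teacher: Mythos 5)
Your proposal is correct and follows essentially the same route as the paper's proof: induction on the discretization index, evaluating the formula at $t = k\delta_m$, substituting into (\ref{Euler-def}), and using the semigroup identity $S_{t-k\delta_m}S_{k\delta_m - s} = S_{t-s}$ together with the fact that $[s]_m^- = k\delta_m$ on $[k\delta_m,(k+1)\delta_m)$. Your explicit remark on commuting the bounded operator $S_{t-k\delta_m}$ with the stochastic integral is a point the paper leaves implicit, but it is the standard fact and does not change the argument.
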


\begin{proof}
We prove identity (\ref{Euler-formula}) inductively on each interval $[0,k \delta_m]$ for $k = 0,\ldots,m$. The identity (\ref{Euler-formula}) holds true for $k = 0$, because $Y_m(0) = x_0$. For the induction step $k \to k+1$ note that for $t = k \delta_m$ identity (\ref{Euler-formula}) yields
\begin{equation}\label{Euler-k}
\begin{aligned}
Y_m(k \delta_m) &= S_{k \delta_m} x_0 + \int_0^{k \delta_m} S_{k \delta_m-s} \hat{b}(Y_m([s]_m^-)) ds
\\ &\quad + \sum_{j=1}^r \int_0^{k \delta_m} S_{k \delta_m-s} \sigma_j(Y_m([s]_m^-)) dB^j(s).
\end{aligned}
\end{equation}
Therefore, by (\ref{Euler-def}) and (\ref{Euler-k}), and noting that
\begin{align*}
[s]_m^- = k \delta_m, \quad s \in [k \delta_m,(k+1) \delta_m), 
\end{align*}
for each $t \in [k \delta_m,(k+1) \delta_m]$ we obtain
\begin{align*}
Y_m(t) &= S_{t - k \delta_m} \bigg( S_{k \delta_m} x_0 + \int_0^{k \delta_m} S_{k \delta_m-s} \hat{b}(Y_m([s]_m^-)) ds
\\ &\quad + \sum_{j=1}^r \int_0^{k \delta_m} S_{k \delta_m-s} \sigma_j(Y_m([s]_m^-)) dB^j(s) \bigg)
\\ &\quad + \int_{k \delta_m}^t S_{t-s} \hat{b}(Y_m([s]_m^-)) ds
+ \sum_{j=1}^r \int_{k \delta_m}^t S_{t-s} \sigma_j(Y_m([s]_m^-)) dB^j(s),
\end{align*}
proving (\ref{Euler-formula}).
\end{proof}

\begin{remark}
With the terminology from \cite{Mild-Ito}, the Euler-Maruyama approximations (\ref{Euler-formula}) are so-called \emph{accelerated exponential Euler approximations}, whereas the so-called \emph{exponential Euler approximations} would be given by
\begin{align*}
Y_m(t) &= S_t x_0 + \int_0^t S_{t-[s]_m^-} \hat{b}(Y_m([s]_m^-)) ds
\\ &\quad + \sum_{j=1}^r \int_0^t S_{t-[s]_m^-} \sigma_j(Y_m([s]_m^-)) dB^j(s), \quad t \in [0,T].
\end{align*}
\end{remark}

The domain $\cald(A)$ equipped with the graph norm (\ref{graph-norm}) is a separable Hilbert space, too, and the restriction $(S_t|_{\cald(A)})_{t \geq 0}$ is a $C_0$-semigroup on $(\cald(A),\| \cdot \|_{\cald(A)})$ with infinitesimal generator $A$ on the domain $\cald(A^2)$. In the upcoming results, the notation $(\cald(A) \, \text{--} ) \int_0^t$ indicates that we consider the respective integral on the state space $(\cald(A),\| \cdot \|_{\cald(A)})$; see, for example, the right-hand sides of (\ref{int-Bochner-3}) and (\ref{int-Bochner-3b}). Otherwise, the integral is considered on the state space $(H,\| \cdot \|)$, as usual; see, for example, the left-hand sides of (\ref{int-Bochner-3}) and (\ref{int-Bochner-3b}).

\begin{lemma}\label{lemma-integrals-H-D-1}
Let $\Phi : [0,T] \to \cald(A)$ be a function such that
\begin{align}\label{int-Bochner-1}
\int_0^T \| \Phi(s) \|_{\cald(A)} ds < \infty.
\end{align}
Then the following statements are true:
\begin{enumerate}
\item We have
\begin{align}\label{int-Bochner-2}
\int_0^T \| \Phi(s) \| ds < \infty.
\end{align}
\item For each $t \in [0,T]$ we have
\begin{align}\label{int-Bochner-3}
\int_0^t \Phi(s) ds = \big( \cald(A) \, \text{--} \big) \int_0^t \Phi(s) ds.
\end{align}
\end{enumerate}
\end{lemma}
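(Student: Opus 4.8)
The plan is to handle the two assertions separately, the first being immediate and the second resting on the elementary fact that a bounded linear operator can be pulled out of a Bochner integral. Throughout I take for granted that $\Phi$ is strongly measurable as a $\cald(A)$-valued map, which is implicit in the hypothesis that $s \mapsto \| \Phi(s) \|_{\cald(A)}$ be integrable and which guarantees that the $\cald(A)$-valued Bochner integral on the right-hand side of (\ref{int-Bochner-3}) exists.

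For part (1), I would only observe that the graph norm dominates the original norm: for every $x \in \cald(A)$ we have $\| x \|^2 \leq \| x \|^2 + \| Ax \|^2 = \| x \|_{\cald(A)}^2$, hence $\| x \| \leq \| x \|_{\cald(A)}$. Applying this pointwise to $\Phi(s)$ and integrating yields, by (\ref{int-Bochner-1}), the estimate $\int_0^T \| \Phi(s) \| \, ds \leq \int_0^T \| \Phi(s) \|_{\cald(A)} \, ds < \infty$, which is (\ref{int-Bochner-2}).

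For part (2), the key point is that the canonical inclusion $\iota : (\cald(A), \| \cdot \|_{\cald(A)}) \to (H, \| \cdot \|)$ is a bounded linear operator with $\| \iota \| \leq 1$, again because $\| \cdot \| \leq \| \cdot \|_{\cald(A)}$. Since $\Phi$ is Bochner integrable into $\cald(A)$, composing with the continuous map $\iota$ preserves strong measurability, so that together with part (1) the map $\Phi = \iota \circ \Phi$ is Bochner integrable into $H$ and the left-hand side of (\ref{int-Bochner-3}) is well-defined. It then remains to invoke the standard commutation property, which gives $\iota\big( \big( \cald(A) \, \text{--} \big) \int_0^t \Phi(s) \, ds \big) = \int_0^t \iota(\Phi(s)) \, ds$; since $\iota$ is merely the set-theoretic inclusion, this is exactly (\ref{int-Bochner-3}).

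If one prefers a self-contained argument rather than citing the commutation property, I would establish it by the usual approximation: choose $\cald(A)$-valued simple functions $(\Phi_n)$ with $\int_0^T \| \Phi_n(s) - \Phi(s) \|_{\cald(A)} \, ds \to 0$; for simple functions (\ref{int-Bochner-3}) holds trivially by linearity of $\iota$; and the passage to the limit is legitimate because the bound $\int_0^T \| \Phi_n(s) - \Phi(s) \| \, ds \leq \int_0^T \| \Phi_n(s) - \Phi(s) \|_{\cald(A)} \, ds \to 0$ controls convergence of the $H$-valued integrals while the $\cald(A)$-valued integrals converge by construction. I do not expect any genuine obstacle here; the statement is essentially bookkeeping, ensuring that the two a priori distinct Bochner integrals coincide, and the only delicate point is the underlying strong measurability, which I am treating as part of the standing hypothesis.
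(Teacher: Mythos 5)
Your proposal is correct and follows essentially the same route as the paper: part (1) is the pointwise domination $\| \cdot \| \leq \| \cdot \|_{\cald(A)}$, and part (2) is the simple-function approximation with dominated convergence in both norms, which is exactly the paper's argument (your framing via the bounded inclusion $\iota$ and the commutation of bounded operators with Bochner integrals is just a cleaner packaging of the same computation). No gaps.
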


\begin{proof}
Relation (\ref{int-Bochner-2}) is an immediate consequence of (\ref{int-Bochner-1}). There is a sequence $(\Phi^n)_{n \in \bbn}$ of simple functions $\Phi^n : [0,T] \to \cald(A)$ such that $\| \Phi - \Phi^n \|_{\cald(A)} \leq \| \Phi \|_{\cald(A)}$ for each $n \in \bbn$, and we have $\| \Phi - \Phi^n \|_{\cald(A)} \to 0$ for $n \to \infty$. Let $t \in [0,T]$ be arbitrary. By Lebesgue's dominated convergence theorem we obtain
\begin{align*}
\bigg\| \big( \cald(A) \, \text{--} \big) \int_0^t \Phi(s) ds - \big( \cald(A) \, \text{--} \big) \int_0^t \Phi^n(s) ds \bigg\|_{\cald(A)} \to 0 \quad \text{as $n \to \infty$.}
\end{align*}
We have $\| \Phi - \Phi^n \| \leq \| \Phi \|_{\cald(A)}$ for each $n \in \bbn$, and $\| \Phi - \Phi^n \| \to 0$ for $n \to \infty$. Therefore, by Lebesgue's dominated convergence theorem we also have
\begin{align*}
\bigg\| \int_0^t \Phi(s) ds - \int_0^t \Phi^n(s) ds \bigg\| \to 0 \quad \text{as $n \to \infty$.}
\end{align*}
Noting that
\begin{align*}
\int_0^t \Phi^n(s) ds = \big( \cald(A) \, \text{--} \big) \int_0^t \Phi^n(s) ds \quad \text{for each $n \in \bbn$,}
\end{align*}
we arrive at (\ref{int-Bochner-3}).
\end{proof}

\begin{lemma}\label{lemma-integrals-H-D-2}
Let $\Psi$ be a $\cald(A)$-valued predictable process such that $\bbp$-almost surely
\begin{align}\label{int-Bochner-1b}
\int_0^T \| \Psi(s) \|_{\cald(A)}^2 ds < \infty.
\end{align}
Then the following statements are true:
\begin{enumerate}
\item We have
\begin{align}\label{int-Bochner-2b}
\int_0^T \| \Psi(s) \|^2 ds < \infty.
\end{align}
\item For each $t \in [0,T]$ and each $j = 1,\ldots,r$ we have
\begin{align}\label{int-Bochner-3b}
\int_0^t \Psi(s) dB^j(s) = \big( \cald(A) \, \text{--} \big) \int_0^t \Psi(s) dB^j(s).
\end{align}
\end{enumerate}
\end{lemma}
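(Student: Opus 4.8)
The plan is to mirror the proof of Lemma~\ref{lemma-integrals-H-D-1}, replacing the Bochner integral by the Hilbert-space stochastic integral and the dominated convergence argument by the continuity of stochastic integration. Part (1) is immediate: since $\|x\|^2 \le \|x\|^2 + \|Ax\|^2 = \|x\|_{\mathcal{D}(A)}^2$ for all $x \in \mathcal{D}(A)$, we have $\|\Psi(s)\|^2 \le \|\Psi(s)\|_{\mathcal{D}(A)}^2$ pathwise, and (\ref{int-Bochner-2b}) follows from (\ref{int-Bochner-1b}).

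For part (2), I would first record that $(\mathcal{D}(A),\|\cdot\|_{\mathcal{D}(A)})$ is a separable Hilbert space, so that under (\ref{int-Bochner-1b}) the right-hand side of (\ref{int-Bochner-3b}) is a well-defined $\mathcal{D}(A)$-valued stochastic integral, constructed by the usual localization and approximation procedure; moreover, the inclusion $\iota : \mathcal{D}(A) \hookrightarrow H$ is a bounded linear operator with $\|\iota\| \le 1$ by the estimate above. The plan is then to choose a sequence $(\Psi^n)_{n \in \mathbb{N}}$ of elementary $\mathcal{D}(A)$-valued predictable processes with $\|\Psi - \Psi^n\|_{\mathcal{D}(A)} \le \|\Psi\|_{\mathcal{D}(A)}$ for each $n$ and $\|\Psi - \Psi^n\|_{\mathcal{D}(A)} \to 0$ pointwise. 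By (\ref{int-Bochner-1b}) and Lebesgue's dominated convergence theorem we then have $\int_0^T \|\Psi - \Psi^n\|_{\mathcal{D}(A)}^2 \, ds \to 0$ $\mathbb{P}$-almost surely, hence in probability, and a fortiori $\int_0^T \|\Psi - \Psi^n\|^2 \, ds \to 0$ in probability by part (1).

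For an elementary integrand the two stochastic integrals agree trivially: both sides of (\ref{int-Bochner-3b}) reduce to the same finite sum $\sum_i \Psi(t_i)(B^j(t_{i+1} \wedge t) - B^j(t_i \wedge t))$, a single vector of $\mathcal{D}(A) \subset H$ irrespective of the ambient norm; equivalently, $\iota$ commutes with the stochastic integral of elementary processes. To pass to the limit I would invoke the standard continuity property of the stochastic integral, namely that $\int_0^T \|\Phi^n\|^2 \, ds \to 0$ in probability implies $\int_0^t \Phi^n \, dB^j \to 0$ in probability, which follows from the It\^o isometry combined with a localization estimate. This yields that the $H$-valued integrals $\int_0^t \Psi^n \, dB^j$ converge in probability to the left-hand side of (\ref{int-Bochner-3b}), and simultaneously the $\mathcal{D}(A)$-valued integrals converge in probability, in $(\mathcal{D}(A),\|\cdot\|_{\mathcal{D}(A)})$, to its right-hand side. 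Since $\iota$ is continuous, the latter convergence also holds in $H$; comparing the two limits along a common almost surely convergent subsequence then gives the identity (\ref{int-Bochner-3b}).

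The main obstacle is purely technical: the hypothesis (\ref{int-Bochner-1b}) only provides square-integrability $\mathbb{P}$-almost surely rather than in expectation, so the na\"ive It\^o isometry is not directly available. One must either work throughout with convergence in probability as above, or localize by the stopping times $\tau_N := \inf\{ t : \int_0^t \|\Psi(s)\|_{\mathcal{D}(A)}^2 \, ds \ge N \} \wedge T$ to reduce to genuinely square-integrable integrands. In the latter route, once the identity is established for each stopped process $\Psi \mathbbm{1}_{[0,\tau_N]}$, letting $N \to \infty$ — noting that $\tau_N = T$ eventually, $\mathbb{P}$-almost surely, by (\ref{int-Bochner-1b}) — removes the localization and completes the proof.
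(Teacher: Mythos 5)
Your proposal is correct and follows essentially the same route as the paper, which simply states that the proof is ``similar to that of Lemma~\ref{lemma-integrals-H-D-1}'' and omits it: you approximate by elementary predictable processes, observe that the two stochastic integrals coincide for elementary integrands, and pass to the limit. Your explicit handling of the fact that (\ref{int-Bochner-1b}) only holds $\bbp$-almost surely --- working with convergence in probability or localizing by stopping times --- is exactly the adjustment needed to carry the deterministic dominated-convergence argument over to the stochastic integral.
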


\begin{proof}
The proof is similar to that of Lemma \ref{lemma-integrals-H-D-1}, and therefore omitted.
\end{proof}

The following three results show that for each starting point $x_0 \in \cald(A)$ the mild solution $X$ to the SPDE (\ref{SPDE}) with $X(0) = x_0$, the  Wong-Zakai approximations $(\xi_m)_{m \in \bbn}$ given by the PDEs (\ref{WZ-approx}) with $\xi_m(0) = x_0$ and the Euler-Maruyama approximations $(Y_m)_{m \in \bbn}$ given by $Y_m(0) = x_0$ and (\ref{Euler-def}) take their values in $\cald(A)$.

\begin{proposition}\label{prop-ex-sol}
For each $x_0 \in \cald(A)$ there exists a unique mild solution $X$ to the SPDE (\ref{SPDE}) on the state space $(\cald(A),\| \cdot \|_{\cald(A)})$, and it is a strong solution to the SPDE (\ref{SPDE}) on the state space $H$.
\end{proposition}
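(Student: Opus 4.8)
The plan is to solve the equation first on the smaller state space $(\cald(A),\|\cdot\|_{\cald(A)})$, and then to transfer the resulting process back to $H$ and upgrade it from a mild to a strong solution.

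First I would exploit the structure recalled above the statement: $(\cald(A),\|\cdot\|_{\cald(A)})$ is itself a separable Hilbert space, and the restriction $(S_t|_{\cald(A)})_{t \geq 0}$ is a $C_0$-semigroup on it with generator $A$ on $\cald(A^2)$. By Assumption \ref{ass-D} the coefficients map $\cald(A)$ into $\cald(A)$, and $b|_{\cald(A)}$ is Lipschitz continuous and bounded while $\sigma_j|_{\cald(A)} \in C_b^2(\cald(A))$, all with respect to the graph norm. Applying the argument of Lemma \ref{lemma-rho} verbatim on the Hilbert space $(\cald(A),\|\cdot\|_{\cald(A)})$ shows that the corresponding map $\rho|_{\cald(A)}$, and hence $\hat b|_{\cald(A)} = b|_{\cald(A)} + \tfrac{1}{2}\rho|_{\cald(A)}$, is Lipschitz continuous and bounded in the graph norm. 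Thus the coefficients of the SPDE (\ref{SPDE}), regarded on $(\cald(A),\|\cdot\|_{\cald(A)})$, satisfy the hypotheses of the standard existence and uniqueness theory for semilinear SPDEs with Lipschitz nonlinearities; here we are directly in the framework of \cite{Nakayama-Support}. This yields a unique mild solution $X$ on $(\cald(A),\|\cdot\|_{\cald(A)})$ with continuous paths in the graph norm; in particular $X(t) \in \cald(A)$ for all $t$, and
\begin{align*}
X(t) &= S_t x_0 + \big(\cald(A)\,\text{--}\big)\int_0^t S_{t-s}\hat b(X(s))\,ds \\
&\quad + \sum_{j=1}^r \big(\cald(A)\,\text{--}\big)\int_0^t S_{t-s}\sigma_j(X(s))\,dB^j(s).
\end{align*}

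Next I would transfer this identity to $H$. Since $X$ has graph-norm-continuous paths and the restricted coefficients are bounded, the integrands $s \mapsto S_{t-s}\hat b(X(s))$ and $s \mapsto S_{t-s}\sigma_j(X(s))$ are $\cald(A)$-valued with bounded graph norm on $[0,T]$, so the integrability hypotheses (\ref{int-Bochner-1}) and (\ref{int-Bochner-1b}) are satisfied pathwise. Lemmas \ref{lemma-integrals-H-D-1} and \ref{lemma-integrals-H-D-2} then identify the $\cald(A)$-valued integrals with the corresponding $H$-valued ones, so that $X$, viewed in $H$, is a mild solution of (\ref{SPDE}) on the state space $H$.

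Finally, I would upgrade $X$ to a strong solution on $H$. The relevant facts are that $x_0 \in \cald(A)$, that $X(s) \in \cald(A)$ with $s \mapsto AX(s)$ continuous in $H$ (hence $\int_0^T \|AX(s)\|\,ds < \infty$ almost surely), and that $\hat b(X(s))$ and $\sigma_j(X(s))$ lie in $\cald(A)$ with $\|A\hat b(X(s))\|$ and $\|A\sigma_j(X(s))\|$ bounded by the graph-norm bounds on the restricted coefficients. Under these conditions the classical criterion converting a mild solution into a strong one applies and gives
\begin{align*}
X(t) = x_0 + \int_0^t \big(AX(s) + \hat b(X(s))\big)\,ds + \sum_{j=1}^r \int_0^t \sigma_j(X(s))\,dB^j(s), \quad t \in [0,T].
\end{align*}
I expect this last step to be the main obstacle: establishing it amounts to interchanging the closed operator $A$ with the deterministic and, above all, the stochastic convolution integrals. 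The deterministic part follows from the identity $S_t g - g = A\int_0^t S_s g\,ds$ for $g \in \cald(A)$ together with a Fubini argument, whereas the stochastic part requires a stochastic Fubini theorem combined with the closedness of $A$ and the just-established integrability of $s \mapsto A\sigma_j(X(s))$. Uniqueness of the strong solution on $H$ is then inherited from the uniqueness of the mild solution already obtained on $(\cald(A),\|\cdot\|_{\cald(A)})$.
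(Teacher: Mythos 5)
Your proposal follows essentially the same route as the paper: obtain the unique mild solution on the Hilbert space $(\cald(A),\|\cdot\|_{\cald(A)})$ from the standard Lipschitz theory via Assumption \ref{ass-D}, identify the $\cald(A)$-valued integrals with the $H$-valued ones using Lemmas \ref{lemma-integrals-H-D-1} and \ref{lemma-integrals-H-D-2}, and then upgrade to a strong solution on $H$ from the pathwise bound $\int_0^t \|AX(s)\|\,ds \leq \int_0^t \|X(s)\|_{\cald(A)}\,ds < \infty$. Your additional remarks on the mild-to-strong criterion (closedness of $A$, stochastic Fubini, integrability of $A\sigma_j(X(s))$) only spell out the standard result the paper invokes implicitly, so the argument is correct and matches the paper's proof.
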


\begin{proof}
By a standard result (see, for example, \cite[Thm. 7.2]{Da_Prato}), there is a a unique mild solution $X$ to the SPDE (\ref{SPDE}) on the state space $(\cald(A),\| \cdot \|_{\cald(A)})$; that is, a continuous adapted process such that $\bbp$-almost surely
\begin{align*}
X(t) &= S_t x_0 + \big( \cald(A) \, \text{--} \big) \int_0^t S_{t-s} \hat{b}(X(s)) ds
\\ &\quad + \sum_{j=1}^r \big( \cald(A) \, \text{--} \big) \int_0^t S_{t-s} \sigma_j(X(s)) d B^j(s), \quad t \in \bbr_+.
\end{align*}
This implies that $X$ is also continuous in $H$, and by Lemmas \ref{lemma-integrals-H-D-1} and \ref{lemma-integrals-H-D-2} we obtain $\bbp$-almost surely
\begin{align*}
X(t) &= S_t x_0 + \int_0^t S_{t-s} \hat{b}(X(s)) ds
\\ &\quad + \sum_{j=1}^r \int_0^t S_{t-s} \sigma_j(X(s)) d B^j(s), \quad t \in \bbr_+,
\end{align*}
showing that $X$ is also a mild solution to the SPDE (\ref{SPDE}) on the state space $H$. Furthermore, since $X$ is continuous in $(\cald(A),\| \cdot \|_{\cald(A)})$ we have $\bbp$-almost surely
\begin{align*}
\int_0^t \| A X(s) \| ds \leq \int_0^t \| X(s) \|_{\cald(A)} ds < \infty \quad \text{for each $t \in \bbr_+$.}
\end{align*}
Therefore $X$ is also a strong solution to the SPDE (\ref{SPDE}) on the state space $H$.
\end{proof}

\begin{proposition}\label{prop-ex-sol-WZ}
For each $x_0 \in \cald(A)$ and each $m \in \bbn$ there exists a unique mild solution $\xi_m$ to the PDE (\ref{WZ-approx}) on the state space $(\cald(A),\| \cdot \|_{\cald(A)})$, and it is a strong solution to the PDE (\ref{WZ-approx}) on the state space $H$. Moreover, for each $k \in \{ 0,\ldots,m-1 \}$ we have
\begin{align*}
\xi_m(t) &= S_{t - k \delta_m} \xi_m(k \delta_m) + \int_{k \delta_m}^t S_{t-s} b(\xi_m(s)) ds
\\ &\quad + \sum_{j=1}^r \int_{k \delta_m}^t S_{t-s} \sigma_j(\xi_m(s)) \dot{B}_m^j(s) ds, \quad t \in [k \delta_m,(k+1) \delta_m].
\end{align*}
\end{proposition}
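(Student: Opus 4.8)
The plan is to exploit that, by (\ref{derivative}), each derivative $\dot{B}_m^j$ is piecewise constant in time: on every interval $[k \delta_m,(k+1)\delta_m)$ it equals the fixed value $\frac{B^j((k+1)\delta_m) - B^j(k \delta_m)}{\delta_m}$. Hence, for each fixed $\omega \in \Omega$ and each $k \in \{0,\ldots,m-1\}$, the PDE (\ref{WZ-approx}) restricted to this interval is a deterministic semilinear Cauchy problem $\dot{\xi} = A\xi + F_k(\xi)$ with the \emph{time-independent} drift $F_k(x) := b(x) + \sum_{j=1}^r \dot{B}_m^j \, \sigma_j(x)$, where $\dot{B}_m^j$ now denotes its constant value on the subinterval. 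By Assumption \ref{ass-H} the map $F_k$ is Lipschitz continuous and bounded on $H$, and by Assumption \ref{ass-D} (using that $\sigma_j|_{\cald(A)} \in C_b^2(\cald(A))$ is Lipschitz and bounded in the graph norm, and that $b,\sigma_j$ map $\cald(A)$ into $\cald(A)$) it maps $\cald(A)$ into $\cald(A)$ and is Lipschitz continuous and bounded with respect to $\| \cdot \|_{\cald(A)}$. Note that for each fixed $\omega$ the constant $\dot{B}_m^j$ is finite, so no uniformity in $\omega$ is needed at this stage.

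First I would establish unique existence on $(\cald(A),\|\cdot\|_{\cald(A)})$ by concatenation. On $[0,\delta_m]$ the standard existence and uniqueness result for mild solutions of semilinear equations with Lipschitz and bounded coefficients---the zero-noise special case of the result used in Proposition \ref{prop-ex-sol} (see \cite[Thm. 7.2]{Da_Prato})---yields a unique continuous $\cald(A)$-valued mild solution. Using its endpoint $\xi_m(\delta_m) \in \cald(A)$ as the new initial value and iterating over $k = 1,\ldots,m-1$ produces a unique continuous mild solution $\xi_m$ on all of $[0,T]$; piecing the subinterval representations together gives the global identity $\xi_m(t) = S_t x_0 + (\cald(A)\,\text{--})\int_0^t S_{t-s}\big[ b(\xi_m(s)) + \sum_{j=1}^r \sigma_j(\xi_m(s)) \dot{B}_m^j(s) \big]\, ds$ on $(\cald(A),\|\cdot\|_{\cald(A)})$.

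Next, that $\xi_m$ is a strong solution on $H$ follows exactly as in the proof of Proposition \ref{prop-ex-sol}: continuity of $\xi_m$ in the graph norm gives $\int_0^t \| A\xi_m(s)\|\, ds \le \int_0^t \| \xi_m(s) \|_{\cald(A)}\, ds < \infty$, and Lemma \ref{lemma-integrals-H-D-1} identifies the $\cald(A)$-valued Bochner integral above with the corresponding $H$-valued one, so that $\xi_m$ is first a mild and then a strong solution on $H$. The subinterval formula is then read off from the global representation by splitting $\int_0^t = \int_0^{k\delta_m} + \int_{k\delta_m}^t$ for $t \in [k\delta_m,(k+1)\delta_m]$ and applying the semigroup property $S_{t-s} = S_{t-k\delta_m} S_{k\delta_m - s}$ on $[0,k\delta_m]$ together with $S_t x_0 = S_{t-k\delta_m} S_{k\delta_m} x_0$; factoring out the bounded operator $S_{t-k\delta_m}$ reconstitutes precisely the representation of $\xi_m(k\delta_m)$, leaving the two integrals over $[k\delta_m,t]$. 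This is the reverse of the computation carried out in Lemma \ref{lemma-Euler-acc}.

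The main obstacle is the first step: securing $\cald(A)$-invariance and the graph-norm Lipschitz continuity and boundedness of the drift, which is exactly the content of Assumption \ref{ass-D}. Everything else is bookkeeping with the semigroup property and the integral identifications from Lemma \ref{lemma-integrals-H-D-1}; in particular, since the proposition makes no adaptedness claim (and indeed the polygonal paths look ahead in time), the purely pathwise construction for each fixed $\omega$ suffices.
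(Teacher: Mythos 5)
Your argument is correct and follows exactly the route the paper indicates for its (omitted) proof: existence, uniqueness and the strong-solution property are obtained as in Proposition \ref{prop-ex-sol} (using Lemmas \ref{lemma-integrals-H-D-1} and \ref{lemma-integrals-H-D-2} and Assumption \ref{ass-D} for the graph-norm Lipschitz bounds on the pathwise drift), and the subinterval representation is obtained by the same semigroup-splitting computation as in Lemma \ref{lemma-Euler-acc}. Your additional observations --- that $\dot{B}_m^j$ is piecewise constant so the construction is purely pathwise per $\omega$, and that no adaptedness is claimed --- are accurate and fill in the details the paper leaves implicit.
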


\begin{proof}
The proof is similar to that of Proposition \ref{prop-ex-sol} and Lemma \ref{lemma-Euler-acc}, and therefore omitted.
\end{proof}

\begin{proposition}
For each $x_0 \in \cald(A)$ and each $m \in \bbn$ we have $Y_m \in \cald(A)$.
\end{proposition}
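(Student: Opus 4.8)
The plan is to argue by induction on $k \in \{0,\ldots,m\}$ that $Y_m(t) \in \cald(A)$ for all $t \in [0,k\delta_m]$, working directly from the inductive definition (\ref{Euler-def}). The base case $k = 0$ is immediate, since $Y_m(0) = x_0 \in \cald(A)$ by hypothesis.

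For the induction step, suppose $Y_m(k\delta_m) \in \cald(A)$; I would then show that each of the three terms on the right-hand side of (\ref{Euler-def}) lies in $\cald(A)$ for every $t \in [k\delta_m,(k+1)\delta_m]$. The first term $S_{t-k\delta_m} Y_m(k\delta_m)$ is in $\cald(A)$ because $\cald(A)$ is invariant under the semigroup $(S_t)_{t \geq 0}$. For the drift term, I would first observe that $\hat{b}(\cald(A)) \subset \cald(A)$: indeed $b(\cald(A)) \subset \cald(A)$ by Assumption \ref{ass-D}, and the map $\rho$ from (\ref{def-rho}) also satisfies $\rho(\cald(A)) \subset \cald(A)$, since $\sigma_j(\cald(A)) \subset \cald(A)$ and $\sigma_j|_{\cald(A)} \in C_b^2(\cald(A))$ together imply that $D\sigma_j(x)\sigma_j(x) \in \cald(A)$ for $x \in \cald(A)$. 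Hence $\hat{b}(Y_m(k\delta_m)) \in \cald(A)$ is a fixed element, the integrand $s \mapsto S_{t-s}\hat{b}(Y_m(k\delta_m))$ is $\cald(A)$-valued, and $\int_{k\delta_m}^t \| S_{t-s}\hat{b}(Y_m(k\delta_m)) \|_{\cald(A)}\,ds < \infty$, because $(S_t|_{\cald(A)})_{t\geq 0}$ is a $C_0$-semigroup on $\cald(A)$ and is therefore bounded on compact intervals. Lemma \ref{lemma-integrals-H-D-1} then shows that the $H$-valued Bochner integral coincides with the $\cald(A)$-valued one, so this term belongs to $\cald(A)$. The stochastic integral terms are treated analogously: $\sigma_j(Y_m(k\delta_m)) \in \cald(A)$ by Assumption \ref{ass-D}, the integrand $s \mapsto S_{t-s}\sigma_j(Y_m(k\delta_m))$ is a $\cald(A)$-valued predictable process satisfying the square-integrability condition (\ref{int-Bochner-1b}), and Lemma \ref{lemma-integrals-H-D-2} identifies the $H$-valued stochastic integral with its $\cald(A)$-valued counterpart. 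Summing over $j$ and adding the three terms yields $Y_m(t) \in \cald(A)$, completing the induction.

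The routine parts are the semigroup invariance and the boundedness estimates on compact intervals. The step requiring the most care is verifying $\rho(\cald(A)) \subset \cald(A)$, which is not stated outright in Assumption \ref{ass-D} and must be extracted from the $C_b^2(\cald(A))$ regularity of the $\sigma_j$ restricted to the graph-norm space, together with the careful bookkeeping that allows Lemmas \ref{lemma-integrals-H-D-1} and \ref{lemma-integrals-H-D-2} to transfer each integral from $H$ to $\cald(A)$, so that the whole expression remains in the domain term by term.
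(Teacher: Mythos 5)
Your argument is correct and follows exactly the route the paper takes: the paper's own proof is a one-line appeal to Lemmas \ref{lemma-integrals-H-D-1} and \ref{lemma-integrals-H-D-2} applied to the inductive definition (\ref{Euler-def}), which is precisely what you carry out in detail. Your additional observation that $\rho(\cald(A)) \subset \cald(A)$ must be extracted from Assumption \ref{ass-D} (so that $\hat{b}$ maps $\cald(A)$ into itself) is a genuine point that the paper leaves implicit, and you handle it correctly.
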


\begin{proof}
Noting that $Y_m(0) = x_0$ and (\ref{Euler-def}), this is an immediate consequence of Lemmas \ref{lemma-integrals-H-D-1} and \ref{lemma-integrals-H-D-2}.
\end{proof}

\begin{lemma}\label{lemma-convolution}
Let $\Psi$ be an $H$-valued predictable process, and let $p > 1$ be such that
\begin{align*}
\bbe \bigg[ \int_0^T \| \Psi(s) \|^{2p} ds \bigg] < \infty.
\end{align*}
Then there is a constant $C > 0$ such that for each $j=1,\ldots,r$ we have
\begin{align*}
\bbe \Bigg[ \sup_{t \in [0,T]} \bigg\| \int_0^T S_{t-s} \Psi(s) dB^j(s) \bigg\|^{2p} \Bigg] \leq C \, \bbe \bigg[ \int_0^T \| \Psi(s) \|^{2p} ds \bigg].
\end{align*}
\end{lemma}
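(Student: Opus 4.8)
The plan is to use the factorization method of Da Prato, Kwapie\'n and Zabczyk. Fix an exponent $\alpha \in \big( \frac{1}{2p}, \frac{1}{2} \big)$; this interval is nonempty precisely because $p > 1$. Since $(S_t)_{t \geq 0}$ is a $C_0$-semigroup, the constant $M := \sup_{t \in [0,T]} \| S_t \|_{\call(H)}$ is finite. I would first introduce the auxiliary $H$-valued process
\begin{align*}
Y(\sigma) := \int_0^\sigma (\sigma - s)^{-\alpha} S_{\sigma - s} \Psi(s) \, dB^j(s), \quad \sigma \in [0,T],
\end{align*}
which is well-defined because $\alpha < \frac{1}{2}$ makes the kernel $(\sigma - s)^{-\alpha}$ square-integrable on $[0,\sigma]$. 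Then, using the semigroup property $S_{t-\sigma} S_{\sigma - s} = S_{t-s}$, a stochastic Fubini theorem, and the Euler reflection identity $\int_s^t (t-\sigma)^{\alpha - 1} (\sigma - s)^{-\alpha} \, d\sigma = \frac{\pi}{\sin(\pi \alpha)}$, I would establish the factorization identity
\begin{align*}
\int_0^t S_{t-s} \Psi(s) \, dB^j(s) = \frac{\sin(\pi \alpha)}{\pi} \int_0^t (t-\sigma)^{\alpha - 1} S_{t-\sigma} Y(\sigma) \, d\sigma, \quad t \in [0,T].
\end{align*}

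The point of this identity is that its right-hand side is a deterministic (Bochner) convolution in which the time $t$ no longer sits inside a stochastic integral, so the supremum over $t$ can be handled pathwise. Estimating $\| S_{t-\sigma} Y(\sigma) \| \leq M \| Y(\sigma) \|$ and applying H\"older's inequality with exponents $2p$ and $\frac{2p}{2p-1}$ gives, for every $t \in [0,T]$,
\begin{align*}
\bigg\| \int_0^t (t-\sigma)^{\alpha-1} S_{t-\sigma} Y(\sigma) \, d\sigma \bigg\| \leq M \bigg( \int_0^t (t-\sigma)^{(\alpha-1) \frac{2p}{2p-1}} d\sigma \bigg)^{\frac{2p-1}{2p}} \bigg( \int_0^T \| Y(\sigma) \|^{2p} d\sigma \bigg)^{\frac{1}{2p}}.
\end{align*}
The condition $\alpha > \frac{1}{2p}$ ensures $(\alpha-1)\frac{2p}{2p-1} > -1$, so the first integral is finite and bounded uniformly in $t \in [0,T]$ by a constant depending only on $\alpha, p, T$. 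Taking the supremum over $t$, raising to the power $2p$ and taking expectations yields a constant $C_1 > 0$ with
\begin{align*}
\bbe \bigg[ \sup_{t \in [0,T]} \bigg\| \int_0^t S_{t-s} \Psi(s) \, dB^j(s) \bigg\|^{2p} \bigg] \leq C_1 \, \bbe \bigg[ \int_0^T \| Y(\sigma) \|^{2p} d\sigma \bigg].
\end{align*}

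It remains to bound the right-hand side by $\bbe \big[ \int_0^T \| \Psi(s) \|^{2p} ds \big]$. Since $B^j$ is real-valued, for fixed $\sigma$ the quantity $Y(\sigma)$ is the terminal value of an $H$-valued continuous martingale with bracket $\int_0^\sigma (\sigma-s)^{-2\alpha} \| S_{\sigma-s} \Psi(s) \|^2 ds$, so the Burkholder--Davis--Gundy inequality gives a constant $C_p > 0$ with
\begin{align*}
\bbe \big[ \| Y(\sigma) \|^{2p} \big] \leq C_p \, \bbe \bigg[ \bigg( \int_0^\sigma (\sigma-s)^{-2\alpha} \| S_{\sigma-s} \Psi(s) \|^2 ds \bigg)^p \bigg] \leq C_p M^{2p} \, \bbe \big[ (k * g)(\sigma)^p \big],
\end{align*}
where $g(s) := \| \Psi(s) \|^2$ and $k(u) := u^{-2\alpha} \mathbbm{1}_{(0,T]}(u)$. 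Integrating in $\sigma$ and applying Young's convolution inequality $\| k * g \|_{L^p} \leq \| k \|_{L^1} \| g \|_{L^p}$ pathwise --- where $\| k \|_{L^1([0,T])} = \frac{T^{1-2\alpha}}{1-2\alpha}$ is finite exactly because $\alpha < \frac{1}{2}$ --- gives
\begin{align*}
\bbe \bigg[ \int_0^T \| Y(\sigma) \|^{2p} d\sigma \bigg] \leq C_p M^{2p} \| k \|_{L^1}^p \, \bbe \bigg[ \int_0^T \| \Psi(s) \|^{2p} ds \bigg],
\end{align*}
and combining the two estimates completes the proof.

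The step I expect to require the most care is the justification of the stochastic Fubini theorem underlying the factorization identity, together with checking that $Y$ admits a jointly measurable version and that $\int_0^T \| Y(\sigma) \|^{2p} d\sigma < \infty$ almost surely, so that the pathwise H\"older estimate is legitimate. I would also stress that the decisive technical choice is to control the moments of $Y$ by Young's convolution inequality rather than by H\"older's inequality: Young's inequality only requires $k \in L^1$, i.e.\ $\alpha < \frac{1}{2}$, and it is this that permits the full range $p > 1$, whereas a naive use of H\"older at that step would force the more restrictive condition $p > 2$.
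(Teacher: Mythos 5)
Your argument is correct, but it is worth noting that the paper does not actually prove this lemma: its ``proof'' is a one-line citation to Lemma~3.3 of the referenced monograph, and that cited result is itself established by exactly the Da Prato--Kwapie\'n--Zabczyk factorization method you employ. So what you have done is supply a self-contained proof of the black-boxed ingredient, and the details check out: the choice $\alpha \in (\tfrac{1}{2p},\tfrac12)$ is exactly what makes both halves work ($\alpha>\tfrac{1}{2p}$ for the deterministic H\"older step, $\alpha<\tfrac12$ for integrability of the singular kernel), the Beta-function identity gives the factorization formula, and the Burkholder--Davis--Gundy inequality applies since $\int_0^\sigma(\sigma-s)^{-\alpha}S_{\sigma-s}\Psi(s)\,dB^j(s)$ is an $H$-valued martingale in its upper limit with the bracket you state. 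Your closing remark is the genuinely important technical point: estimating $\bbe[\int_0^T\|Y(\sigma)\|^{2p}d\sigma]$ via Young's inequality $\|k*g\|_{L^p}\le\|k\|_{L^1}\|g\|_{L^p}$ with $k(u)=u^{-2\alpha}\mathbf{1}_{(0,T]}(u)$ only needs $2\alpha<1$ and hence covers all $p>1$, whereas a pointwise H\"older bound on the bracket would force $\alpha<\tfrac12-\tfrac{1}{2p}$ and shrink the admissible range to $p>2$. One small gap in the write-up: square-integrability of the kernel $(\sigma-s)^{-\alpha}$ alone does not make $Y(\sigma)$ well defined, since $\Psi$ is not assumed bounded; what you need is $\int_0^\sigma(\sigma-s)^{-2\alpha}\|\Psi(s)\|^2ds<\infty$ a.s., which for a.e.\ $\sigma$ follows from the same Young-type convolution bound (the $L^1$ case), and a.e.\ $\sigma$ suffices for the Fubini argument --- you flag this area as delicate but the stated justification should be corrected. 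Also note the statement's upper integration limit $\int_0^T$ is a typo for $\int_0^t$, which you have silently and correctly repaired.
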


\begin{proof}
This follows, for example, from \cite[Lemma 3.3]{Atma-book}.
\end{proof}

\begin{lemma}\label{lemma-est-semigroup}
There is a constant $C > 0$ such that for all $t_1,t_2 \in [0,T]$ with $t_1 \leq t_2$ and all $x \in \mathcal{D}(A)$ we have
\begin{align}\label{est-semigroup}
\| S_{t_2} x - S_{t_1} x \| \leq C \| x \|_{\mathcal{D}(A)} |t_2 - t_1|.
\end{align}
\end{lemma}

\begin{proof}
According to \cite[Thm. 2.2]{Pazy} there are constants $M \geq 1$ and $\omega \in \mathbb{R}$ such that
\begin{align*}
\| S_t \| \leq M e^{\omega t} \quad \text{for all $t \geq 0$.}
\end{align*}
Therefore, by \cite[Thm. 2.4]{Pazy} we obtain
\begin{align*}
\| S_{t_2} x - S_{t_1} x \| &= \bigg\| \int_{t_1}^{t_2} S_s Ax ds \bigg\| \leq \int_{t_1}^{t_2} \| S_s Ax \| ds 
\\ &\leq \int_{t_1}^{t_2} \| S_s \| \, \| Ax \| ds  \leq Me^{\omega T} \| x \|_{\mathcal{D}(A)} |t_2 - t_1|,
\end{align*}
proving (\ref{est-semigroup}) with $C = M e^{\omega T}$.
\end{proof}

\section{Convergence rate for the Euler-Maruyama approximations}\label{sec-Euler}

In this section, we prove the stated convergence rate for the Euler-Maruyama approximations. The general mathematical framework is that of Section \ref{sec-framework}.

\begin{theorem}\label{thm-Euler}
Suppose that Assumptions \ref{ass-H} and \ref{ass-D} are fulfilled, and let $T > 0$, $p > 1$ and $x_0 \in \cald(A)$ be arbitrary. Then there is a constant $C > 0$ such that for each $m \in \bbn$ we have
\begin{align*}
\bbe \bigg[ \sup_{t \in [0,T]} \| Y_m(t) - X(t) \|^{2p} \bigg] \leq \frac{C}{m^{p-1}},
\end{align*}
where $X$ denotes the mild solution to the SPDE (\ref{SPDE}) with $X(0) = x_0$, and the $(Y_m)_{m \in \bbn}$ denote Euler-Maruyama approximations given by $Y_m(0) = x_0$ and (\ref{Euler-def}).
\end{theorem}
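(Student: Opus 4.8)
The plan is to carry out a standard strong error analysis for the Euler--Maruyama scheme, adapted to the setting of a general $C_0$-semigroup by exploiting that, thanks to Proposition~\ref{prop-ex-sol}, the solution $X$ takes values in $\cald(A)$. Write $e_m := Y_m - X$. Subtracting the mild representation of $X$ from the accelerated-Euler representation (\ref{Euler-formula}) cancels the terms $S_t x_0$, so that
\begin{align*}
e_m(t) &= \int_0^t S_{t-s}\big(\hat{b}(Y_m([s]_m^-)) - \hat{b}(X(s))\big)\,ds \\
&\quad + \sum_{j=1}^r \int_0^t S_{t-s}\big(\sigma_j(Y_m([s]_m^-)) - \sigma_j(X(s))\big)\,dB^j(s).
\end{align*}
In each coefficient difference I would insert the value at the grid point and split, for example, $\hat{b}(Y_m([s]_m^-)) - \hat{b}(X(s))$ into $\hat{b}(Y_m([s]_m^-)) - \hat{b}(X([s]_m^-))$ plus $\hat{b}(X([s]_m^-)) - \hat{b}(X(s))$, and analogously for each $\sigma_j$. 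By Lipschitz continuity of $\hat{b}$ and $\sigma_j$ (Assumption~\ref{ass-H} together with Lemma~\ref{lemma-rho}), the first bracket is the \emph{propagated error}, bounded by a constant times $\|e_m([s]_m^-)\| \le \sup_{u\le s}\|e_m(u)\|$, while the second bracket is the \emph{discretization error}, bounded by a constant times $\|X([s]_m^-) - X(s)\|$.

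The heart of the argument is the temporal regularity estimate $\bbe[\|X(s) - X([s]_m^-)\|^{2p}] \le C\,\delta_m^{p}$ for $s \in [0,T]$. To obtain it I would first record the uniform moment bound $\sup_{t\in[0,T]}\bbe[\|X(t)\|_{\cald(A)}^{2p}] < \infty$, which follows from standard moment estimates for the mild solution on the state space $(\cald(A),\|\cdot\|_{\cald(A)})$ furnished by Proposition~\ref{prop-ex-sol}. Writing $r := [s]_m^-$ and using the semigroup property in the mild formulation,
\begin{align*}
X(s) = S_{s-r}X(r) + \int_r^s S_{s-u}\hat{b}(X(u))\,du + \sum_{j=1}^r \int_r^s S_{s-u}\sigma_j(X(u))\,dB^j(u),
\end{align*}
I would estimate the three contributions to $X(s)-X(r)$ separately: the term $(S_{s-r}-\mathrm{Id})X(r)$ is $O(\delta_m)$ in the $2p$-norm by Lemma~\ref{lemma-est-semigroup} combined with the moment bound (here it is crucial that $X(r)\in\cald(A)$); the drift term is $O(\delta_m)$ by boundedness of $\hat{b}$ and of $(S_t)$ on $[0,T]$; and the stochastic term is $O(\delta_m^{1/2})$ by the Burkholder--Davis--Gundy inequality together with boundedness of the $\sigma_j$. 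Raising to the $2p$-th power, the dominant contribution $\delta_m^{p}$ stems from the stochastic term.

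With these ingredients I would set $\phi(t) := \bbe[\sup_{u\le t}\|e_m(u)\|^{2p}]$, apply Lemma~\ref{lemma-convolution} to the stochastic convolution and Jensen's inequality together with boundedness of $(S_t)$ to the deterministic convolution, and use $(a+b)^{2p}\le 2^{2p-1}(a^{2p}+b^{2p})$ to arrive at
\begin{align*}
\phi(t) \le C\int_0^t \phi(s)\,ds + C\int_0^T \bbe\big[\|X([s]_m^-) - X(s)\|^{2p}\big]\,ds \le C\int_0^t \phi(s)\,ds + \frac{C}{m^{p}},
\end{align*}
where the last step uses $\delta_m = T/m$. Gronwall's lemma then yields $\phi(T) \le C/m^{p}$, which for $m \ge 1$ is bounded by $C/m^{p-1}$, giving the claim.

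The main obstacle is the temporal regularity step, and specifically the treatment of $(S_{s-r}-\mathrm{Id})X(r)$: because $(S_t)$ is merely strongly continuous and need not be analytic, no regularization can be extracted from the semigroup, and the increment can only be controlled by invoking the $\cald(A)$-membership of $X(r)$ through Lemma~\ref{lemma-est-semigroup}. This is precisely where Assumption~\ref{ass-D}, which keeps the solution in $\cald(A)$ with finite $2p$-moments, enters in an essential way.
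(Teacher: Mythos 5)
Your proof is correct, but it takes a genuinely different route from the paper's. The paper works with the three-term decomposition (\ref{decomp-Euler}) through the auxiliary processes $\bar Y_m(t)=S_{t-[t]_m^-}Y_m([t]_m^-)$ and $\bar X_m(t)=S_{t-[t]_m^-}X([t]_m^-)$: Propositions \ref{prop-Euler-1} and \ref{prop-Euler-2} bound $\bbe[\sup_{t\in[0,T]}\|Y_m(t)-\bar Y_m(t)\|^{2p}]$ and $\bbe[\sup_{t\in[0,T]}\|\bar X_m(t)-X(t)\|^{2p}]$ by $C\delta_m^{p-1}$, and Proposition \ref{prop-Euler-3} closes the middle term with Gronwall. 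You instead subtract the two mild formulations directly, insert the grid value $X([s]_m^-)$ into the coefficients, and feed the pointwise temporal regularity estimate $\bbe[\|X(s)-X([s]_m^-)\|^{2p}]\le C\delta_m^{p}$ into Lemma \ref{lemma-convolution} and H\"older before applying Gronwall. Both arguments rest on the same structural inputs --- $\cald(A)$-valued moment bounds, Lemma \ref{lemma-est-semigroup} for $(S_h-\mathrm{Id})x$ with $x\in\cald(A)$, the maximal inequality for stochastic convolutions, and Gronwall --- but they differ in \emph{where} the local increment over $[[s]_m^-,s]$ is measured. The paper controls the supremum over $t\in[0,T]$ of stochastic integrals over the $m$ subintervals, which costs a union-bound factor $m=T/\delta_m$ and degrades $\delta_m^{p}$ to $\delta_m^{p-1}$; in your argument the increment only ever appears inside the convolution integrals, so only its pointwise-in-$s$ moment is needed and no factor of $m$ is lost. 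Consequently your route actually yields the sharper bound $C/m^{p}$ (the classical strong order $1/2$ for Euler--Maruyama), which of course implies the stated $C/m^{p-1}$; this improvement does not propagate to Theorem \ref{thm-main}, since the Wong--Zakai half (Theorem \ref{thm-Euler-WZ}) genuinely produces sup-of-local-increment terms of size $\delta_m^{p-1}$. One point worth making explicit in your write-up: the uniform bound $\sup_{t\in[0,T]}\bbe[\|X(t)\|_{\cald(A)}^{2p}]<\infty$ requires that $\hat b=b+\tfrac{\rho}{2}$ and the $\sigma_j$ restrict to Lipschitz continuous and bounded maps of $(\cald(A),\|\cdot\|_{\cald(A)})$; for $\rho$ this follows from Lemma \ref{lemma-rho} applied on the Hilbert space $(\cald(A),\|\cdot\|_{\cald(A)})$ together with Assumption \ref{ass-D}.
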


We will provide the proof of Theorem \ref{thm-Euler} at the end of this section. For each $m \in \bbn$ we introduce the processes $\bar{Y}_m$ and $\bar{X}_m$ as
\begin{align*}
\bar{Y}_m(t) &:= S_{t - [t]_m^-} Y_m([t]_m^-), \quad t \in \bbr_+,
\\ \bar{X}_m(t) &:= S_{t - [t]_m^-} X([t]_m^-), \quad t \in \bbr_+.
\end{align*}
Then, for each $m \in \bbn$ we have
\begin{align}\label{decomp-Euler}
Y_m - X = (Y_m - \bar{Y}_m) + (\bar{Y}_m - \bar{X}_m) + (\bar{X}_m - X).
\end{align}
In the upcoming proofs, we will denote by $C$ a suitable positive constant, possibly different from line to line, but only depending on $T$, $p$, $x_0$ and the parameters $(A,b,\sigma)$ of the SPDE (\ref{SPDE}).

\begin{proposition}\label{prop-Euler-1}
There is a constant $C > 0$ such that for each $m \in \bbn$ we have
\begin{align*}
\bbe \bigg[ \sup_{t \in [0,T]} \| Y_m(t) - \bar{Y}_m(t) \|^{2p} \bigg] \leq C \delta_m^{p-1}.
\end{align*}
\end{proposition}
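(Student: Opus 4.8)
The plan is to start from the explicit local representation of the Euler--Maruyama scheme. Fix $t \in [0,T]$ and let $k$ be the index with $t \in [k \delta_m, (k+1)\delta_m)$, so that $[t]_m^- = k \delta_m$ and $\bar{Y}_m(t) = S_{t - k\delta_m} Y_m(k \delta_m)$. Subtracting this from the defining identity (\ref{Euler-def}) cancels the semigroup term and leaves, for each such $t$,
\begin{align*}
Y_m(t) - \bar{Y}_m(t) &= \int_{[t]_m^-}^t S_{t-s} \hat{b}(Y_m([t]_m^-)) \, ds \\
&\quad + \sum_{j=1}^r \int_{[t]_m^-}^t S_{t-s} \sigma_j(Y_m([t]_m^-)) \, dB^j(s),
\end{align*}
where on the subinterval the frozen argument $Y_m([t]_m^-) = Y_m(k \delta_m)$ is $\calf_{k \delta_m}$-measurable. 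I would estimate the two summands separately, using that $\hat{b} = b + \frac{\rho}{2}$ and the $\sigma_j$ are bounded (Assumption \ref{ass-H} together with Lemma \ref{lemma-rho}) and that $\| S_u \| \leq M e^{\omega T}$ for $u \in [0,T]$ by \cite[Thm.~2.2]{Pazy}.

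The drift term is harmless. Pathwise one has $\| \int_{[t]_m^-}^t S_{t-s} \hat{b}(Y_m([t]_m^-)) \, ds \| \leq M e^{\omega T} \| \hat{b} \|_\infty (t - [t]_m^-) \leq C \delta_m$, uniformly in $t$ and $\omega$, so this summand contributes a term of order $\delta_m^{2p}$ to the left-hand side, which is dominated by $\delta_m^{p-1}$ since $\delta_m \leq T$ and $2p \geq p-1$.

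The stochastic term carries the true rate, and this is the crux. Since this term vanishes at each left grid point and does not connect across grid points, I would bound the global supremum by the sum over the $m$ subintervals, writing $M_k(t) := \sum_{j=1}^r \int_{k \delta_m}^t S_{t-s} \sigma_j(Y_m(k \delta_m)) \, dB^j(s)$, so that
\begin{align*}
\bbe \bigg[ \sup_{t \in [0,T]} \Big\| \sum_{j=1}^r \int_{[t]_m^-}^t S_{t-s} \sigma_j(Y_m([t]_m^-)) \, dB^j(s) \Big\|^{2p} \bigg] \leq \sum_{k=0}^{m-1} \bbe \Big[ \sup_{t \in [k\delta_m,(k+1)\delta_m]} \| M_k(t) \|^{2p} \Big].
\end{align*}
On each subinterval I would apply a Burkholder--Davis--Gundy--type maximal inequality for stochastic convolutions in its quadratic-variation form to obtain
\begin{align*}
\bbe \Big[ \sup_{t \in [k\delta_m,(k+1)\delta_m]} \| M_k(t) \|^{2p} \Big] \leq C \, \bbe \Big[ \Big( \int_{k\delta_m}^{(k+1)\delta_m} \sum_{j=1}^r \| \sigma_j(Y_m(k \delta_m)) \|^2 \, ds \Big)^p \Big] \leq C \delta_m^p,
\end{align*}
using boundedness of the $\sigma_j$. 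Summing the $m$ identical bounds and using $m \delta_m^p = T \delta_m^{p-1}$ yields $C \delta_m^{p-1}$, and combining with the drift estimate gives the claim.

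The decisive point, which I expect to be the main obstacle, is precisely the sharp per-subinterval power $\delta_m^p$. A crude application of Lemma \ref{lemma-convolution} over $[0,T]$, or even over a single subinterval in its stated $\int \| \cdot \|^{2p} ds$ form, only produces a factor $\delta_m$ per interval and hence the useless global bound $m \delta_m = T = O(1)$; the missing power $\delta_m^{p-1}$ is exactly what has to be recovered. It comes from the quadratic-variation form $(\int \| \cdot \|^2 ds)^p$, but the semigroup factor $S_{t-s}$ sitting inside the supremum over $t$ obstructs a direct martingale BDG argument, since $M_k$ is a stochastic convolution rather than a martingale. I would therefore secure the sharp estimate via the factorization method for stochastic convolutions (Da~Prato--Zabczyk): representing the convolution through a fractional kernel with exponent $\alpha \in (\frac{1}{2p}, \frac{1}{2})$, bounding the auxiliary process by BDG and the outer integral by Hölder's inequality, and carefully tracking the powers of the interval length $\delta_m$ to verify that they combine to exactly $\delta_m^p$ with a constant uniform in $k$ and $m$.
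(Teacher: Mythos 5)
Your decomposition and the treatment of the drift term coincide exactly with the paper's: the paper writes the same identity $Y_m(t)-\bar{Y}_m(t)=\int_{[t]_m^-}^t S_{t-s}\hat{b}(Y_m([s]_m^-))\,ds+\sum_j\int_{[t]_m^-}^t S_{t-s}\sigma_j(Y_m([s]_m^-))\,dB^j(s)$, bounds the drift integral pathwise by $C\delta_m$, and then simply invokes Lemma 2.4 of \cite{Nakayama-Support} for the stochastic term. Where you differ is that you reconstruct that cited estimate from scratch, and your reconstruction is sound: the reduction $\sup_{t\in[0,T]}\|\cdot\|^{2p}\le\sum_{k=0}^{m-1}\sup_{t\in I_k}\|M_k(t)\|^{2p}$ is legitimate because the stochastic increment restarts at each grid point, and you correctly diagnose that the paper's own Lemma \ref{lemma-convolution} (in its $\int\|\cdot\|^{2p}ds$ form) only yields $\delta_m$ per subinterval and hence nothing globally. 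Your proposed repair via factorization does deliver the sharp per-subinterval rate: with $\alpha\in(\frac{1}{2p},\frac{1}{2})$ and $\frac1q+\frac1{2p}=1$, the H\"older factor contributes $\delta_m^{(\alpha-\frac{1}{2p})2p}=\delta_m^{2p\alpha-1}$ and the auxiliary process contributes $\delta_m^{(1-2\alpha)p+1}$, and these combine to exactly $\delta_m^{p}$, uniformly in $k$ and $m$ since $\|S_u\|\le Me^{|\omega|T}$ and the $\sigma_j$ are globally bounded; summing over $k$ gives $T\delta_m^{p-1}$. So your argument is correct and, unlike the paper's one-line citation, self-contained; the price is carrying out the factorization bookkeeping, which is essentially what the cited lemma of \cite{Nakayama-Support} encapsulates (note that reference's Lemma 2.11 uses the same exponent window $\alpha\in(\frac{1}{2p},\frac{1}{2})$, so your route is in the same spirit as the outsourced proof).
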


\begin{proof}
We have
\begin{align*}
Y_m(t) - \bar{Y}_m(t) = \int_{[t]_m^-}^t S_{t-s} \hat{b}(Y_m([s]_m^-)) ds + \sum_{j=1}^r \int_{[t]_m^-}^t S_{t-s} \sigma_j(Y_m([s]_m^-)) dB^j(s).
\end{align*}
Since
\begin{align*}
\bigg\| \int_{[t]_m^-}^t S_{t-s} \hat{b}(Y_m([s]_m^-)) ds \bigg\| \leq C \delta_m,
\end{align*}
applying \cite[Lemma 2.4]{Nakayama-Support} completes the proof.
\end{proof}

\begin{proposition}\label{prop-Euler-2}
There is a constant $C > 0$ such that for each $m \in \bbn$ we have
\begin{align*}
\bbe \bigg[ \sup_{t \in [0,T]} \| \bar{X}_m(t) - X(t) \|^{2p} \bigg] \leq C \delta_m^{p-1}.
\end{align*}
\end{proposition}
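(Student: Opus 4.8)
The goal is to bound $\bar{X}_m(t) - X(t) = S_{t-[t]_m^-} X([t]_m^-) - X(t)$ uniformly in $t$, in $L^{2p}$. The natural approach is to exploit the mild solution representation of $X$ together with the semigroup estimate from Lemma~\ref{lemma-est-semigroup}, which is exactly what forces the hypothesis $x_0 \in \cald(A)$: because $X$ takes values in $\cald(A)$ (Proposition~\ref{prop-ex-sol}), we may measure the discrepancy between $X(t)$ and the semigroup acting on the frozen value $X([t]_m^-)$ in terms of the graph norm of $X$.

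First I would write $\bar{X}_m(t) - X(t) = S_{t-[t]_m^-} X([t]_m^-) - X(t)$ and compare this to the increment $X(t) - X([t]_m^-)$. Using the mild solution formula for $X$ on the state space $(\cald(A), \|\cdot\|_{\cald(A)})$, one sees that
\begin{align*}
X(t) - S_{t-[t]_m^-} X([t]_m^-) = \int_{[t]_m^-}^t S_{t-s} \hat{b}(X(s)) ds + \sum_{j=1}^r \int_{[t]_m^-}^t S_{t-s} \sigma_j(X(s)) dB^j(s),
\end{align*}
since applying $S_{t-[t]_m^-}$ to the mild formula at time $[t]_m^-$ and using the semigroup property collapses the leading terms (this is the same manipulation used in the proof of Proposition~\ref{prop-Euler-1}). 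Thus $\bar{X}_m - X$ is, up to sign, a drift term plus stochastic integrals over the short interval $[[t]_m^-, t]$ of length at most $\delta_m$, with the bounded coefficients $\hat{b}$ and $\sigma_j$.

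Next I would estimate the two pieces. The drift term is bounded in norm by $C\delta_m$ deterministically, using boundedness of $\hat{b}$ (Assumption~\ref{ass-H} together with Lemma~\ref{lemma-rho}) and the uniform bound $\|S_{t-s}\| \le Me^{\omega T}$; raising to the power $2p$ gives $C\delta_m^{2p} \le C\delta_m^{p-1}$ for large $m$. For the stochastic integrals I expect to invoke \cite[Lemma 2.4]{Nakayama-Support}, exactly as in Proposition~\ref{prop-Euler-1}: that lemma is tailored to produce the $\delta_m^{p-1}$ rate for the supremum over $[0,T]$ of $L^{2p}$-norms of stochastic convolutions of bounded integrands against the piecewise-frozen structure, and boundedness of the $\sigma_j$ supplies the required integrand bound.

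The main subtlety, and where I expect to spend the most care, is handling the discretized argument correctly: the representation involves $X(s)$ (the true solution) inside the integrals over $[[t]_m^-,t]$, not a frozen value, so I must confirm that \cite[Lemma 2.4]{Nakayama-Support} applies to this form, or alternatively reduce to it via the boundedness of the coefficients so that only the length-$\delta_m$ integration window matters. Once the drift contributes $O(\delta_m^{2p})$ and the stochastic convolution contributes $O(\delta_m^{p-1})$, the dominant term is $C\delta_m^{p-1}$, which yields the claimed bound after taking expectations and suprema.
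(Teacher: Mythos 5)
Your argument is correct and is essentially the proof of the result the paper simply delegates to a citation (\cite[Lemma 2.5]{Nakayama-Support}): the same decomposition of $X(t) - S_{t-[t]_m^-}X([t]_m^-)$ into a short-window drift integral of length at most $\delta_m$ plus stochastic convolutions, with the drift contributing $O(\delta_m^{2p})$ and the stochastic part $O(\delta_m^{p-1})$. The subtlety you flag is harmless: the $\delta_m^{p-1}$ rate for the stochastic term uses only boundedness of the integrand (a Burkholder-type estimate for stochastic convolutions gives $O(\delta_m^{p})$ on each of the $m$ subintervals, and summing over the subintervals costs the factor $m = T/\delta_m$), so nothing requires the argument of $\sigma_j$ to be frozen at grid points.
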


\begin{proof}
See \cite[Lemma 2.5]{Nakayama-Support}.
\end{proof}

\begin{lemma}\label{lemma-Euler-event-1}
There is a constant $C > 0$ such that for each $m \in \bbn$ we have
\begin{align*}
\bbe \bigg[ \sup_{t \in [0,T]} \| \bar{Y}_m(t) - Y_m([t]_m^-) \|^{2p} \bigg] \leq C \delta_m^{2p}.
\end{align*}
\end{lemma}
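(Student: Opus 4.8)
The plan is to control the difference pointwise in $t$ by means of the smoothing estimate from Lemma~\ref{lemma-est-semigroup}, and then to reduce the assertion to a uniform-in-$m$ moment bound for the Euler-Maruyama approximations measured in the graph norm. First I would observe that, by the definition of $\bar{Y}_m$,
\begin{align*}
\bar{Y}_m(t) - Y_m([t]_m^-) = \big( S_{t - [t]_m^-} - \mathrm{Id} \big) Y_m([t]_m^-), \quad t \in [0,T],
\end{align*}
and that $Y_m([t]_m^-) \in \cald(A)$ by the preceding proposition. Applying Lemma~\ref{lemma-est-semigroup} with $t_1 = 0$ and $t_2 = t - [t]_m^- \in [0,\delta_m]$ then yields a constant $C > 0$, independent of $m$ and $t$, with
\begin{align*}
\| \bar{Y}_m(t) - Y_m([t]_m^-) \| \leq C \, (t - [t]_m^-) \, \| Y_m([t]_m^-) \|_{\cald(A)} \leq C \delta_m \, \| Y_m([t]_m^-) \|_{\cald(A)}.
\end{align*}

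Raising this to the power $2p$, taking the supremum over $t \in [0,T]$, and noting that $\sup_{t \in [0,T]} \| Y_m([t]_m^-) \|_{\cald(A)} \leq \sup_{t \in [0,T]} \| Y_m(t) \|_{\cald(A)}$, I would arrive at
\begin{align*}
\bbe \bigg[ \sup_{t \in [0,T]} \| \bar{Y}_m(t) - Y_m([t]_m^-) \|^{2p} \bigg] \leq C \delta_m^{2p} \, \bbe \bigg[ \sup_{t \in [0,T]} \| Y_m(t) \|_{\cald(A)}^{2p} \bigg].
\end{align*}
Hence the claim follows as soon as the last expectation is bounded by a constant that does not depend on $m$.

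To establish this uniform moment bound I would work entirely on the separable Hilbert space $(\cald(A), \| \cdot \|_{\cald(A)})$, on which $(S_t|_{\cald(A)})_{t \geq 0}$ is a $C_0$-semigroup. The decisive point is that all coefficients are \emph{bounded} in the graph norm: by Assumption~\ref{ass-D} the maps $b|_{\cald(A)}$ and $\sigma_j|_{\cald(A)}$ are bounded, and applying Lemma~\ref{lemma-rho} on $(\cald(A), \| \cdot \|_{\cald(A)})$ shows that $\rho|_{\cald(A)}$, and therefore $\hat{b}|_{\cald(A)} = b|_{\cald(A)} + \frac{1}{2}\rho|_{\cald(A)}$, is bounded as well. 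Reading the accelerated exponential Euler formula of Lemma~\ref{lemma-Euler-acc} on the state space $\cald(A)$ and using the exponential growth bound $\| S_t \|_{\cald(A)} \leq M e^{\omega T}$, I would estimate the initial term by $M e^{\omega T} \| x_0 \|_{\cald(A)}$, the drift term by $M e^{\omega T} T \sup_{x} \| \hat{b}(x) \|_{\cald(A)}$, and each stochastic convolution term by the graph-norm version of Lemma~\ref{lemma-convolution} (which applies verbatim, its constant depending only on $T$ and $p$), giving a bound of the form $C \, \bbe[\int_0^T \| \sigma_j(Y_m([s]_m^-)) \|_{\cald(A)}^{2p} ds] \leq C T \sup_{x} \| \sigma_j(x) \|_{\cald(A)}^{2p}$. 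Since every bound on the right-hand side is finite and independent of $m$, this yields $\sup_{m \in \bbn} \bbe[\sup_{t \in [0,T]} \| Y_m(t) \|_{\cald(A)}^{2p}] < \infty$.

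The step requiring the most care is this last moment bound, but it is in fact simpler than a typical a priori estimate: because the coefficients are \emph{bounded} (and not merely Lipschitz) in the graph norm, the bound follows directly from their boundedness together with the convolution estimate, with no Gronwall argument needed, and the resulting constant is automatically uniform in $m$. The only point to verify is that the integrability hypothesis of the convolution lemma holds on $\cald(A)$, which is again immediate from the boundedness of $\sigma_j$ in the graph norm.
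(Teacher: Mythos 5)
Your proposal is correct and follows essentially the same route as the paper: the identity $\bar{Y}_m(t) - Y_m([t]_m^-) = (S_{t-[t]_m^-} - \mathrm{Id})Y_m([t]_m^-)$, the estimate of Lemma~\ref{lemma-est-semigroup}, and then the uniform-in-$m$ graph-norm moment bound for $Y_m$ obtained from Lemma~\ref{lemma-convolution} applied on $(\cald(A),\|\cdot\|_{\cald(A)})$ together with Assumption~\ref{ass-D}. The paper states this last step in one line; you have merely spelled out the details (boundedness of $\hat{b}$ and $\sigma_j$ in the graph norm, no Gronwall needed), which is exactly what the authors intend.
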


\begin{proof}
Note that
\begin{align*}
\bar{Y}_m(t) - Y_m([t]_m^-) = (S_{t - [t]_m^-} - {\rm Id}) Y_m([t]_m^-).
\end{align*}
Therefore, by Lemma \ref{lemma-est-semigroup} we obtain
\begin{align*}
\bbe \bigg[ \sup_{t \in [0,T]} \| \bar{Y}_m(t) - Y_m([t]_m^-) \|^{2p} \bigg] \leq C \delta_m^{2p} \bbe \bigg[ \sup_{t \in [0,T]} \| Y_m([t]_m^-) \|_{\cald(A)}^{2p} \bigg],
\end{align*}
which, by virtue of Lemma \ref{lemma-convolution} -- applied with the separable Hilbert space $(\cald(A),\| \cdot \|_{\cald(A)})$ -- and Assumption \ref{ass-D} completes the proof.
\end{proof}

\begin{proposition}\label{prop-Euler-3}
There is a constant $C > 0$ such that for each $m \in \bbn$ and each $v \in [0,T]$ we have
\begin{align*}
\bbe \bigg[ \sup_{t \in [0,v]} \| \bar{Y}_m(t) - \bar{X}_m(t) \|^{2p} \bigg] \leq C \bigg( \int_0^v \bbe \bigg[ \sup_{t \in [0,u]} \| Y_m(t) - X(t) \|^{2p} \bigg] du + \delta_m^{p-1} \bigg).
\end{align*}
\end{proposition}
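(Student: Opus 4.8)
The plan is to write $\bar Y_m-\bar X_m$ as a drift convolution plus a stochastic convolution, both truncated at the grid point $[t]_m^-$, and to estimate each so that the bound comes out in the Gronwall-ready integral form. Subtracting the mild-solution identity for $X$ from the accelerated Euler formula of Lemma~\ref{lemma-Euler-acc} (both evaluated at $[t]_m^-$) and then applying the bounded operator $S_{t-[t]_m^-}$ inside the integrals, using $S_{t-[t]_m^-}S_{[t]_m^--s}=S_{t-s}$, I would obtain
\[
\bar Y_m(t)-\bar X_m(t)=\underbrace{\int_0^{[t]_m^-}\!\! S_{t-s}\big(\hat b(Y_m([s]_m^-))-\hat b(X(s))\big)\,ds}_{=:I_1(t)}+\underbrace{\sum_{j=1}^r\int_0^{[t]_m^-}\!\! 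S_{t-s}\big(\sigma_j(Y_m([s]_m^-))-\sigma_j(X(s))\big)\,dB^j(s)}_{=:I_2(t)}.
\]
For the drift term $I_1$ I would use $\|S_{t-s}\|\le Me^{\omega T}$ and the Lipschitz property of $\hat b=b+\tfrac{\rho}{2}$ (Lipschitz by Lemma~\ref{lemma-rho} and Assumption~\ref{ass-H}), then apply H\"older's inequality in $s$ to pull the $2p$-th power inside the time integral and extend the upper limit from $[t]_m^-$ to $v$. This gives $\bbe[\sup_{t\in[0,v]}\|I_1(t)\|^{2p}]\le C\int_0^v\bbe[\|Y_m([s]_m^-)-X(s)\|^{2p}]\,ds$.

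The stochastic term $I_2$ is where the truncation at $[t]_m^-$ must be handled, and this is the main obstacle: a crude bound via $\|S_{t-[t]_m^-}\|\le C$ would only produce $\bbe[\sup_{u\in[0,v]}\|Y_m(u)-X(u)\|^{2p}]$ with a constant, which is useless for the subsequent Gronwall argument. Instead I would factor $S_{t-s}=S_{t-[t]_m^-}S_{[t]_m^--s}$ and write the $j$-th summand as $S_{t-[t]_m^-}Z_j([t]_m^-)$, where $Z_j(\tau):=\int_0^\tau S_{\tau-s}\big(\sigma_j(Y_m([s]_m^-))-\sigma_j(X(s))\big)\,dB^j(s)$ is an ordinary stochastic convolution. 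Since $\|S_{t-[t]_m^-}\|\le C$ and $[t]_m^-\le t$, one gets $\sup_{t\in[0,v]}\|I_2(t)\|\le C\sum_j\sup_{\tau\in[0,v]}\|Z_j(\tau)\|$, which is exactly the quantity controlled by the maximal inequality of Lemma~\ref{lemma-convolution} applied on the horizon $v$ (with constant uniform in $v\in[0,T]$). Combined with the Lipschitz property of the $\sigma_j$, this yields the same integral bound $\bbe[\sup_{t\in[0,v]}\|I_2(t)\|^{2p}]\le C\int_0^v\bbe[\|Y_m([s]_m^-)-X(s)\|^{2p}]\,ds$.

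It then remains to convert $\bbe[\|Y_m([s]_m^-)-X(s)\|^{2p}]$ into the stated right-hand side, which is where the remainder $\delta_m^{p-1}$ is produced. I would split $Y_m([s]_m^-)-X(s)=\big(Y_m([s]_m^-)-X([s]_m^-)\big)+\big(X([s]_m^-)-X(s)\big)$; the first difference is dominated by $\sup_{u\in[0,s]}\|Y_m(u)-X(u)\|$. For the second, writing $X([s]_m^-)-X(s)=({\rm Id}-S_{s-[s]_m^-})X([s]_m^-)+(\bar X_m(s)-X(s))$, Lemma~\ref{lemma-est-semigroup} bounds the first part by $C\delta_m\|X([s]_m^-)\|_{\cald(A)}$ (legitimate since $X$ takes values in $\cald(A)$ by Proposition~\ref{prop-ex-sol}), while Proposition~\ref{prop-Euler-2} bounds the second in $2p$-th mean by $C\delta_m^{p-1}$. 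Using the a priori moment estimate $\bbe[\sup_{t\in[0,T]}\|X(t)\|_{\cald(A)}^{2p}]<\infty$ for the mild solution on $\cald(A)$, together with $\delta_m^{2p}\le C\delta_m^{p-1}$ (valid since $\delta_m\le T$ and $2p\ge p-1$), I obtain $\bbe[\|Y_m([s]_m^-)-X(s)\|^{2p}]\le C\big(\bbe[\sup_{u\in[0,s]}\|Y_m(u)-X(u)\|^{2p}]+\delta_m^{p-1}\big)$. Substituting this into the $I_1$ and $I_2$ estimates and integrating over $s\in[0,v]$ (with $\int_0^v\delta_m^{p-1}\,ds\le T\delta_m^{p-1}$) completes the proof.
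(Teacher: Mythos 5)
Your proof is correct, and its skeleton coincides with the paper's: the same identity expressing \(\bar Y_m(t)-\bar X_m(t)\) as a drift convolution plus a stochastic convolution truncated at \([t]_m^-\), the same Lipschitz/H\"older estimate for the drift part, and the same appeal to Lemma~\ref{lemma-convolution} for the stochastic part (your explicit factorization \(S_{t-s}=S_{t-[t]_m^-}S_{[t]_m^--s}\) to justify the maximal inequality on the truncated convolution is actually spelled out more carefully than in the paper). The two arguments diverge only in how the resulting quantity \(\bbe[\|Y_m([s]_m^-)-X(s)\|^{2p}]\) is converted into the Gronwall-ready form: the paper splits it through \(\bar Y_m(s)\), i.e.\ as \((Y_m([s]_m^-)-\bar Y_m(s))+(\bar Y_m(s)-Y_m(s))+(Y_m(s)-X(s))\), and disposes of the first two pieces with Lemma~\ref{lemma-Euler-event-1} (order \(\delta_m^{2p}\)) and Proposition~\ref{prop-Euler-1} (order \(\delta_m^{p-1}\)), both of which are already proved and rest on a \(\cald(A)\)-moment bound for \(Y_m\). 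You instead split through \(X([s]_m^-)\) and \(\bar X_m(s)\), using Lemma~\ref{lemma-est-semigroup} together with an a priori bound \(\bbe[\sup_{t\in[0,T]}\|X(t)\|_{\cald(A)}^{2p}]<\infty\), plus Proposition~\ref{prop-Euler-2}. This works equally well and yields the same remainder \(C\delta_m^{p-1}\); the only caveat is that the \(\cald(A)\)-moment bound for \(X\) is nowhere explicitly recorded in the paper (it is standard from Proposition~\ref{prop-ex-sol} and the Lipschitz/boundedness hypotheses of Assumption~\ref{ass-D}, but you should state it or cite it), whereas the paper's choice recycles lemmas it has already established for \(Y_m\).
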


\begin{proof}
Note that
\begin{align*}
&\bar{Y}_m(t) - \bar{X}_m(t) = \int_0^{[t]_m^-} S_{t-s} \big( \hat{b}(Y([s]_m^-)) - \hat{b}(X(s)) \big) ds
\\ &\quad + \sum_{j=1}^r \int_0^{[t]_m^-} S_{t-s} \big( \sigma_j (Y([s]_m^-)) - \sigma_j(X(s)) \big) dB^j(s), \quad t \in [0,T].
\end{align*}
We have
\begin{align*}
&\bbe \Bigg[ \sup_{t \in [0,v]} \bigg\| \int_0^{[t]_m^-} S_{t-s} \big( \hat{b}(Y([s]_m^-)) - \hat{b}(X(s)) \big) ds \bigg\|^{2p} \Bigg]
\\ &\leq C \int_0^v \bbe [ \| Y([s]_m^-) - X(s) \|^{2p} ] ds.
\end{align*}
Furthermore, by Lemma \ref{lemma-convolution} we have
\begin{align*}
&\bbe \Bigg[ \sup_{t \in [0,v]} \bigg\| \int_0^{[t]_m^-} S_{t-s} \big( \sigma_j(Y([s]_m^-)) - \sigma_j(X(s)) \big) dB^j(s) \bigg\|^{2p} \Bigg]
\\ &\leq C \int_0^v \bbe [ \| Y([s]_m^-) - X(s) \|^{2p} ] ds.
\end{align*}
Therefore, we obtain
\begin{align*}
\bbe \bigg[ \sup_{t \in [0,v]} \| \bar{Y}_m(t) - \bar{X}_m(t) \|^{2p} \bigg] &\leq C \big( I_1(v) + I_2(v) + I_3(v) \big),
\end{align*}
where we have set
\begin{align*}
I_1(v) &:= \int_0^v \bbe \bigg[ \sup_{t \in [0,u]} \| Y([t]_m^-) - \bar{Y}_m(t) \|^{2p} \bigg] du,
\\ I_2(v) &:= \int_0^v \bbe \bigg[ \sup_{t \in [0,u]} \| \bar{Y}_m(t) - Y_m(t) \|^{2p} \bigg] du,
\\ I_3(v) &:= \int_0^v \bbe \bigg[ \sup_{t \in [0,u]} \| Y_m(t) - X(t) \|^{2p} \bigg] du.
\end{align*}
Therefore, applying Proposition \ref{prop-Euler-1} and Lemma \ref{lemma-Euler-event-1} completes the proof.
\end{proof}

Now, the proof of Theorem \ref{thm-Euler} is an immediate consequence of the decomposition (\ref{decomp-Euler}), Propositions \ref{prop-Euler-1}, \ref{prop-Euler-2}, \ref{prop-Euler-3} and Gronwall's inequality.

\section{Distance between the Euler-Maruyama approximations and the Wong-Zakai approximations}\label{sec-Euler-WZ}

In this section, we prove the stated convergence rate for the difference between the Euler-Maruyama approximations and the Wong-Zakai approximations. The general mathematical framework is that of Section \ref{sec-framework}.

\begin{theorem}\label{thm-Euler-WZ}
Suppose that Assumptions \ref{ass-H} and \ref{ass-D} are fulfilled, and let $T > 0$, $p > 1$ and $x_0 \in \cald(A)$ be arbitrary. Then there is a constant $C > 0$ such that for each $m \in \bbn$ we have
\begin{align*}
\bbe \bigg[ \sup_{t \in [0,T]} \| \xi_m(t) - Y_m(t) \|^{2p} \bigg] \leq \frac{C}{m^{p-1}},
\end{align*}
where the $(\xi_m)_{m \in \bbn}$ denote the mild solutions to the PDEs (\ref{WZ-approx}) with $\xi_m(0) = x_0$, and the $(Y_m)_{m \in \bbn}$ denote the Euler-Maruyama approximations given by $Y_m(0) = x_0$ and (\ref{Euler-def}).
\end{theorem}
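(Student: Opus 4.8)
The plan is to compare the two approximations through their global mild representations and then close the estimate with Gronwall's inequality, exactly as in the proof of Theorem~\ref{thm-Euler}; the novelty is that the forcing term must now reproduce the It\^o--Stratonovich correction with the correct order. By Lemma~\ref{lemma-Euler-acc} we have the global formula for $Y_m$, and iterating the single-interval representation of Proposition~\ref{prop-ex-sol-WZ} over the grid (using the semigroup property, just as in the proof of Lemma~\ref{lemma-Euler-acc}) yields the analogous global formula $\xi_m(t) = S_t x_0 + \int_0^t S_{t-s} b(\xi_m(s))\,ds + \sum_j \int_0^t S_{t-s}\sigma_j(\xi_m(s))\dot B_m^j(s)\,ds$. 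Subtracting and recalling $\hat{b} = b + \tfrac12\rho$, I would write $\xi_m - Y_m = I + II + III$, where $I(t) = \int_0^t S_{t-s}\big(b(\xi_m(s)) - b(Y_m([s]_m^-))\big)\,ds$ is the drift difference, $II(t) = -\tfrac12\int_0^t S_{t-s}\rho(Y_m([s]_m^-))\,ds$ is the correction carried by $\hat{b}$, and $III(t) = \sum_j\big[\int_0^t S_{t-s}\sigma_j(\xi_m(s))\dot B_m^j(s)\,ds - \int_0^t S_{t-s}\sigma_j(Y_m([s]_m^-))\,dB^j(s)\big]$ is the difference between the smooth (Wong--Zakai) driving and the It\^o driving.

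The term $I$ is routine: by the Lipschitz continuity of $b$ (Assumption~\ref{ass-H}) and the splitting $\|b(\xi_m(s)) - b(Y_m([s]_m^-))\| \lesssim \|\xi_m(s) - Y_m(s)\| + \|Y_m(s) - Y_m([s]_m^-)\|$, it produces the Gronwall input $\int_0^v \bbe[\sup_{t\in[0,u]}\|\xi_m(t)-Y_m(t)\|^{2p}]\,du$ plus a remainder of order $\delta_m^{p-1}$, the latter controlled exactly as in Proposition~\ref{prop-Euler-1} and Lemma~\ref{lemma-Euler-event-1}. Along the way one also needs the analogue of Proposition~\ref{prop-Euler-1} for $\xi_m$, namely $\bbe[\sup_t\|\xi_m(t)-\xi_m([t]_m^-)\|^{2p}] \lesssim \delta_m^{p-1}$, which follows from the same reasoning applied to the representation in Proposition~\ref{prop-ex-sol-WZ} together with the graph-norm regularity of Assumption~\ref{ass-D} and Lemma~\ref{lemma-est-semigroup}.

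The heart of the matter, and the main obstacle, is to show $\bbe[\sup_{t\in[0,v]}\|II(t)+III(t)\|^{2p}] \lesssim \delta_m^{p-1}$. On each subinterval $[k\delta_m,(k+1)\delta_m)$ the smooth driving is constant, $\dot B_m^j(s) = \Delta_k B^j/\delta_m$ with $\Delta_k B^j := B^j((k+1)\delta_m)-B^j(k\delta_m)$, so $III$ is built from terms $\tfrac{\Delta_k B^\ell}{\delta_m}\int_{k\delta_m}^{t} S_{t-s}\sigma_\ell(\xi_m(s))\,ds$. Inserting the mild expansion of $\xi_m(s)$ about the grid point $\xi_m(k\delta_m)$ and Taylor expanding $\sigma_\ell$ to second order (using $\sigma_\ell \in C_b^2$), the zeroth-order part reproduces the It\^o increment $\sigma_\ell(\xi_m(k\delta_m))\Delta_k B^\ell$ (with the passage from $\xi_m$ to $Y_m$ absorbed into $I$), while the first-order part produces $\tfrac12\sum_{\ell,j} D\sigma_\ell(\xi_m(k\delta_m))\sigma_j(\xi_m(k\delta_m))\,\Delta_k B^\ell\Delta_k B^j$. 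Replacing the product $\Delta_k B^\ell\Delta_k B^j$ by its compensator $\delta_m\mathbbm{1}_{\{\ell=j\}}$ recovers precisely $\tfrac12\rho(\xi_m(k\delta_m))\delta_m$, which cancels against the contribution of $II$ over that block.

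What remains after this cancellation is a centered martingale-type remainder $\tfrac12\sum_{k}\sum_{\ell,j}(\cdots)\,D\sigma_\ell(\xi_m(k\delta_m))\sigma_j(\xi_m(k\delta_m))\big(\Delta_k B^\ell\Delta_k B^j - \delta_m\mathbbm{1}_{\{\ell=j\}}\big)$ together with higher-order Taylor and semigroup remainders. Since the increments $\Delta_k B^\ell\Delta_k B^j - \delta_m\mathbbm{1}_{\{\ell=j\}}$ are centered and independent across $k$, each of $L^{2p}$-size of order $\delta_m$, a discrete Burkholder--Davis--Gundy estimate bounds the sum over the $m = T/\delta_m$ blocks in $L^{2p}$ by a quantity of order $\delta_m^{p} \leq \delta_m^{p-1}$; the within-interval discrepancy between the smooth and the It\^o driving, as well as the higher-order remainders, are localized to single subintervals, so that taking the supremum over $t\in[0,T]$ amounts to a maximum over the $m$ blocks and produces the factor $\delta_m^{p-1}$, exactly as in Propositions~\ref{prop-Euler-1} and~\ref{prop-Euler-2}. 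The semigroup factors $S_{t-s}$ are absorbed throughout via Lemma~\ref{lemma-convolution} and Lemma~\ref{lemma-est-semigroup}, the latter relying on the $\cald(A)$-valued regularity of $\xi_m$ and $Y_m$ from Assumption~\ref{ass-D}. Collecting $I$, $II$ and $III$ and applying Gronwall's inequality then gives the claimed bound; the principal difficulty lies in the second-order expansion that generates the It\^o--Stratonovich correction and in controlling the associated martingale remainder, the drift term and the semigroup bookkeeping being routine extensions of the estimates already established.
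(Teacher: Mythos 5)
Your proposal is correct and follows essentially the same route as the paper: your terms $I$, $II$, $III$ correspond to the paper's $\zeta_{m,5}$, $\zeta_{m,4}$ and $\zeta_{m,1}$--$\zeta_{m,3}$ after the second-order Taylor expansion of $\sigma_j(\xi_m)$ about the grid points, the compensator cancellation against the $\tfrac12\rho$ correction, the discrete martingale/BDG bound of order $\delta_m^{p}$ for the centered quadratic remainder, and Gronwall. The only points you gloss over --- that the cancellation is against $\rho(Y_m([s]_m^-))$ rather than $\rho(\xi_m(k\delta_m))$ (absorbed via Lipschitz continuity of $\rho$ into the Gronwall term) and that $\bbe[\sup_t\|\xi_m(t)\|_{\cald(A)}^{2q}]$ grows like $\delta_m^{-(q+1)}$ so the semigroup remainders only yield $\delta_m^{\kappa}$ for $\kappa<p$ --- are handled in the paper's Propositions \ref{prop-WZ-3} and \ref{prop-WZ-4} and do not change the conclusion.
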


We will provide the proof of Theorem \ref{thm-Euler-WZ} at the end of this section. For each $m \in \bbn$ we introduce the processes $\bar{\xi}_m$ and $\bar{Y}_m$ as
\begin{align*}
\bar{\xi}_m(t) &:= S_{t - [t]_m^-} \xi_m([t]_m^-), \quad t \in \bbr_+,
\\ \bar{Y}_m(t) &:= S_{t - [t]_m^-} Y_m([t]_m^-), \quad t \in \bbr_+.
\end{align*}
Then, for each $m \in \bbn$ we have
\begin{align}\label{decomp-Euler-WZ}
\xi_m - Y_m = (\xi_m - \bar{\xi}_m) + (\bar{\xi}_m - \bar{Y}_m) + (\bar{Y}_m - Y_m).
\end{align}

\begin{proposition}\label{prop-Euler-WZ-pre}
There is a constant $C > 0$ such that for each $m \in \bbn$ we have
\begin{align*}
\bbe \bigg[ \sup_{t \in [0,T]} \| \xi_m - \bar{\xi}_m \|^{2p} \bigg] \leq C \delta_m^{p-1}.
\end{align*}
\end{proposition}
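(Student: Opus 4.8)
The plan is to mimic the structure of the proof of Proposition \ref{prop-Euler-1}, the only difference being that the stochastic integral appearing there is now replaced by a pathwise (Bochner) integral against the polygonal derivative $\dot B_m^j$. First I would invoke Proposition \ref{prop-ex-sol-WZ}: for $t$ in the mesh interval $[k\delta_m,(k+1)\delta_m)$ we have $[t]_m^- = k\delta_m$, so that $\bar\xi_m(t) = S_{t-k\delta_m}\xi_m(k\delta_m)$ cancels the leading semigroup term in the representation of $\xi_m(t)$, leaving
\begin{align*}
\xi_m(t) - \bar\xi_m(t) = \int_{[t]_m^-}^t S_{t-s} b(\xi_m(s))\,ds + \sum_{j=1}^r \int_{[t]_m^-}^t S_{t-s}\sigma_j(\xi_m(s))\dot B_m^j(s)\,ds.
\end{align*}
Throughout I would use the semigroup bound $\|S_u\|\le Me^{\omega T} =: C$ for $u\in[0,T]$ (see the proof of Lemma \ref{lemma-est-semigroup}), together with the boundedness of $b$ and of the $\sigma_j$ granted by Assumption \ref{ass-H}.

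For the drift term the estimate is immediate and pathwise: since $b$ is bounded and the interval $[[t]_m^-,t]$ has length at most $\delta_m$, we get $\big\|\int_{[t]_m^-}^t S_{t-s}b(\xi_m(s))\,ds\big\| \le C\delta_m$, hence its $2p$-th power is at most $C\delta_m^{2p}$, which is dominated by $C\delta_m^{p-1}$ because $\delta_m\le T$ and $2p > p-1$.

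The crux is the term involving $\dot B_m^j$, and here the key observation is that on each mesh interval the polygonal derivative is \emph{constant}: for $s\in[[t]_m^-,t]$ one has $\dot B_m^j(s) = \delta_m^{-1}\big(B^j([t]_m^+) - B^j([t]_m^-)\big)$ by (\ref{derivative}). Combining this with $\|\sigma_j\|\le C$, $\|S_{t-s}\|\le C$ and $t-[t]_m^-\le\delta_m$ yields the pathwise bound
\begin{align*}
\sup_{t\in[0,T]}\bigg\| \int_{[t]_m^-}^t S_{t-s}\sigma_j(\xi_m(s))\dot B_m^j(s)\,ds\bigg\| \le C\max_{0\le k\le m-1}\big| B^j((k+1)\delta_m) - B^j(k\delta_m)\big|.
\end{align*}
It then remains to control the $2p$-th moment of this maximum of increments. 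I would bound the maximum crudely by the sum, $\bbe\big[\max_k|\Delta_k^j B|^{2p}\big]\le\sum_{k=0}^{m-1}\bbe\big[|\Delta_k^j B|^{2p}\big]$, where $\Delta_k^j B := B^j((k+1)\delta_m)-B^j(k\delta_m)$; since each increment is centred Gaussian with variance $\delta_m$, we have $\bbe\big[|\Delta_k^j B|^{2p}\big] = C\delta_m^p$, so that $m\delta_m = T$ gives $\bbe\big[\max_k|\Delta_k^j B|^{2p}\big]\le C\,m\,\delta_m^p = C\,T\,\delta_m^{p-1}$. Summing over $j=1,\ldots,r$, combining with the drift estimate, and applying the elementary inequality $\|x_1+\cdots+x_n\|^{2p}\le n^{2p-1}\sum_{i}\|x_i\|^{2p}$ then yields the claim. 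The main (and essentially only) obstacle is recognising that the supremum over $[0,T]$ collapses to a maximum over the $m$ mesh increments, after which the summed-moments estimate for Gaussian increments produces exactly the factor $m\delta_m^p = T\delta_m^{p-1}$ that delivers the asserted rate.
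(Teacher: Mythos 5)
Your proof is correct. The paper disposes of this proposition by simply citing Lemma 2.2 of \cite{Nakayama-Support}, and your argument is a correct self-contained reconstruction of exactly the expected mechanism: since $\dot B_m^j$ is constant on each mesh interval, the supremum over $[0,T]$ collapses to a maximum over the $m$ Gaussian increments, and the crude max-by-sum bound together with $\bbe\big[|\Delta_k^j B|^{2p}\big] = C\delta_m^p$ produces the factor $m\delta_m^p = T\delta_m^{p-1}$, which is the genuine bottleneck (the drift term only contributes $O(\delta_m^{2p})$).
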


\begin{proof}
See \cite[Lemma 2.2]{Nakayama-Support}.
\end{proof}

We have the identity
\begin{align*}
\bar{\xi}_m(t) - \bar{Y}_m(t) = S_{t - [t]_m^-} \big( \xi_m([t]_m^-) - Y_m([t]_m^-) \big),
\end{align*}
and hence

\begin{equation}\label{xi-X}
\begin{aligned}
\bar{\xi}_m(t) - \bar{Y}_m(t) &= S_{t - [t]_m^-} \bigg( \int_0^{[t]_m^-} S_{t-s} b(\xi_m(s))ds 
\\ &\quad + \sum_{j=1}^r \int_0^{[t]_m^-} S_{t-s} \sigma_j(\xi_m(s)) \dot{B}_m^j(s)ds
- \int_0^{[t]_m^-} S_{t-s} b(Y_m([s]_m^-))ds 
\\ &\quad - \frac{1}{2} \sum_{j=1}^r \int_0^{[t]_m^-} S_{t-s} D \sigma_j(Y_m([s]_m^-)) \sigma_j(Y_m([s]_m^-)) ds
\\ &\quad - \sum_{j=1}^r \int_0^{[t]_m^-} S_{t-s} \sigma_j(Y_m([s]_m^-))d B^j(s) \bigg).
\end{aligned}
\end{equation}
Fix an arbitrary $j \in \{ 1,\ldots,r \}$. By equation (2.6) in \cite{Nakayama-Support}, for each $m \in \mathbb{N}$ we have
\begin{align}\label{sigma-expression}
\sigma_j(\xi_m(t)) = \gamma_{m,1}^j(t) + \sum_{l=1}^r \gamma_{m,2}^{j,l}(t) + \gamma_{m,3}^j(t), \quad t \in [0,T],
\end{align}
where the quantity $\gamma_{m,1}^j(t)$ is given by
\begin{align*}
\gamma_{m,1}^j(t) := \sigma_j(\bar{\xi}_m(t)),
\end{align*}
the quantities $\gamma_{m,2}^{j,l}(t)$ for $l = 1,\ldots,r$ are given by
\begin{align*}
\gamma_{m,2}^{j,l}(t) := D \sigma_j(\bar{\xi}_m(t)) \int_{[t]_m^-}^t S_{t-u} \sigma_l(\bar{\xi}_m(u)) \dot{B}_m^l(u) du,
\end{align*}
and the quantity $\gamma_{m,3}^j(t)$ is given by
\begin{align*}
\gamma_{m,3}^j(t) &:= D \sigma_j(\bar{\xi}_m(t)) \int_{[t]_m^-}^t S_{t-u} b(\xi_m(u)) du
\\ &\quad + \sum_{l=1}^r D \sigma_j(\bar{\xi}_m(t)) \int_{[t]_m^-}^t S_{t-u} \bigg( \int_0^1 D \sigma_l(\bar{\xi}_m(u))
\\ &\quad + v ( \xi_m(u) - \bar{\xi}_m(u) ) ( \xi_m(u) - \bar{\xi}_m(u) ) dv \bigg) \dot{B}_m^l(u) du
\\ &\quad + \int_0^1 \bigg( \int_0^{v_1} D^2 \sigma_j(\bar{\xi}_m(t) + v_2 (\xi_m(t) - \bar{\xi}_m(t)))
\\ &\qquad ( \xi_m(t) - \bar{\xi}_m(t), \xi_m(t) - \bar{\xi}_m(t)) ) dv_2 \bigg) dv_1.
\end{align*}
We introduce the quantity $\zeta_{m,1}^j(t)$ as
\begin{align*}
\zeta_{m,1}^j(t) := \int_0^{[t]_m^-} S_{t-s} \gamma_{m,3}^j(s) \dot{B}_m^j(s)ds,
\end{align*}
the quantity $\zeta_{m,2}^j(t)$ as
\begin{align*}
\zeta_{m,2}^j(t) &:= \int_0^{[t]_m^-} S_{t-s} \gamma_{m,1}^j(s) \dot{B}_m^j(s)ds - \int_0^{[t]_m^-} S_{t-s} \sigma_j(Y_m([s]_m^-))dB^j(s),
\end{align*}
the quantities $\zeta_{m,3}^{j,l}(t)$ for $l = 1,\ldots,r$ as
\begin{align*}
\zeta_{m,3}^{j,j}(t) &:= \int_0^{[t]_m^-} S_{t-s} \gamma_{m,2}^{j,j}(s) \dot{B}_m^j(s)ds
\\ &\quad - \frac{1}{\delta_m} \int_0^{[t]_m^-} S_{t-s} D\sigma_j(\bar{\xi}_m(s)) \bigg( \int_{[s]_m^-}^s S_{s-u} \sigma_j(\bar{\xi}_m(u))du \bigg) ds
\end{align*}
and
\begin{align*}
\zeta_{m,3}^{j,l}(t) &:= \int_0^{[t]_m^-} S_{t-s} \gamma_{m,2}^{j,l}(s) \dot{B}_m^j(s)ds \quad \text{for $l \neq j$,}
\end{align*}
the quantity $\zeta_{m,4}^j(t)$ as
\begin{align*}
\zeta_{m,4}^j(t) &:= \frac{1}{\delta_m} \int_0^{[t]_m^-} S_{t-s} D\sigma_j(\bar{\xi}_m(s)) \bigg( \int_{[s]_m^-}^s S_{s-u} \sigma_j(\bar{\xi}_m(u))du \bigg) ds
\\ &\quad - \frac{1}{2} \int_0^{[t]_m^-} S_{t-s} D \sigma_j(Y_m([s]_m^-)) \sigma_j(Y_m([s]_m^-)) ds,
\end{align*}
and the quantity $\zeta_{m,5}(t)$ as
\begin{align*}
\zeta_{m,5}(t) := \int_0^{[t]_m^-} S_{t-s} \big( b(\xi_m(s)) - b(Y_m([s]_m^-)) \big) ds.
\end{align*}
Then we obtain
\begin{equation}\label{xi-part-3}
\begin{aligned}
&\bar{\xi}_m(t) - \bar{Y}_m(t) = S_{t - [t]_m^-}
\\ &\bigg( \sum_{j=1}^r \zeta_{m,1}^j(t) + \sum_{j=1}^r \zeta_{m,2}^j(t) + \sum_{j=1}^r \sum_{l=1}^r \zeta_{m,3}^{j,l}(t) + \sum_{j=1}^r \zeta_{m,4}^j(t) + \zeta_{m,5}(t) \bigg).
\end{aligned}
\end{equation}

\begin{proposition}\label{prop-WZ-1}
There is a constant $C > 0$ such that for each $m \in \bbn$ we have
\begin{align*}
\mathbb{E} \bigg[ \sup_{t \in [0,T]} \| \zeta_{m,1}^j(t) \|^{2p} \bigg] \leq C \delta_m^p.
\end{align*}
\end{proposition}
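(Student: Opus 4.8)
The plan is to reduce the claim to a purely pathwise estimate of a Bochner integral and then to a moment computation for Brownian increments. Since $\zeta_{m,1}^j(t) = \int_0^{[t]_m^-} S_{t-s}\gamma_{m,3}^j(s)\dot{B}_m^j(s)\,ds$ is an ordinary (pathwise) integral and $\|S_{t-s}\| \le C$ for $0 \le s \le t \le T$, I would first record the deterministic bound
\[
\sup_{t \in [0,T]} \| \zeta_{m,1}^j(t) \| \le C \int_0^T \| \gamma_{m,3}^j(s) \| \, |\dot{B}_m^j(s)| \, ds,
\]
so that the whole problem reduces to controlling the $L^{2p}$-norm of the right-hand side.

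The crucial step is a pathwise, interval-localized bound on $\gamma_{m,3}^j$. Write $I_k := [k\delta_m,(k+1)\delta_m)$ and $\Delta_k^l := B^l([t]_m^+) - B^l([t]_m^-)$ for $t \in I_k$, so that $\dot{B}_m^l \equiv \Delta_k^l/\delta_m$ is constant on $I_k$. From the mild equation for $\xi_m$ in Proposition \ref{prop-ex-sol-WZ}, together with the boundedness of $b$ and of the $\sigma_l$, each volatility integral over $I_k$ contributes at most $C|\Delta_k^l|$, which should yield
\[
\sup_{s \in I_k} \| \xi_m(s) - \bar{\xi}_m(s) \| \le C \Big( \delta_m + \sum_{l=1}^r |\Delta_k^l| \Big).
\]
I would then estimate $\gamma_{m,3}^j$ term by term for $s \in I_k$, using the boundedness of $D\sigma_j$ and $D^2\sigma_j$: the drift contribution is $O(\delta_m)$; the first-order Taylor term carries the factor $\sigma_l(\xi_m) - \sigma_l(\bar{\xi}_m)$ and is therefore controlled by $C\big(\sum_l|\Delta_k^l|\big)\sup_{I_k}\|\xi_m - \bar{\xi}_m\|$; and the second-order remainder is controlled by $C\sup_{I_k}\|\xi_m - \bar{\xi}_m\|^2$. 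Inserting the previous display and absorbing the cross terms gives
\[
\sup_{s \in I_k} \| \gamma_{m,3}^j(s) \| \le C \Big( \delta_m + \big( \textstyle\sum_{l=1}^r |\Delta_k^l| \big)^2 \Big).
\]

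Finally, exploiting once more that $\dot{B}_m^j$ is constant on each $I_k$, the integral collapses to a finite sum,
\[
\int_0^T \| \gamma_{m,3}^j(s) \| \, |\dot{B}_m^j(s)| \, ds \le C \sum_{k=0}^{m-1} |\Delta_k^j| \Big( \delta_m + \big( \textstyle\sum_{l=1}^r |\Delta_k^l| \big)^2 \Big),
\]
and I would bound its $L^{2p}$-norm by Minkowski's inequality together with the Gaussian moment estimates $\bbe\big[|\Delta_k^l|^q\big] \le C_q\delta_m^{q/2}$. The term with prefactor $\delta_m$ contributes $\delta_m \cdot m \cdot C\delta_m^{1/2} = O(\delta_m^{1/2})$ (recall $m\delta_m = T$), and the cubic term contributes $m \cdot C\delta_m^{3/2} = O(\delta_m^{1/2})$; hence the $L^{2p}$-norm is $O(\delta_m^{1/2})$, which is precisely the asserted bound $\bbe[\sup_t\|\zeta_{m,1}^j(t)\|^{2p}] \le C\delta_m^p$.

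I expect the main obstacle to be the term-by-term analysis of $\gamma_{m,3}^j$: one must correctly identify which factors are increments $\Delta_k^l$ — arising because $\dot{B}_m^l = \Delta_k^l/\delta_m$ is integrated over an interval of length $\delta_m$ — and which are displacements $\xi_m - \bar{\xi}_m$, and then verify that every surviving term is genuinely quadratic or higher in the increments, so that no $O(\delta_m^{1/2})$ contribution to $\gamma_{m,3}^j$ remains. The concluding moment estimate is then routine; it is worth noting that applying Minkowski's inequality per increment, rather than a global H\"older split through $\|\xi_m - \bar{\xi}_m\|_\infty$, is what delivers the sharp exponent, a global argument losing a factor $\delta_m^{\varepsilon}$.
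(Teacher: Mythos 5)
Your argument is correct, and it in fact supplies the details that the paper itself omits: the paper's ``proof'' of Proposition \ref{prop-WZ-1} is a one-line citation of \cite[Lemma 2.6]{Nakayama-Support}, and what you have written is essentially a reconstruction of that lemma's argument. The two delicate steps both check out. First, the interval-localized bound $\sup_{s \in I_k}\|\gamma_{m,3}^j(s)\| \le C\big(\delta_m + (\sum_{l}|\Delta_k^l|)^2\big)$ is right: the drift piece is $O(\delta_m)$; the first-order piece carries a factor $\|\sigma_l(\xi_m(u))-\sigma_l(\bar\xi_m(u))\| \le C\|\xi_m(u)-\bar\xi_m(u)\| \le C(\delta_m + \sum_{l'}|\Delta_k^{l'}|)$ times $|\Delta_k^l|$ (the $\dot B_m^l = \Delta_k^l/\delta_m$ integrated over length $\le \delta_m$), and the resulting cross term $|\Delta_k^l|\delta_m$ is absorbed via $|\Delta|\delta_m \le \tfrac12(\Delta^2+\delta_m^2) \le C(\Delta^2+\delta_m)$; the second-order Taylor remainder is $\le C\|\xi_m-\bar\xi_m\|^2$. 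Second, the concluding moment estimate works as you say: Minkowski in $L^{2p}$ over the $m$ blocks together with H\"older and $\bbe[|\Delta_k^l|^q]\le C_q\delta_m^{q/2}$ gives each block an $L^{2p}$-contribution of order $\delta_m^{3/2}$, hence a total of order $m\delta_m^{3/2}=T\delta_m^{1/2}$ and the bound $C\delta_m^p$ after raising to the power $2p$. Your closing remark is also apt: estimating per increment rather than through a global bound on $\sup_t\|\xi_m(t)-\bar\xi_m(t)\|$ (which is only $O(\delta_m^{1/2-\varepsilon})$ in $L^{2p}$) is exactly what yields the sharp exponent $p$ here, in contrast to the weaker rate $p(1-2\alpha)$ that such a global argument produces for $\zeta_{m,3}^{j,l}$ in \cite[Lemma 2.11]{Nakayama-Support}.
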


\begin{proof}
This follows from \cite[Lemma 2.6]{Nakayama-Support}.
\end{proof}

\begin{proposition}\label{prop-WZ-2}
There is a constant $C > 0$ such that for each $m \in \bbn$ and each
$v \in [0,T]$ we have
\begin{align*}
\mathbb{E} \bigg[ \sup_{t \in [0,v]} \| \zeta_{m,2}^j(t) \|^{2p} \bigg] \leq C \bigg( \int_0^v \bbe \bigg[ \sup_{t \in [0,u]} \| \xi_m(t) - Y_m(t) \|^{2p} \bigg] du + \delta_m^{2p} \bigg).
\end{align*}
\end{proposition}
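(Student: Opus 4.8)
The plan is to analyze the quantity

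\begin{align*}
\zeta_{m,2}^j(t) = \int_0^{[t]_m^-} S_{t-s} \gamma_{m,1}^j(s) \dot{B}_m^j(s)\,ds - \int_0^{[t]_m^-} S_{t-s} \sigma_j(Y_m([s]_m^-))\,dB^j(s),
\end{align*}

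which compares a Riemann--Stieltjes-type integral against the polygonal derivative $\dot B_m^j$ with an It\^o integral against the genuine Brownian increments. Recalling that $\gamma_{m,1}^j(s) = \sigma_j(\bar\xi_m(s))$ and that $\dot B_m^j(s) = (B^j([s]_m^+)-B^j([s]_m^-))/\delta_m$ is constant on each dyadic subinterval, I would first rewrite the first integral on a grid-interval basis: on each $[k\delta_m,(k+1)\delta_m)$ the factor $\dot B_m^j$ is the frozen increment divided by $\delta_m$, so this term is essentially a left-point (or averaged-kernel) approximation of an It\^o integral of $\sigma_j(\bar\xi_m)$. The natural strategy is then to add and subtract so as to split $\zeta_{m,2}^j$ into (i) a part that matches the two integrands $\sigma_j(\bar\xi_m(s))$ and $\sigma_j(Y_m([s]_m^-))$ against the \emph{same} stochastic integrator, and (ii) a discretization part measuring the difference between integrating $\sigma_j(\bar\xi_m)$ against $\dot B_m^j\,ds$ versus against $dB^j$.

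For part (i), I would write it as a stochastic convolution $\int_0^{[t]_m^-} S_{t-s}\big(\sigma_j(\bar\xi_m(s)) - \sigma_j(Y_m([s]_m^-))\big)\,dB^j(s)$ and control it via Lemma \ref{lemma-convolution}, which bounds the $2p$-th moment of the supremum by $C\,\bbe\int_0^T \|\sigma_j(\bar\xi_m(s)) - \sigma_j(Y_m([s]_m^-))\|^{2p}\,ds$. Using that $\sigma_j$ is Lipschitz (Assumption \ref{ass-H}) together with the relation $\bar\xi_m(s) = S_{s-[s]_m^-}\xi_m([s]_m^-)$ and the bound $\|S_{s-[s]_m^-}\xi_m([s]_m^-) - \xi_m([s]_m^-)\| \le C\delta_m\|\xi_m([s]_m^-)\|_{\cald(A)}$ from Lemma \ref{lemma-est-semigroup}, I would replace $\bar\xi_m(s)$ by $\xi_m([s]_m^-)$ at the cost of a $\delta_m^{2p}$ term (using the graph-norm moment bounds from Proposition \ref{prop-ex-sol-WZ} and Assumption \ref{ass-D}), reducing the integrand to $\|\xi_m([s]_m^-) - Y_m([s]_m^-)\|^{2p}$. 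Passing from the grid-frozen difference to $\sup_{t\in[0,u]}\|\xi_m(t) - Y_m(t)\|^{2p}$ inside the time integral then produces exactly the Gronwall-type term appearing in the statement, plus controlled $\delta_m$ remainders.

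For part (ii), the discretization difference between $\int S_{t-s}\sigma_j(\bar\xi_m(s))\dot B_m^j(s)\,ds$ and $\int S_{t-s}\sigma_j(\bar\xi_m(s))\,dB^j(s)$, I would exploit that on each subinterval $\dot B_m^j$ is the constant increment over $\delta_m$: integrating against $\dot B_m^j\,ds$ over $[k\delta_m,(k+1)\delta_m)$ reproduces the same Brownian increment that drives the It\^o integral, so the two differ only through the interaction of the semigroup kernel $S_{t-s}$ and the integrand variation within each cell. This is the same mechanism already handled in \cite{Nakayama-Support}, and I expect to invoke the relevant estimate there (in the spirit of \cite[Lemma 2.6]{Nakayama-Support} used for Proposition \ref{prop-WZ-1}) to bound this piece by $C\delta_m^{2p}$ or $C\delta_m^p$, which is absorbed into the $\delta_m^{2p}$ remainder.

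The main obstacle will be the careful bookkeeping in part (ii): one must verify that freezing the kernel and the integrand on each grid cell, and comparing the Riemann sum $\sum_k \sigma_j(\bar\xi_m(\cdot))\,\Delta B^j_k$ against the It\^o integral, genuinely leaves only higher-order remainders and does not introduce an It\^o--Stratonovich correction term at order $\delta_m^0$. The reason this works here is that $\gamma_{m,1}^j = \sigma_j(\bar\xi_m)$ is evaluated at the \emph{left} endpoint data (via $\bar\xi_m$ which is frozen at $[s]_m^-$ up to a semigroup shift), so the quadratic-variation correction does not appear in $\zeta_{m,2}^j$ but is instead deliberately isolated into $\zeta_{m,3}^{j,j}$ and $\zeta_{m,4}^j$; confirming this separation is clean is the delicate point. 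Once both parts are estimated, combining the Lipschitz/Gronwall term from (i) with the $\delta_m^{2p}$ remainders from (ii) yields the claimed bound with $\delta_m^{2p} \le \delta_m^{p-1}$ absorbed appropriately, completing the proof.
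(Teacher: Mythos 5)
Your overall architecture is close to the paper's: the paper likewise reduces $\zeta_{m,2}^j$ to a single stochastic integral of a cell-averaged discrepancy
$\tilde Y_m^j(u)=\frac{1}{\delta_m}\int_{[u]_m^-}^{[u]_m^+}S_{[u]_m^+-\tilde u}\sigma_j(\bar\xi_m(\tilde u))\,d\tilde u-S_{[u]_m^+-u}\sigma_j(Y_m([u]_m^-))$
(via \cite[Lemma 2.9]{Nakayama-Support}) and then splits that discrepancy into a Lipschitz part and semigroup-shift remainders; your observation that no It\^o--Stratonovich correction arises here because $\bar\xi_m$ is left-endpoint data (the correction being isolated in $\zeta_{m,3}^{j,j}$ and $\zeta_{m,4}^j$) is also correct. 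However, there is a genuine gap in how you price the remainders. You replace $\bar\xi_m(s)=S_{s-[s]_m^-}\xi_m([s]_m^-)$ by $\xi_m([s]_m^-)$ at a claimed cost of $\delta_m^{2p}$, citing ``graph-norm moment bounds from Proposition \ref{prop-ex-sol-WZ}''. That proposition provides no moment bounds, and the graph-norm moments of $\xi_m$ are \emph{not} uniformly bounded in $m$: Corollary \ref{cor-E-B-dot} gives $\bbe[\sup_{t\in[0,T]}\|\xi_m(t)\|_{\cald(A)}^{2p}]\leq C\delta_m^{-(p+1)}$, and one cannot do better in order, since $\xi_m$ is driven by $\dot B_m^j$, whose supremum has exploding moments (Lemma \ref{lemma-E-B-dot}). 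Your error term is therefore at best $C\delta_m^{2p}\cdot\delta_m^{-(p+1)}=C\delta_m^{p-1}$, not $C\delta_m^{2p}$; the same problem reappears in your part (ii), where the intra-cell variation of $\bar\xi_m$ is again of size $\delta_m\|\xi_m([s]_m^-)\|_{\cald(A)}$.

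The paper avoids this by never measuring $\xi_m$ in the graph norm: inside each cell it compares $\sigma_j(S_{\tilde u-[u]_m^-}\xi_m([u]_m^-))$ with $\sigma_j(S_{\tilde u-[u]_m^-}Y_m([u]_m^-))$ --- the \emph{same} semigroup shift on both arguments --- which by Lipschitz continuity and boundedness of the semigroup costs only $C\|\xi_m([u]_m^-)-Y_m([u]_m^-)\|$ (the Gronwall term), so that every $O(\delta_m)$ remainder involves only $\|Y_m([u]_m^-)\|_{\cald(A)}$, whose $2p$-th moments are uniformly bounded by Lemma \ref{lemma-convolution} applied on $(\cald(A),\|\cdot\|_{\cald(A)})$ together with Assumption \ref{ass-D}. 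You should reorganize your add-and-subtract accordingly; otherwise you only obtain a $\delta_m^{p-1}$ remainder, which would still suffice for Theorem \ref{thm-Euler-WZ} but does not prove the proposition as stated. Finally, your appeal to ``the relevant estimate in the spirit of \cite[Lemma 2.6]{Nakayama-Support}'' for part (ii) needs to be made concrete: that lemma concerns $\gamma_{m,3}^j$, whereas the estimate actually relevant to $\zeta_{m,2}^j$ is \cite[Lemma 2.9]{Nakayama-Support}.
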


\begin{proof}
An analogous calculation as in the proof of \cite[Lemma 2.9]{Nakayama-Support} shows that
\begin{align*}
\bbe \bigg[ \sup_{t \in [0,v]} \| \zeta_{m,2}^j(t) \|^{2p} \bigg] \leq C \bbe \bigg[ \int_0^{[v]_m^-} \| \tilde{Y}_m^j(u) \|^{2p} du \bigg], \quad v \in [0,T],
\end{align*}
where
\begin{align*}
\tilde{Y}_m^j(u) = \frac{1}{\delta_m} \int_{[u]_m^-}^{[u]_m^+} S_{[u]_m^+ - \tilde{u}} \sigma_j(\bar{\xi}_m(\tilde{u})) d \tilde{u} - S_{[u]_m^+ - u} \sigma_j(Y_m([u]_m^-)).
\end{align*}
Now, we have
\begin{align*}
\tilde{Y}_m^j(u) &= \frac{1}{\delta_m} \int_{[u]_m^-}^{[u]_m^+} S_{[u]_m^+ - \tilde{u}} \sigma_j(S_{\tilde{u} - [u]_m^-} \xi_m([u]_m^-) ) d \tilde{u}
\\ &\quad - \frac{1}{\delta_m} \int_{[u]_m^-}^{[u]_m^+} S_{[u]_m^+ - u} \sigma_j(Y_m([u]_m^-)) d \tilde{u},
\end{align*}
and hence, we obtain
\begin{align*}
\tilde{Y}_m^j(u) &= \frac{1}{\delta_m} \int_{[u]_m^-}^{[u]_m^+} S_{[u]_m^+ - \tilde{u}} \big( \sigma_j(S_{\tilde{u} - [u]_m^-} \xi_m([u]_m^-) ) - \sigma_j(S_{\tilde{u} - [u]_m^-} Y_m([u]_m^-) ) \big) d \tilde{u}
\\ &\quad + \frac{1}{\delta_m} \int_{[u]_m^-}^{[u]_m^+} S_{[u]_m^+ - \tilde{u}} \big( \sigma_j(S_{\tilde{u} - [u]_m^-} Y_m([u]_m^-) ) - \sigma_j(Y_m([u]_m^-)) \big) d \tilde{u}
\\ &\quad + \frac{1}{\delta_m} \int_{[u]_m^-}^{[u]_m^+} \big( S_{[u]_m^+ - \tilde{u}} - S_{[u]_m^+ - u} \big) \sigma_j(Y_m([u]_m^-)) d \tilde{u}.
\end{align*}
Therefore, and by Lemma \ref{lemma-est-semigroup} and Assumption \ref{ass-D} we obtain
\begin{align*}
\| \tilde{Y}_m^j(u) \| &\leq C \| \xi_m([u]_m^-) - Y_m([u]_m^-) \|
\\ &\quad + C \sup_{\tilde{u} \in [0,\delta_m]} \| ( S_{\tilde{u}} - {\rm Id} ) Y_m([u]_m^-) \| 
\\ &\quad + C \sup_{\genfrac{}{}{0pt}{}{u_1,u_2 \in [0,T]}{|u_1 - u_2| \leq \delta_m}} \| (S_{u_1} - S_{u_2}) \sigma_j(Y_m([u]_m^-)) \|
\\ &\leq C \| \xi_m([u]_m^-) - Y_m([u]_m^-) \| + C \delta_m \big( \| Y_m([u]_m^-) \|_{\cald(A)} + 1 \big).
\end{align*}
By Lemma \ref{lemma-convolution} -- applied with the separable Hilbert space $(\cald(A),\| \cdot \|_{\cald(A)})$ -- and Assumption \ref{ass-D} we obtain
\begin{align*}
&\bbe \bigg[ \sup_{t \in [0,v]} \| \zeta_{m,2}^j(t) \|^{2p} \bigg] \leq C \int_0^{[v]_m^-} \bbe \left[ \| \tilde{Y}_m^j(u) \|^{2p} \right] du
\\ &\leq  C \int_0^v \bigg( \bbe [ \| \xi_m([u]_m^-) - Y_m([u]_m^-) \|^{2p} ] + \delta_m^{2p} \Big( \bbe \left[ \| Y_m([u]_m^-) \|_{\cald(A)}^{2p} \right] + 1 \Big) \bigg) du
\\ &\leq C \bigg( \int_0^v \bbe \bigg[ \sup_{t \in [0,u]} \| \xi_m(t) - Y_m(t) \|^{2p} \bigg] du + \delta_m^{2p} \bigg),
\end{align*}
finishing the proof.
\end{proof}

\begin{lemma}\label{lemma-chi-square}
Let $X \sim {\rm N}(0,\sigma^2)$ be a normally distributed random variable with variance $\sigma^2 > 0$. Then, for each positive real number $q > 0$ we have
\begin{align*}
\bbe \left[ |X|^{2q} \right] = 2^q \frac{\Gamma(q + \frac{1}{2})}{\Gamma(\frac{1}{2})} \sigma^{2q}.
\end{align*}
\end{lemma}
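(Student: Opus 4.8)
The plan is to reduce the claim to the standard normal case by a scaling argument, and then to evaluate the resulting integral by the substitution that converts it into a Gamma integral.

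First I would write $X = \sigma Z$ with $Z \sim {\rm N}(0,1)$, so that $|X|^{2q} = \sigma^{2q} |Z|^{2q}$ and hence $\bbe[|X|^{2q}] = \sigma^{2q}\, \bbe[|Z|^{2q}]$. This isolates the dependence on $\sigma$ and reduces the statement to the identity $\bbe[|Z|^{2q}] = 2^q \Gamma(q + \tfrac{1}{2}) / \Gamma(\tfrac{1}{2})$ for the standard normal $Z$. Using the density of $Z$ together with the symmetry of the integrand, I would rewrite
\begin{align*}
\bbe[|Z|^{2q}] = \frac{1}{\sqrt{2\pi}} \int_{\bbr} |z|^{2q} e^{-z^2/2}\, dz = \frac{2}{\sqrt{2\pi}} \int_0^\infty z^{2q} e^{-z^2/2}\, dz.
\end{align*}

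Next I would apply the substitution $u = z^2/2$, for which $z = \sqrt{2u}$ and $dz = (2u)^{-1/2}\, du$, so that $z^{2q}\, dz = 2^{q - 1/2}\, u^{q - 1/2}\, du$. By the definition of the Gamma function this gives
\begin{align*}
\int_0^\infty z^{2q} e^{-z^2/2}\, dz = 2^{q - 1/2} \int_0^\infty u^{q - 1/2} e^{-u}\, du = 2^{q - 1/2}\, \Gamma\big( q + \tfrac{1}{2} \big).
\end{align*}
Substituting this back and simplifying the constants, using $\sqrt{2\pi} = \sqrt{2}\,\sqrt{\pi}$ and $\Gamma(\tfrac{1}{2}) = \sqrt{\pi}$, yields $\bbe[|Z|^{2q}] = 2^q \Gamma(q + \tfrac{1}{2}) / \Gamma(\tfrac{1}{2})$, and multiplying by $\sigma^{2q}$ produces the asserted formula.

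No step here presents a genuine obstacle; this is a routine computation, and the only point requiring care is the bookkeeping of the numerical constants arising from the substitution and the normalising factor $1/\sqrt{2\pi}$.
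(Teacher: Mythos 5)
Your proof is correct and follows essentially the same route as the paper: both reduce to the standard normal case by writing $X = \sigma Z$ and then identify the remaining moment $\bbe[|Z|^{2q}]$. The only difference is that the paper recognises $Z^2$ as a $\chi^2$ variable and cites a reference for its $q$-th moment, whereas you evaluate the Gamma integral directly via the substitution $u = z^2/2$, which makes the argument self-contained; your constant bookkeeping is accurate.
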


\begin{proof}
We have $X = \sigma Y$ with a random variable $Y \sim {\rm N}(0,1)$. Therefore, we have $Y^2 \sim \chi^2$, and by \cite[Sec. 7.8.1]{Wilks} we obtain
\begin{align*}
\bbe \left[ |X|^{2q} \right] = \bbe \left[ |\sigma Y|^{2q} \right] = \sigma^{2q} \bbe \left[ (Y^2)^q \right] = 2^q \frac{\Gamma(q+\frac{1}{2})}{\Gamma(\frac{1}{2})} \sigma^{2q},
\end{align*}
completing the proof.
\end{proof}

\begin{corollary}\label{cor-chi-square}
Let $q > 0$ be a positive real number. Then there is a constant $C > 0$ such that for each $m \in \bbn$ we have
\begin{align*}
\bbe \Big[ |B^j((k+1)\delta_m) - B^j(k \delta_m)|^{2q} \Big] \leq C \delta_m^q.
\end{align*}
\end{corollary}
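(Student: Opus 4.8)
The plan is to deduce the estimate directly from Lemma~\ref{lemma-chi-square}, since the Brownian increment in question is a centered Gaussian whose variance we can read off explicitly. The key observation is that, because $B^j$ is a standard Brownian motion, the increment over the interval $[k\delta_m,(k+1)\delta_m]$ is normally distributed with mean zero and variance equal to the length of the interval; that is,
\begin{align*}
B^j((k+1)\delta_m) - B^j(k\delta_m) \sim {\rm N}(0,\delta_m).
\end{align*}
This identifies the relevant variance as $\sigma^2 = \delta_m$, which is the only input needed to invoke the moment formula.

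Next I would apply Lemma~\ref{lemma-chi-square} with this choice $\sigma^2 = \delta_m$ and with the given exponent $q > 0$. The lemma yields the exact value
\begin{align*}
\bbe \Big[ |B^j((k+1)\delta_m) - B^j(k \delta_m)|^{2q} \Big] = 2^q \frac{\Gamma(q + \tfrac{1}{2})}{\Gamma(\tfrac{1}{2})} \, \delta_m^q.
\end{align*}
Setting $C := 2^q \Gamma(q+\tfrac{1}{2})/\Gamma(\tfrac{1}{2})$ then gives the asserted bound. The essential point for the statement is that this constant depends only on $q$ and not on $m$ (nor on the index $k$), so the same $C$ works uniformly for all $m \in \bbn$; this uniformity is automatic because the variance $\delta_m$ has been factored out entirely into the power $\delta_m^q$.

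There is really no serious obstacle here: the only substantive step is recognizing that the increment is Gaussian with variance exactly $\delta_m$, after which the corollary is an immediate specialization of Lemma~\ref{lemma-chi-square}. If anything, the one point worth stating carefully is the uniformity of $C$ in $m$ and $k$, which follows since the stationarity of Brownian increments makes the variance depend only on the increment length $\delta_m$ and not on its location.
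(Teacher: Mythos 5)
Your proof is correct and follows exactly the paper's own argument: identify the increment as ${\rm N}(0,\delta_m)$ and apply Lemma~\ref{lemma-chi-square} with $\sigma^2 = \delta_m$, yielding a constant depending only on $q$. Your extra remark on uniformity in $m$ and $k$ is a welcome clarification but does not change the argument.
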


\begin{proof}
Since $B^j((k+1)\delta_m) - B^j(k \delta_m) \sim {\rm N}(0,\delta_m)$ for each $k=0,\ldots,m-1$, this is an immediate consequence of Lemma \ref{lemma-chi-square}.
\end{proof}

The proof of the following auxiliary result is similar to that of Lemma 2.2 in \cite{Nakayama-Support}.

\begin{lemma}\label{lemma-E-B-dot}
Let $q > 0$ be a positive real number. Then there is a constant $C > 0$ such that for each $m \in \bbn$ we have
\begin{align*}
\mathbb{E} \bigg[ \sup_{t \in [0,T]} | \dot{B}_m^j(t) |^{2q} \bigg] \leq C \delta_m^{-(q+1)}.
\end{align*}
\end{lemma}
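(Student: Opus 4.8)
The plan is to exploit the fact that, by (\ref{derivative}), the derivative $\dot{B}_m^j$ is piecewise constant: on each subinterval $[k\delta_m,(k+1)\delta_m)$ it takes the single value $\delta_m^{-1}\big(B^j((k+1)\delta_m) - B^j(k\delta_m)\big)$. Consequently the supremum over the uncountable set $[0,T]$ collapses to a maximum over the finitely many indices $k = 0,\ldots,m-1$, namely
\begin{align*}
\sup_{t \in [0,T]} |\dot{B}_m^j(t)|^{2q} = \frac{1}{\delta_m^{2q}} \max_{0 \leq k \leq m-1} \big| B^j((k+1)\delta_m) - B^j(k \delta_m) \big|^{2q}.
\end{align*}
First I would record this identity, which turns the problem into a purely discrete moment estimate for the Brownian increments over the grid.

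Next I would bound the maximum crudely by the sum over $k$, take expectations, and apply Corollary \ref{cor-chi-square} to each of the $m$ summands. Each increment contributes at most $C \delta_m^{q}$, so the sum is bounded by $m C \delta_m^{q}$, whence
\begin{align*}
\mathbb{E} \bigg[ \sup_{t \in [0,T]} |\dot{B}_m^j(t)|^{2q} \bigg] \leq \frac{1}{\delta_m^{2q}} \sum_{k=0}^{m-1} \mathbb{E} \Big[ \big| B^j((k+1)\delta_m) - B^j(k \delta_m) \big|^{2q} \Big] \leq \frac{m C \delta_m^{q}}{\delta_m^{2q}} = \frac{m C}{\delta_m^{q}}.
\end{align*}
Finally, using $\delta_m = T/m$, that is $m = T/\delta_m$, I would rewrite the right-hand side as $C T \, \delta_m^{-(q+1)}$, which yields the claim after absorbing $T$ into the constant.

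I do not expect any substantial obstacle here; the only point that deserves a word of care is that replacing the maximum by the full sum is wasteful (it loses a factor $m$, rather than the logarithmic factor one would obtain from a genuine maximal inequality), yet this crude bound is still sufficient, because the extra factor $m = T/\delta_m$ is exactly what upgrades the exponent from $-q$ to the target $-(q+1)$. In particular no distributional estimate for the running maximum of the increments is needed — the piecewise-constant structure of $\dot{B}_m^j$ together with Corollary \ref{cor-chi-square} already delivers the stated rate.
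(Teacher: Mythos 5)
Your argument is correct and coincides with the paper's own proof: both reduce the supremum to the finitely many grid increments via the piecewise-constant form (\ref{derivative}), bound the maximum by the sum, apply Corollary \ref{cor-chi-square} to each term, and use $m = T/\delta_m$ to obtain the exponent $-(q+1)$. No gap to report.
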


\begin{proof}
Taking into account (\ref{derivative}), by Corollary \ref{cor-chi-square} we obtain
\begin{align*}
&\mathbb{E} \bigg[ \sup_{t \in [0,T]} | \dot{B}_m^j(t) |^{2q} \bigg] = \frac{1}{\delta_m^{2q}} \mathbb{E} \bigg[ \sup_{t \in [0,T]} | B^j([t]_m^+) - B^j([t]_m^-) |^{2q} \bigg]
\\ &\leq \frac{1}{\delta_m^{2q}} \sum_{k=0}^{m-1} \bbe \Big[ |B^j((k+1)\delta_m) - B^j(k \delta_m)|^{2q} \Big] \leq \frac{C}{\delta_m^{2q}} m \delta_m^q = C \delta_m^{-(q+1)},
\end{align*}
completing the proof.
\end{proof}

\begin{corollary}\label{cor-E-B-dot}
Let $q > 0$ be a positive real number. Then there is a constant $C > 0$ such that for each $m \in \bbn$ we have
\begin{align*}
\mathbb{E} \bigg[ \sup_{t \in [0,T]} \| \xi_m(t) \|_{\cald(A)}^{2q} \bigg] \leq C \delta_m^{-(q+1)}.
\end{align*}
\end{corollary}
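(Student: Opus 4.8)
The plan is to carry out the entire estimate on the state space $(\cald(A),\|\cdot\|_{\cald(A)})$ and to exploit that, under Assumption \ref{ass-D}, the coefficients $b$ and $\sigma_1,\ldots,\sigma_r$ are not merely Lipschitz but \emph{bounded} in the graph norm. This is what allows a direct pathwise estimate without any appeal to Gronwall's inequality, so that the only $m$-dependence will come from the size of the derivatives $\dot{B}_m^j$, which is already controlled by Lemma \ref{lemma-E-B-dot}.

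First I would record the global mild-solution representation of $\xi_m$ on $(\cald(A),\|\cdot\|_{\cald(A)})$. By Proposition \ref{prop-ex-sol-WZ} the process $\xi_m$ is the mild solution on $\cald(A)$, and an accumulation argument identical to the one in Lemma \ref{lemma-Euler-acc} yields
\begin{align*}
\xi_m(t) = S_t x_0 + \int_0^t S_{t-s} b(\xi_m(s)) ds + \sum_{j=1}^r \int_0^t S_{t-s} \sigma_j(\xi_m(s)) \dot{B}_m^j(s) ds, \quad t \in [0,T],
\end{align*}
where all integrals are $\cald(A)$-valued Bochner integrals; note that, for fixed $\omega$, each $\dot{B}_m^j$ is a piecewise constant, hence bounded, scalar function, so no It\^o integral is involved here.

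Next I would take graph norms. Since $(S_t|_{\cald(A)})_{t \geq 0}$ is a $C_0$-semigroup, there is a constant $\tilde{M} := M e^{\omega T}$ with $\| S_t \|_{\mathcal{L}(\cald(A))} \leq \tilde{M}$ for $t \in [0,T]$ (cf.\ \cite[Thm. 2.2]{Pazy}), and by Assumption \ref{ass-D} the mappings $b|_{\cald(A)}$ and $\sigma_j|_{\cald(A)}$ are bounded in the graph norm, say by a constant $K$. The crucial gain is that these bounds are uniform in the argument, so that
\begin{align*}
\| \xi_m(t) \|_{\cald(A)} &\leq \tilde{M} \| x_0 \|_{\cald(A)} + \tilde{M} K T + \tilde{M} K \sum_{j=1}^r \int_0^T | \dot{B}_m^j(s) | ds \\
&\leq C \Big( 1 + \sum_{j=1}^r \sup_{s \in [0,T]} | \dot{B}_m^j(s) | \Big),
\end{align*}
where I used $\int_0^T | \dot{B}_m^j(s) | ds \leq T \sup_{s \in [0,T]} | \dot{B}_m^j(s) |$. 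No Gronwall step is required.

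Finally I would raise this pathwise bound to the power $2q$, apply the elementary inequality $(a_0 + \cdots + a_r)^{2q} \leq C_{r,q}(a_0^{2q} + \cdots + a_r^{2q})$, take expectations, and invoke Lemma \ref{lemma-E-B-dot} to estimate each $\bbe[\sup_{s \in [0,T]} | \dot{B}_m^j(s) |^{2q}] \leq C \delta_m^{-(q+1)}$. Since $\delta_m \leq T$ gives $1 \leq T^{q+1} \delta_m^{-(q+1)}$, the constant term is absorbed and one arrives at $\bbe[\sup_{t \in [0,T]} \| \xi_m(t) \|_{\cald(A)}^{2q}] \leq C \delta_m^{-(q+1)}$. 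I do not expect any genuine obstacle; the only point to get right is to run the whole estimate in the graph norm, so that the \emph{boundedness} of the coefficients on $\cald(A)$ applies and the blow-up is isolated entirely in the factor $\dot{B}_m^j$ — which is precisely what Proposition \ref{prop-ex-sol-WZ} and Assumption \ref{ass-D} make possible.
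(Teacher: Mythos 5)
Your proposal is correct and follows essentially the same route as the paper: the paper's proof likewise starts from the $\cald(A)$-valued mild-solution representation furnished by Proposition \ref{prop-ex-sol-WZ}, invokes the graph-norm boundedness of the coefficients from Assumption \ref{ass-D}, and reduces the estimate to Lemma \ref{lemma-E-B-dot}. You have merely written out in full the pathwise bound that the paper compresses into the phrase ``immediate consequence,'' and every step you supply is sound.
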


\begin{proof}
By Proposition \ref{prop-ex-sol-WZ}, for each $m \in \bbn$ the process $\xi_m$ is a solution to the $\cald(A)$-valued integral equation
\begin{align*}
\xi_m(t) = S_t x_0 + \big( \cald(A) \, \text{--} \big) \int_0^t S_{t-s} \bigg( b(\xi_m(s)) + \sum_{j=1}^r \sigma_j(\xi_m(s)) \dot{B}_m^j(s) \bigg) ds, \quad t \in [0,T].
\end{align*}
Therefore, taking into account Assumption \ref{ass-D}, the stated estimate is an immediate consequence of Lemma \ref{lemma-E-B-dot}.
\end{proof}

The following result contributes to \cite[Lemma 2.11]{Nakayama-Support}, where it was shown that merely under Assumption \ref{ass-H} (that is, without imposing Assumption \ref{ass-D}) for each each $\alpha \in (\frac{1}{2p},\frac{1}{2})$ there is a constant $C > 0$ such that for each $l \in \{ 1,\ldots,r \}$ and each $m \in \bbn$ we have
\begin{align*}
\mathbb{E} \bigg[ \sup_{t \in [0,T]} \| \zeta_{m,3}^{j,l}(t) \|^{2p} \bigg] \leq C \delta_m^{p(1-2\alpha)}.
\end{align*}

\begin{proposition}\label{prop-WZ-3}
For each $\kappa \in (0,p)$ there is a constant $C > 0$ such that for each $l \in \{ 1,\ldots,r \}$ and each $m \in \bbn$ we have
\begin{align*}
\mathbb{E} \bigg[ \sup_{t \in [0,T]} \| \zeta_{m,3}^{j,l}(t) \|^{2p} \bigg] \leq C \delta_m^{\kappa}.
\end{align*}
\end{proposition}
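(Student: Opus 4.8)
My plan is to handle the cases $l \neq j$ and $l = j$ in parallel by reducing both to a single pathwise convolution estimate. In either case $\zeta_{m,3}^{j,l}(t)$ has the form $\int_0^{[t]_m^-} S_{t-s} f_m(s)\,ds$: for $l \neq j$ one has $f_m(s) = \gamma_{m,2}^{j,l}(s)\dot B_m^j(s)$, whereas for $l = j$ the subtracted term in the definition of $\zeta_{m,3}^{j,j}$ is absorbed into the integrand, giving $f_m(s) = \gamma_{m,2}^{j,j}(s)\dot B_m^j(s) - \frac{1}{\delta_m} D\sigma_j(\bar\xi_m(s))\int_{[s]_m^-}^s S_{s-u}\sigma_j(\bar\xi_m(u))\,du$. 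Writing $\Delta_k B^j := B^j((k+1)\delta_m) - B^j(k\delta_m)$, the decisive observation is that on each subinterval $[k\delta_m,(k+1)\delta_m)$ — where $\dot B_m^j$ equals the constant $\Delta_k B^j/\delta_m$ — the integrand factorizes as an $\calf_{k\delta_m}$-measurable vector (built from $\xi_m(k\delta_m)$ through the bounded maps $\sigma_j$, $D\sigma_j$ and the semigroup) times a scalar whose conditional expectation given $\calf_{k\delta_m}$ vanishes: the product $\Delta_k B^j\,\Delta_k B^l$ when $l \neq j$ (by independence of $B^j$ and $B^l$), and the centered square $(\Delta_k B^j)^2 - \delta_m$ when $l = j$ (which is precisely what the compensator term produces).

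To bring the supremum over $t$ inside, I would apply the deterministic factorization method. Fix $\alpha \in (\frac{1}{2p},\frac{1}{2})$. Pulling $S_{t-[t]_m^-}$ out in front and using the semigroup law together with the Beta-integral identity, one rewrites $\int_0^{[t]_m^-} S_{[t]_m^- - s} f_m(s)\,ds = c_\alpha \int_0^{[t]_m^-} ([t]_m^- - s)^{\alpha-1} S_{[t]_m^- - s} Y_m(s)\,ds$, where $Y_m(s) := \int_0^s (s-u)^{-\alpha} S_{s-u} f_m(u)\,du$. Since $S$ is uniformly bounded on $[0,T]$, Hölder's inequality in $s$ with exponents $2p$ and $q' = 2p/(2p-1)$ — whose outer kernel $([t]_m^- - s)^{(\alpha-1)q'}$ is integrable exactly because $\alpha > \frac{1}{2p}$ — yields $\sup_{t\in[0,T]}\|\zeta_{m,3}^{j,l}(t)\|^{2p} \leq C\int_0^T \|Y_m(s)\|^{2p}\,ds$, and therefore $\bbe[\sup_{t\in[0,T]}\|\zeta_{m,3}^{j,l}(t)\|^{2p}] \leq C\int_0^T \bbe[\|Y_m(s)\|^{2p}]\,ds$.

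It then remains to estimate $\bbe[\|Y_m(s)\|^{2p}]$ for fixed $s$, and here I would use the martingale structure from the first step. Decomposing $Y_m(s) = \sum_{k:\,k\delta_m < s} w_k(s)$ according to subintervals, each summand $w_k(s)$ is, by the factorization above, a martingale difference with respect to $(\calf_{k\delta_m})_k$ carrying the kernel weight $I_k(s) := \int_{k\delta_m}^{(k+1)\delta_m \wedge s}(s-u)^{-\alpha}\,du$, with $\|w_k(s)\| \leq C \delta_m^{-1}\,|\text{(increment scalar)}|\,I_k(s)$. The discrete Burkholder--Davis--Gundy inequality then gives $\bbe[\|Y_m(s)\|^{2p}] \leq C\,\bbe[(\sum_k \|w_k(s)\|^2)^p]$; after a Jensen step weighted by $I_k(s)^2$, the relevant moments of the increment scalar are supplied by Corollary \ref{cor-chi-square}, namely $\bbe[|\Delta_k B^j|^{2p}|\Delta_k B^l|^{2p}] \leq C\delta_m^{2p}$ for $l \neq j$ (using independence) and $\bbe[((\Delta_k B^j)^2 - \delta_m)^{2p}] \leq C\delta_m^{2p}$ for $l = j$. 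Because $\alpha < \frac{1}{2}$ one has $\sum_k I_k(s)^2 \leq C\delta_m$, and assembling these bounds yields $\bbe[\|Y_m(s)\|^{2p}] \leq C\delta_m^p$, hence $\bbe[\sup_{t}\|\zeta_{m,3}^{j,l}(t)\|^{2p}] \leq C\delta_m^p$. Since $\delta_m = T/m$, this is even the bound with $\kappa = p$, and the claimed estimate for arbitrary $\kappa \in (0,p)$ follows by absorbing $\delta_m^{p-\kappa} \leq T^{p-\kappa}$ into the constant.

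The step I expect to be most delicate is the correct interleaving of the two scales: the outer factorization is what permits the supremum over $t$ to be taken inside an $L^{2p}$-in-time norm, while the cancellation of the increment scalar — independence for $l \neq j$, the built-in compensator for $l = j$ — is what must be preserved when passing to $Y_m(s)$ and invoking Burkholder--Davis--Gundy. Carried out pathwise the estimate would be destroyed by the $\delta_m^{-1/2}$ growth of $\dot B_m$, so the martingale centering cannot be relinquished; and in the case $l = j$ one has to verify the exact algebraic identity by which the compensator combines with $\gamma_{m,2}^{j,j}(s)\dot B_m^j(s)$ to leave precisely $(\Delta_k B^j)^2 - \delta_m$ times an $\calf_{k\delta_m}$-measurable factor. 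The boundedness of $\sigma_j$ and $D\sigma_j$ from Assumption \ref{ass-H} keeps all coefficient factors uniformly bounded throughout, so for this particular term the graph-norm regularity of Assumption \ref{ass-D} does not appear to be needed.
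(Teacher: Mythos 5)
Your argument is correct, and it takes a genuinely different route from the paper. The paper splits $\zeta_{m,3}^{j,l}=\delta_m^{-2}(I_m^1+I_m^2+I_m^3)$, where $I_m^1,I_m^2$ carry the differences $\sigma_l(\bar\xi_m(u))-\sigma_l(\xi_m([u]_m^-))$ and $D\sigma_j(\bar\xi_m(s))-D\sigma_j(\xi_m([s]_m^-))$; these are estimated pathwise via Lemma \ref{lemma-est-semigroup} and the graph-norm blow-up bound $\bbe[\sup_t\|\xi_m(t)\|_{\cald(A)}^{2p\pi}]\leq C\delta_m^{-(p\pi+1)}$ of Corollary \ref{cor-E-B-dot}, and it is exactly this H\"older step that costs the exponent $1/\pi$ and forces $\kappa<p$. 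Only the frozen term $I_m^3$ is treated as a discrete martingale. You observe instead that no splitting is needed: since $\bar\xi_m(s)=S_{s-k\delta_m}\xi_m(k\delta_m)$ is already $\calf_{k\delta_m}$-measurable for $s$ in the $k$-th subinterval, the \emph{entire} integrand factors as an $\calf_{k\delta_m}$-measurable vector times the centered scalar $\tilde K_m(k)/\delta_m^2$, so the discrete BDG inequality (the paper's Lemma 2.10 of the cited reference) applies to the whole expression. Your use of the factorization method also handles the supremum over $t$ more carefully than the paper's proof, where the martingale $M_n^m$ still depends on $t$ through $S_{t-s}$. The payoff is twofold: you obtain the endpoint rate $C\delta_m^{p}$ (hence $C\delta_m^\kappa$ for every $\kappa\in(0,p)$ by absorbing $T^{p-\kappa}$), and you use only Assumption \ref{ass-H} for this term, as you note. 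The two computational points to write out in full are the uniform bound $\sum_k I_k(s)^2\leq C\delta_m$ (which follows from Cauchy--Schwarz on each subinterval and $2\alpha<1$) and the weighted Jensen step $\big(\sum_k I_k^2|\tilde K_m(k)|^2\big)^p\leq\big(\sum_k I_k^2\big)^{p-1}\sum_k I_k^2|\tilde K_m(k)|^{2p}$; both are as you describe.
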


\begin{proof}
Let
\begin{align*}
I_m(t):=\delta_m^{-2}\int_0^{[t]_m^-}S_{t-s}
D\sigma_j(\bar{\xi}_m(s))\int_{[s]_m^-}^sS_{s-u}
\sigma_l(\bar{\xi}_m(u))du
K_m(s)ds ,
\end{align*}
where
\begin{align*}
K_m(s):=
\begin{cases}
(B^j([s]_m^+)-B^j([s]_m^-))^2-\delta_m,\ & \text{if $j = l$,}\\
(B^j([s]_m^+)-B^j([s]_m^-))(B^l([s]_m^+)-B^l([s]_m^-)),\ & \text{if $j\neq l$.}
\end{cases}
\end{align*}
Then we have $\zeta_{m,3}^{j,l}(t) = I_m(t)$, and hence, it has to be shown that
\begin{align*}
\mathbb{E} \bigg[ \sup_{t \in [0,T]} \| I_m(t) \|^{2p} \bigg] \leq C \delta_m^{\kappa}.
\end{align*}
Note that
\begin{align*}
I_m(t) = \delta_m^{-2}(I_m^1(t)+I_m^2(t)+I_m^3(t)),
\end{align*}
where the quantity $I_m^1(t)$ is defined as
\begin{align*}
I_m^1(t) &:= \int_0^{[t]_m^-}S_{t-s}
D\sigma_j(\bar{\xi}_m(s))
\\ &\qquad \qquad \int_{[s]_m^-}^sS_{s-u}
(\sigma_l(\bar{\xi}_m(u))-\sigma_l(\xi_m([u]_m^-)))du
K_m(s)ds,
\end{align*}
the quantity $I_m^2(t)$ is defined as
\begin{align*}
I_m^2(t) &:= \int_0^{[t]_m^-}S_{t-s}
(D\sigma_j(\bar{\xi}_m(s))-D\sigma_j(\xi_m([s]_m^-)))
\\ &\qquad \qquad \int_{[s]_m^-}^sS_{s-u}
\sigma_l(\xi_m([u]_m^-))du
K_m(s)ds,
\end{align*}
and the quantity $I_m^3(t)$ is defined as
\begin{align*}
I_m^3(t) &:= \int_0^{[t]_m^-}S_{t-s}
D\sigma_j(\xi_m([s]_m^-))
\\ &\qquad \qquad \int_{[s]_m^-}^sS_{s-u}
\sigma_l(\xi_m([u]_m^-))du
K_m(s)ds.
\end{align*}
Next, we define
\begin{align*}
\tilde{K}_m(k):=
\begin{cases}
(B^j((k+1)\delta_m)-B^j(k\delta_m))^2-\delta_m,\ & \text{if $j = l$,}\\
(B^j((k+1)\delta_m)-B^j(k\delta_m))(B^l((k+1)\delta_m)-B^l(k\delta_m)),\ & \text{if $j\neq l$,}
\end{cases}
\end{align*}
and we choose constants $\pi,\theta \in (1,\infty)$ such that
\begin{align*}
p - \frac{1}{\pi} > \kappa \quad \text{and} \quad \frac{1}{\pi} + \frac{1}{\theta} = 1.
\end{align*}
Noting that $B^j$ and $B^l$ are independent for $j \neq l$, by Corollary \ref{cor-chi-square} (applied with $q = 2 p \theta$ in case $j = l$, and applied twice with $q = p \theta$ in case $j \neq l$) we have
\begin{equation*}
\bbe \left[\vert\tilde{K}_m(k)\vert^{2 p \theta}\right]^{1 / \theta} \leq C\delta_m^{2p} \quad \text{for all $k = 0,\ldots,m-1$.}
\end{equation*}
Now, we set
\begin{equation*}
U_m(t) := \sup_{0\leq u\leq\delta_m}
\Vert (S_u-{\rm Id})\xi_m(t)\Vert.
\end{equation*}
By Lemma \ref{lemma-est-semigroup}, for each $t \in [0,T]$ we have
\begin{align*}
U_m(t)^{2p\pi} &= \sup_{0\leq u\leq\delta_m}
\Vert (S_u-{\rm Id})\xi_m(t)\Vert^{2p\pi} \leq C \sup_{0\leq u\leq\delta_m}
u^{2p\pi} \Vert\xi_m(t)\Vert_{\cald(A)}^{2p\pi}
\\ &\leq C \delta_m^{2p\pi} \Vert\xi_m(t)\Vert_{\cald(A)}^{2p\pi}.
\end{align*}
Therefore, by Corollary \ref{cor-E-B-dot} (applied with $q = p \pi$) we obtain
\begin{align*}
\bbe\left[ \max_{0\leq k\leq m} U_m(k\delta_m)^{2p\pi}\right] &\leq \bbe \bigg[ \sup_{t \in [0,T]} U_m(t)^{2p\pi} \bigg] \leq C \delta_m^{2p\pi} \, \bbe \bigg[ \sup_{t \in [0,T]} \Vert\xi_m(t)\Vert_{\cald(A)}^{2p\pi} \bigg]
\\ &\leq C \delta_m^{2p\pi} \delta_m^{-(p \pi+1)} = C \delta_m^{p\pi-1}.
\end{align*}
Now we have
\begin{align*}
\Vert I_m^1(t)\Vert
&\leq C\int_0^{T}\int_{[s]_m^-}^s
\Vert (S_{u-[s]_m^-}-{\rm Id})\xi_m([s]_m^-)\Vert du
\vert K_m(s)\vert ds\\
&\leq C\int_0^{T}(s-[s]_m^-)
U_m([s]_m^-)
\vert K_m(s)\vert ds\\
&=
C\frac{\delta_m^2}2\sum_{k=0}^{m-1}
U_m(k\delta_m)
\vert
\tilde{K}_m(k)\vert.
\end{align*}
Hence we obtain
\begin{align*}
\Vert I_m^1(t)\Vert^{2p}
&\leq
C\left(\frac{\delta_m^2}2\right)^{2p}m^{2p-1}
\times\sum_{k=0}^{m-1}
U_m(k\delta_m)^{2p}
\vert
\tilde{K}_m(k)\vert^{2p}
\\ &\leq C\left(\frac{\delta_m^2}2\right)^{2p}m^{2p} \max_{0 \leq k \leq m}
U_m(k\delta_m)^{2p}
\vert
\tilde{K}_m(k)\vert^{2p}.
\end{align*}
By H\"{o}lder's inequality we conclude that
\begin{align*}
&\bbe \bigg[ \sup_{t \in [0,T]}\Vert I_m^1(t)\Vert^{2p} \bigg]
\\ &\leq
C\left(\frac{\delta_m^2}2\right)^{2p}m^{2p} \bbe\left[ \max_{0\leq k\leq m} U_m(k\delta_m)^{2p\pi}\right]^{1/\pi} \bbe \left[\vert\tilde{K}_m(k)\vert^{2p\theta}\right]^{1/\theta}
\\ &\leq C \delta_m^{4p} \delta_m^{-2p} \delta_m^{p - \frac{1}{\pi}} \delta_m^{2p} \leq C \delta_m^{4p} \delta_m^{\kappa}.
\end{align*}
Now we have
\begin{align*}
\Vert I_m^2(t)\Vert
&\leq C \int_0^T
\Vert(S_{s-[s]_m^-}-{\rm Id})\xi_m([s]_m^-)\Vert
(s-[s]_m^-)\vert K_m(s)\vert ds\\
&\leq
C \int_0^T
U_m([s]_m^-)
(s-[s]_m^-)\vert K_m(s)\vert ds.
\end{align*}
So by the same argument as above, we also have
\begin{equation*}
\bbe \bigg[ \sup_{t \in [0,T]}\Vert I_m^2(t)\Vert^{2p} \bigg]
\leq
C \delta_m^{4p} \delta_m^{\kappa}.
\end{equation*}
Next we have
\begin{equation*}
I_m^3(t)
=
\sum_{k=0}^{\delta_m^{-1}[t]_m^--1}
\int_{k\delta_m}^{(k+1)\delta_m}
S_{t-s}D\sigma_j(\xi_m(k\delta_m))\int_{k\delta_m}^sS_{s-u}
\sigma_l(\xi_m(k\delta_m))du
K_m(s)ds.
\end{equation*}
Let us consider the process $M^m = (M_n^m)_{n=0,1,\ldots,m}$ defined by
$$
M_n^m:=
\sum_{k=0}^{n-1}
\int_{k\delta_m}^{(k+1)\delta_m}
S_{t-s}D\sigma_j(\xi_m(k\delta_m))\int_{k\delta_m}^sS_{s-u}
\sigma_l(\xi_m(k\delta_m))du
K_m(s)ds
$$
for $n=1,2,\ldots,m$ with $M_0^m=0$. Then $M^m$ is a $(\calf_{n \delta_m})$-martingale, because $\bbe[K_m(s)] = 0$ for all $s \in [0,T]$, and $\xi_m(k \delta_m)$ is $\calf_{k \delta_m}$-measurable and $\tilde{K}_m(k)$ is independent of $\calf_{k \delta_m}$ for all $k = 0,\ldots,m-1$. Furthermore, by the Lemma 2.10 in the paper \cite{Nakayama-Support}, we have
\begin{align*}
&\bbe\left[\sup_{t \in [0,T]}\Vert I_m^3(t)\Vert^{2p}\right]
=
\bbe\left[\max_{1\leq n\leq m}\Vert M_n^m(t)\Vert^{2p}\right]
\leq
C_p \bbe \left[\left(\sum_{k=0}^{m-1}\Vert M_{k+1}^m-M_k^m\Vert^2\right)^p\right]\\
&=
C_p\bbe\Bigg[\Bigg(\sum_{k=0}^{m-1}\bigg\Vert
\int_{k\delta_m}^{(k+1)\delta_m}
S_{t-s}D\sigma_j(\xi_m(k\delta_m))
\\ &\qquad \qquad \qquad \int_{k\delta_m}^sS_{s-u}
\sigma_l(\xi_m(k\delta_m))du
K_m(s)ds
\bigg\Vert^2\Bigg)^p\Bigg],
\end{align*}
where $C_p$ is depending only on $p$, and hence
\begin{align*}
&\bbe\left[\sup_{t \in [0,T]}\Vert I_m^3(t)\Vert^{2p}\right] \leq
C\bbe\left[\left(\sum_{k=0}^{m-1}
\left(\int_{k\delta_m}^{(k+1)\delta_m}
(s-k\delta_m)
\vert K_m(s)\vert ds
\right)^2\right)^p\right]\\
&=
C\left(\frac{\delta_m^2}2\right)^{2p}\bbe\left[\left(\sum_{k=0}^{m-1}
\vert\tilde{K}_m(k)\vert^2\right)^p\right]
\leq
C\left(\frac{\delta_m^2}2\right)^{2p}m^{p-1}
\sum_{k=0}^{m-1}
\bbe\left[\vert\tilde{K}_m(k)\vert^{2p}\right]\\
&\leq
C\left(\frac{\delta_m^2}2\right)^{2p}m^p
\delta_m^{2p}
\leq C\delta_m^{5p}.
\end{align*}
Combining above results, we have
\begin{equation*}
\bbe\left[\sup_{t \in [0,T]}\Vert I_m(t)\Vert^{2p}\right]
\leq C \delta_m^{-4p} ( \delta_m^{4p} \delta_m^{\kappa} + \delta_m^{5p} ) = C ( \delta_m^{\kappa} + \delta_m^p ) \leq C \delta_m^{\kappa},
\end{equation*}
completing the proof.
\end{proof}

\begin{proposition}\label{prop-WZ-4}
There is a constant $C > 0$ such that for each $m \in \bbn$ and each $v \in [0,T]$ we have
\begin{align*}
\mathbb{E} \bigg[ \sup_{t \in [0,v]} \| \zeta_{m,4}^j(t) \|^{2p} \bigg] \leq C \bigg( \int_0^v \bbe \bigg[ \sup_{t \in [0,u]} \| \xi_m(t) - Y_m(t) \|^{2p} \bigg] du + \delta_m^{2p} \bigg).
\end{align*}
\end{proposition}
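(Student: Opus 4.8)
The plan is to estimate $\zeta_{m,4}^j$ by comparing the Wong--Zakai correction term (its first summand) with the It\^o--Stratonovich correction term (its second summand), arranging the comparison so that every fine time-discretization estimate is carried out on the Euler--Maruyama polygon rather than on $\bar{\xi}_m$. Writing $\zeta_{m,4}^j(t) = \int_0^{[t]_m^-} S_{t-s}\big(P_m(s) - Q_m(s)\big)\,ds$ with $P_m(s) := \frac{1}{\delta_m} D\sigma_j(\bar{\xi}_m(s))\int_{[s]_m^-}^s S_{s-u}\sigma_j(\bar{\xi}_m(u))\,du$ and $Q_m(s) := \frac12 D\sigma_j(Y_m([s]_m^-))\sigma_j(Y_m([s]_m^-))$, I would introduce the intermediate quantity $\tilde{P}_m(s) := \frac{1}{\delta_m} D\sigma_j(\bar{Y}_m(s))\int_{[s]_m^-}^s S_{s-u}\sigma_j(\bar{Y}_m(u))\,du$, where $\bar{Y}_m(u) = S_{u-[u]_m^-}Y_m([u]_m^-)$, and split $P_m - Q_m = (P_m - \tilde{P}_m) + (\tilde{P}_m - Q_m)$.

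For the difference $P_m - \tilde{P}_m$ I would use only the Lipschitz continuity and boundedness of $\sigma_j$ and $D\sigma_j$ on $H$ together with the elementary bound $\|\bar{\xi}_m(u) - \bar{Y}_m(u)\| = \|S_{u-[u]_m^-}(\xi_m([u]_m^-) - Y_m([u]_m^-))\| \leq C\|\xi_m([u]_m^-) - Y_m([u]_m^-)\|$. Since the inner integral runs over an interval of length at most $\delta_m$ and $\sigma_j$ is bounded, the prefactor $\frac{1}{\delta_m}$ is exactly compensated, so that $\|P_m(s) - \tilde{P}_m(s)\| \leq C\|\xi_m([s]_m^-) - Y_m([s]_m^-)\|$, with no graph norm entering. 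Applying $\int_0^{[t]_m^-} S_{t-s}(\cdot)\,ds$, taking the supremum over $t \in [0,v]$, Jensen's inequality and expectation then produce the Gronwall contribution $C\int_0^v \bbe\big[\sup_{w\in[0,u]}\|\xi_m(w) - Y_m(w)\|^{2p}\big]\,du$.

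For the difference $\tilde{P}_m - Q_m$ I would replace successively, inside $\tilde{P}_m$, the factors $\sigma_j(\bar{Y}_m(u))$, $S_{s-u}$ and $D\sigma_j(\bar{Y}_m(s))$ by $\sigma_j(Y_m([s]_m^-))$, ${\rm Id}$ and $D\sigma_j(Y_m([s]_m^-))$. By Lemma \ref{lemma-est-semigroup} and Assumption \ref{ass-D}, each replacement costs (after the prefactor $\frac{1}{\delta_m}$ is absorbed) a factor $C\delta_m(\|Y_m([s]_m^-)\|_{\cald(A)} + 1)$, and since $\bbe[\sup_{t\in[0,T]}\|Y_m(t)\|_{\cald(A)}^{2p}] \leq C$ uniformly in $m$ --- which follows, exactly as in the proof of Proposition \ref{prop-WZ-2}, from Lemma \ref{lemma-convolution} applied on $(\cald(A),\|\cdot\|_{\cald(A)})$ and the graph-norm boundedness of $\hat{b}$ and the $\sigma_j$ --- each such error contributes $C\delta_m^{2p}$ after integration and taking the $2p$-th moment. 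After these replacements $\tilde{P}_m(s)$ is turned into $\frac{s-[s]_m^-}{\delta_m} D\sigma_j(Y_m([s]_m^-))\sigma_j(Y_m([s]_m^-))$, so that the remaining main term is $\int_0^{[t]_m^-} S_{t-s}\big(\frac{s-[s]_m^-}{\delta_m} - \frac12\big) W_m(s)\,ds$ with $W_m(s) := D\sigma_j(Y_m([s]_m^-))\sigma_j(Y_m([s]_m^-))$. Here the decisive cancellation enters: on each subinterval $[k\delta_m,(k+1)\delta_m)$ the weight $\frac{s-k\delta_m}{\delta_m} - \frac12$ has vanishing integral, so I may subtract the constant $S_{t-k\delta_m}W_m$ and invoke Lemma \ref{lemma-est-semigroup} to get $\|(S_{t-s} - S_{t-k\delta_m})W_m\| \leq C\delta_m\|W_m\|_{\cald(A)}$; since $W_m(s) \in \cald(A)$ with $\|W_m(s)\|_{\cald(A)} \leq C$ almost surely (Assumption \ref{ass-D} and Lemma \ref{lemma-rho} on $\cald(A)$), summing over the at most $m$ subintervals yields the deterministic bound $C m\delta_m^2 = C\delta_m$ for the main term, hence a $C\delta_m^{2p}$ contribution to its $2p$-th moment.

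The hard part, and the reason for routing the whole comparison through $\bar{Y}_m$, is that $\|\xi_m(t)\|_{\cald(A)}$ blows up of order $\delta_m^{-1/2}$ by Corollary \ref{cor-E-B-dot}: performing the discretization estimates directly on $\bar{\xi}_m$ would only give $\delta_m^{2p}\,\bbe[\|\xi_m\|_{\cald(A)}^{2p}]$ of order $\delta_m^{p-1}$, which is far too weak for the claimed rate $\delta_m^{2p}$. Replacing $\bar{\xi}_m$ by $\bar{Y}_m$ at the outset --- at the harmless price of the $H$-norm Gronwall term, which carries no graph norm --- shifts all graph-norm factors onto the Euler--Maruyama polygon, whose graph-norm moments are bounded uniformly in $m$, and this is precisely what makes the rate $\delta_m^{2p}$ attainable. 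Summing the Gronwall, the discretization and the main-term contributions yields the asserted estimate.
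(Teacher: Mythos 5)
Your proof is correct and takes essentially the same route as the paper's: the paper likewise interpolates through $\bar{Y}_m$ and then $Y_m([s]_m^-)$ (via its four intermediate quantities $I_{m,1}^j,\dots,I_{m,4}^j$), extracting the Gronwall term from the $H$-Lipschitz comparison of $\bar{\xi}_m$ with $\bar{Y}_m$ so that all graph-norm factors land on the Euler--Maruyama polygon. The decisive cancellation is also the same one, namely $\frac{1}{\delta_m}\int_{k\delta_m}^{(k+1)\delta_m}(s-k\delta_m)\,ds=\frac{\delta_m}{2}$ combined with Lemma \ref{lemma-est-semigroup} and the uniformly bounded $\cald(A)$-moments of $Y_m$, yielding the remaining $O(\delta_m^{2p})$ contributions.
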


\begin{proof}
The proof is similar to that of \cite[Lemma 2.12]{Nakayama-Support}. Note that
\begin{align*}
\zeta_{m,4}^j(t) = I_{m,1}^j(t) + I_{m,2}^j(t) + I_{m,3}^j(t) + I_{m,4}^j(t),
\end{align*}
where the quantity $I_{m,1}^j(t)$ is given by
\begin{align*}
I_{m,1}^j(t) &= \frac{1}{\delta_m} \int_0^{[t]_m^-} S_{t-s} D\sigma_j(\bar{\xi}_m(s)) \bigg( \int_{[s]_m^-}^s S_{s-u} \sigma_j(\bar{\xi}_m(u))du \bigg) ds
\\ &\quad - \frac{1}{\delta_m} \int_0^{[t]_m^-} S_{t-s} D\sigma_j(\bar{Y}_m(s)) \bigg( \int_{[s]_m^-}^s S_{s-u} \sigma_j(\bar{Y}_m(u))du \bigg) ds,
\end{align*}
the quantity $I_{m,2}^j(t)$ is given by
\begin{align*}
I_{m,2}^j(t) &= \frac{1}{\delta_m} \int_0^{[t]_m^-} S_{t-s} D\sigma_j(\bar{Y}_m(s)) \bigg( \int_{[s]_m^-}^s S_{s-u} \sigma_j(\bar{Y}_m(u))du \bigg) ds
\\ &\quad - \frac{1}{\delta_m} \int_0^{[t]_m^-} S_{t-s} D\sigma_j(Y_m([s]_m^-)) \bigg( \int_{[s]_m^-}^s S_{s-u} \sigma_j(Y_m([u]_m^-))du \bigg) ds,
\end{align*}
the quantity $I_{m,3}^j(t)$ is given by
\begin{align*}
I_{m,3}^j(t) &= \frac{1}{\delta_m} \int_0^{[t]_m^-} S_{t-s} D\sigma_j(Y_m([s]_m^-)) \bigg( \int_{[s]_m^-}^s S_{s-u} \sigma_j(Y_m([u]_m^-))du \bigg) ds
\\ &\quad - \frac{1}{\delta_m} \int_0^{[t]_m^-} (s - [s]_m^-) S_{t - [s]_m^-} D \sigma_j(Y_m([s]_m^-)) \sigma_j(Y_m([s]_m^-)) ds,
\end{align*}
and the quantity $I_{m,4}^j(t)$ is given by
\begin{align*}
I_{m,4}^j(t) &= \frac{1}{\delta_m} \int_0^{[t]_m^-} (s - [s]_m^-) S_{t - [s]_m^-} D \sigma_j(Y_m([s]_m^-)) \sigma_j(Y_m([s]_m^-)) ds
\\ &\quad - \frac{1}{2} \int_0^{[t]_m^-} S_{t-s} D \sigma_j(Y_m([s]_m^-)) \sigma_j(Y_m([s]_m^-)) ds.
\end{align*}
We have
\begin{align*}
\| I_{m,1}^j(t) \| &\leq \frac{1}{\delta_m} \bigg\| \int_0^{[t]_m^-} S_{t-s} D\sigma_j(\bar{\xi}_m(s))
\\ &\qquad\qquad \bigg( \int_{[s]_m^-}^s S_{s-u} \big( \sigma_j(\bar{\xi}_m(u)) - \sigma_j(\bar{Y}_m(u)) \big) du \bigg) ds \bigg\|
\\ &\quad + \frac{1}{\delta_m} \bigg\| \int_0^{[t]_m^-} S_{t-s} \big( D \sigma_j(\bar{\xi}_m(s)) - D\sigma_j(\bar{Y}_m(s)) \big)
\\ &\qquad\qquad \bigg( \int_{[s]_m^-}^s S_{s-u} \sigma_j(\bar{Y}_m(u))du \bigg) ds \bigg\|,
\end{align*}
and hence
\begin{align*}
\| I_{m,1}^j(t) \| &\leq C \bigg[ \frac{1}{\delta_m} \int_0^{[t]_m^-} \bigg( \int_{[s]_m^-}^s \| \bar{\xi}_m(u) - \bar{Y}_m(u) \| du \bigg) ds
\\ &\qquad + \int_0^{[t]_m^-} \| \bar{\xi}_m(s) - \bar{Y}_m(s) \| ds \bigg]
\\ &\leq C \int_0^{[v]_m^-} \| \xi_m([s]_m^-) - Y_m([s]_m^-) \| ds \leq C \int_0^{[v]_m^-} \sup_{t \in [0,u]} \| \xi_m(t) - Y_m(t) \| du.
\end{align*}
for all $0 \leq t \leq v \leq T$. Therefore, we get
\begin{align*}
\bbe \bigg[ \sup_{t \in [0,v]} \| I_{m,1}^j(t) \|^{2p} \bigg] \leq C \int_0^v \bbe \bigg[ \sup_{t \in [0,u]} \| \xi_m(t) - Y_m(t) \|^{2p} \bigg] du.
\end{align*}
Furthermore, we have
\begin{align*}
\| I_{m,2}^j(t) \| &= \frac{1}{\delta_m} \bigg\| \int_0^{[t]_m^-} S_{t-s} D\sigma_j(\bar{Y}_m(s))
\\ &\qquad\qquad \bigg( \int_{[s]_m^-}^s S_{s-u} \big( \sigma_j(\bar{Y}_m(u)) - \sigma_j(Y_m([u]_m^-)) \big) du \bigg) ds \bigg\|
\\ &\quad + \frac{1}{\delta_m} \bigg\| \int_0^{[t]_m^-} S_{t-s} \big( D\sigma_j(\bar{Y}_m(s)) - D\sigma_j(Y_m([s]_m^-)) \big)
\\ &\qquad\qquad \bigg( \int_{[s]_m^-}^s S_{s-u} \sigma_j(Y_m([u]_m^-))du \bigg) ds \bigg\|,
\end{align*}
and hence
\begin{align*}
\| I_{m,2}^j(t) \| &\leq C \bigg[ \frac{1}{\delta_m} \int_0^{[t]_m^-} \bigg( \int_{[s]_m^-}^s \| (S_{u - [u]_m^-} - {\rm Id}) Y_m([u]_m^-) \| du \bigg) ds
\\ &\quad + \int_0^{[t]_m^-} \| (S_{s - [s]_m^-} - {\rm Id}) Y_m([s]_m^-) \| ds \bigg]
\\ &\leq C \int_0^{[t]_m^-} \sup_{[s]_m^- \leq u \leq s} \| (S_{u - [u]_m^-} - {\rm Id}) Y_m([u]_m^-) \| ds. 
\end{align*}
Therefore, by Lemma \ref{lemma-est-semigroup} we obtain
\begin{align*}
&\bbe \bigg[ \sup_{t \in [0,T]} \| I_{m,2}^j(t) \|^{2p} \bigg] \leq C \bbe \bigg[ \sup_{t \in [0,T]} \| (S_{t - [t]_m^-} - {\rm Id}) Y_m([t]_m^-) \|^{2p} \bigg]
\\ &\leq C \delta_m^{2p} \bbe \bigg[ \sup_{t \in [0,T]} \| Y_m([t]_m^-) \|_{\cald(A)}^{2p} \bigg].
\end{align*}
Therefore, by virtue of Lemma \ref{lemma-convolution} -- applied with the separable Hilbert space $(\cald(A),\| \cdot \|_{\cald(A)})$ -- and Assumption \ref{ass-D} we obtain
\begin{align*}
\bbe \bigg[ \sup_{t \in [0,T]} \| I_{m,2}^j(t) \|^{2p} \bigg] \leq C \delta_m^{2p}.
\end{align*}
Furthermore, we have
\begin{align*}
\| I_{m,3}^j(t) \| &\leq \frac{1}{\delta_m} \bigg\| \int_0^{[t]_m^-} S_{t-s} D \sigma_j(Y_m([s]_m^-))
\\ &\qquad\qquad \bigg( \int_{[s]_m^-}^s (S_{s-u} - {\rm Id}) \sigma_j(Y_m([s]_m^-)) du \bigg) ds \bigg\|
\\ &\quad + \frac{1}{\delta_m} \bigg\| \int_0^{[t]_m^-} (s - [s]_m^-) \big( S_{t-s} - S_{t - [s]_m^-} \big)
\\ &\qquad\qquad D\sigma_j(Y_m([s]_m^-)) \sigma_j(Y_m([s]_m^-)) ds \bigg\|,
\end{align*}
and hence
\begin{align*}
\| I_{m,3}^j(t) \| &\leq C \bigg[ \frac{1}{\delta_m} \int_0^{[t]_m^-} \bigg( \int_{[s]_m^-}^s \| ( S_{s-u} - {\rm Id} ) \sigma_j(Y_m([s]_m^-)) \| du \bigg) ds
\\ &\qquad + \int_0^{[t]_m^-} \| ( S_{t-s} - S_{t - [s]_m^-} ) D\sigma_j(Y_m([s]_m^-)) \sigma_j(Y_m([s]_m^-)) \| ds \bigg].
\end{align*}
Therefore, by Lemma \ref{lemma-est-semigroup} and Assumption \ref{ass-D}, we obtain
\begin{align*}
\bbe \bigg[ \sup_{t \in [0,T]} \| I_{m,3}^j(t) \|^{2p} \bigg] \leq C \delta_m^{2p}.
\end{align*}
Moreover, we have
\begin{align*}
&\frac{1}{\delta_m} \int_0^{[t]_m^-} (s - [s]_m^-) S_{t - [s]_m^-} D \sigma_j(Y_m([s]_m^-)) \sigma_j(Y_m([s]_m^-)) ds
\\ &= \frac{1}{\delta_m} \sum_{k=0}^{\delta_m^{-1} [t]_m^- - 1} \int_{k \delta_m}^{(k+1)\delta_m} (s - k \delta_m) S_{t - k \delta_m} D \sigma_j(Y_m(k \delta_m)) \sigma_j(Y_m(k \delta_m)) ds
\\ &= \frac{\delta_m}{2} \sum_{k=0}^{\delta_m^{-1} [t]_m^- - 1} S_{t - k \delta_m} D \sigma_j(Y_m(k \delta_m)) \sigma_j(Y_m(k \delta_m))
\\ &= \frac{1}{2} \sum_{k=0}^{\delta_m^{-1} [t]_m^- - 1} \int_{k \delta_m}^{(k+1)\delta_m} S_{t - k \delta_m} D \sigma_j(Y_m(k \delta_m)) \sigma_j(Y_m(k \delta_m)) ds
\\ &= \frac{1}{2} \int_0^{[t]_m^-} S_{t - [s]_m^-} D \sigma_j(Y_m([s]_m^-)) \sigma_j(Y_m([s]_m^-)) ds.
\end{align*}
Therefore, we have
\begin{align*}
I_{m,4}^j(t) = \frac{1}{2} \int_0^{[t]_m^-} \big( S_{t - [s]_m^-} - S_{t-s} \big) D \sigma_j(Y_m([s]_m^-)) \sigma_j(Y_m([s]_m^-)) ds.
\end{align*}
Consequently, by Lemma \ref{lemma-est-semigroup} and Assumption \ref{ass-D}, we obtain
\begin{align*}
\bbe \bigg[ \sup_{t \in [0,T]} \| I_{m,4}^j(t) \|^{2p} \bigg] \leq C \delta_m^{2p},
\end{align*}
completing the proof.
\end{proof}

\begin{proposition}\label{prop-WZ-5}
There is a constant $C > 0$ such that for each $m \in \bbn$ and each
$v \in [0,T]$ we have
\begin{align*}
\mathbb{E} \bigg[ \sup_{t \in [0,v]} \| \zeta_{m,5}(t) \|^{2p} \bigg] \leq C \bigg( \int_0^v \bbe \bigg[ \sup_{t \in [0,u]} \| \xi_m(t) - Y_m(t) \|^{2p} \bigg] du + \delta_m^{p-1} \bigg).
\end{align*}
\end{proposition}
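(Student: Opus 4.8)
The plan is to exploit that the integrand defining $\zeta_{m,5}$ involves only the drift $b$, which is Lipschitz continuous by Assumption \ref{ass-H}, and that the semigroup is uniformly bounded on $[0,T]$ (as in the proof of Lemma \ref{lemma-est-semigroup}). First I would bound, for every $t \in [0,v]$,
\[
\| \zeta_{m,5}(t) \| \leq C \int_0^{[t]_m^-} \| \xi_m(s) - Y_m([s]_m^-) \| \, ds,
\]
and then pass to the $2p$-th power via H\"older's inequality, which yields $\| \zeta_{m,5}(t) \|^{2p} \leq C \int_0^{[t]_m^-} \| \xi_m(s) - Y_m([s]_m^-) \|^{2p} \, ds$. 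Since the right-hand side is nondecreasing in $[t]_m^-$ and $[t]_m^- \leq v$ for $t \in [0,v]$, taking the supremum over $t \in [0,v]$ and then the expectation gives
\[
\bbe \bigg[ \sup_{t \in [0,v]} \| \zeta_{m,5}(t) \|^{2p} \bigg] \leq C \int_0^v \bbe \big[ \| \xi_m(s) - Y_m([s]_m^-) \|^{2p} \big] \, ds.
\]

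The decisive step is the decomposition $\xi_m(s) - Y_m([s]_m^-) = \big( \xi_m(s) - \xi_m([s]_m^-) \big) + \big( \xi_m([s]_m^-) - Y_m([s]_m^-) \big)$. The second summand is controlled directly, since $[s]_m^- \leq s$ implies $\| \xi_m([s]_m^-) - Y_m([s]_m^-) \|^{2p} \leq \sup_{t \in [0,s]} \| \xi_m(t) - Y_m(t) \|^{2p}$, producing precisely the Gronwall integral $\int_0^v \bbe [ \sup_{t \in [0,u]} \| \xi_m(t) - Y_m(t) \|^{2p} ] \, du$ appearing in the claimed bound. It therefore remains to show that the first summand, the \emph{freezing error} of the Wong-Zakai approximation, satisfies $\bbe [ \sup_{s \in [0,T]} \| \xi_m(s) - \xi_m([s]_m^-) \|^{2p} ] \leq C \delta_m^{p-1}$, which, after integrating over $[0,v]$, contributes the term $C \delta_m^{p-1}$.

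To estimate this freezing error I would split once more via $\bar{\xi}_m$, writing $\xi_m(s) - \xi_m([s]_m^-) = \big( \xi_m(s) - \bar{\xi}_m(s) \big) + \big( \bar{\xi}_m(s) - \xi_m([s]_m^-) \big)$. The first part is exactly Proposition \ref{prop-Euler-WZ-pre}, which gives $C \delta_m^{p-1}$. For the second part, note that $\bar{\xi}_m(s) - \xi_m([s]_m^-) = (S_{s - [s]_m^-} - {\rm Id}) \xi_m([s]_m^-)$, so Lemma \ref{lemma-est-semigroup} yields the pointwise bound $\| \bar{\xi}_m(s) - \xi_m([s]_m^-) \| \leq C \delta_m \| \xi_m([s]_m^-) \|_{\cald(A)}$.

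I expect this last estimate to be the main obstacle, because the graph norm $\| \xi_m \|_{\cald(A)}$ is \emph{not} bounded uniformly in $m$: by Corollary \ref{cor-E-B-dot} (applied with $q = p$) it only satisfies $\bbe [ \sup_{s \in [0,T]} \| \xi_m(s) \|_{\cald(A)}^{2p} ] \leq C \delta_m^{-(p+1)}$. The crucial observation is that this blow-up is exactly compensated by the factor $\delta_m^{2p}$ coming from the semigroup estimate, since $\delta_m^{2p} \cdot \delta_m^{-(p+1)} = \delta_m^{p-1}$; hence the second part also contributes $C \delta_m^{p-1}$. The only care needed is to apply the graph-norm moment bound with the correct exponent so that the powers of $\delta_m$ balance precisely. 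Combining the two summands gives the freezing-error estimate, and together with the first paragraph this completes the proof.
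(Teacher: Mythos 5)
Your proof is correct, but it takes a genuinely different route through the middle of the argument. Both you and the paper start identically: Lipschitz continuity of $b$ plus H\"older reduce the claim to bounding $\int_0^v \bbe [ \| \xi_m(s) - Y_m([s]_m^-) \|^{2p} ] \, ds$. The paper then telescopes through the \emph{Euler} side, writing $\xi_m(s) - Y_m([s]_m^-) = (\xi_m(s) - Y_m(s)) + (Y_m(s) - \bar{Y}_m(s)) + (\bar{Y}_m(s) - Y_m([s]_m^-))$ and invoking Proposition \ref{prop-Euler-1} and Lemma \ref{lemma-Euler-event-1}; the point is that $Y_m$ has graph-norm moments bounded uniformly in $m$, so its freezing error $\bar{Y}_m(t) - Y_m([t]_m^-)$ is of the higher order $\delta_m^{2p}$ and the $\delta_m^{p-1}$ in the final bound comes solely from the one-step stochastic convolution $Y_m - \bar{Y}_m$. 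You instead telescope through the \emph{Wong-Zakai} side, $\xi_m(s) - Y_m([s]_m^-) = (\xi_m(s) - \bar{\xi}_m(s)) + (\bar{\xi}_m(s) - \xi_m([s]_m^-)) + (\xi_m([s]_m^-) - Y_m([s]_m^-))$, using Proposition \ref{prop-Euler-WZ-pre} for the first piece and, for the second, the compensation $\delta_m^{2p} \cdot \delta_m^{-(p+1)} = \delta_m^{p-1}$ between Lemma \ref{lemma-est-semigroup} and Corollary \ref{cor-E-B-dot} applied with $q = p$. The exponents do balance exactly as you claim, so your argument closes; it is, however, more delicate than the paper's, since it leans on an exact cancellation against the $m$-dependent blow-up of $\bbe [ \sup_{t} \| \xi_m(t) \|_{\cald(A)}^{2p} ]$, whereas the paper's choice of freezing $Y_m$ avoids any quantity that degenerates as $m \to \infty$ and leaves more room to spare.
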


\begin{proof}
Since $b$ is Lipschitz continuous, we obtain
\begin{align*}
&\mathbb{E} \bigg[ \sup_{t \in [0,v]} \| \zeta_{m,5}(t) \|^{2p} \bigg] = \mathbb{E} \Bigg[ \sup_{t \in [0,v]} \bigg\| \int_0^{[t]_m^-} S_{t-s} \big( b(\xi_m(s)) - b(Y_m([s]_m^-)) \big) ds \bigg\|^{2p} \Bigg]
\\ &\leq C \int_0^v \bbe[ \| \xi_m(s) - Y_m([s]_m^-) \|^{2p} ] ds \leq C \big( I_1(v) + I_2(v) + I_3(v) \big),
\end{align*}
where
\begin{align*}
I_1(v) &= \int_0^v \bbe \bigg[ \sup_{t \in [0,u]} \| \xi_m(t) - Y_m(t) \|^{2p} \bigg] du,
\\ I_2(v) &= \int_0^v \bbe \bigg[ \sup_{t \in [0,u]} \| Y_m(t) - \bar{Y}_m(t) \|^{2p} \bigg] du,
\\ I_3(v) &= \int_0^v \bbe \bigg[ \sup_{t \in [0,u]} \| \bar{Y}_m(t) - Y_m([t]_m^-) \|^{2p} \bigg] du.
\end{align*}
Therefore, applying Proposition \ref{prop-Euler-1} and Lemma \ref{lemma-Euler-event-1} completes the proof.
\end{proof}

Now, the proof of Theorem \ref{thm-Euler-WZ} is an immediate consequence of the decompositions (\ref{decomp-Euler-WZ}), (\ref{xi-part-3}), Propositions \ref{prop-Euler-1}, \ref{prop-Euler-WZ-pre}, Propositions \ref{prop-WZ-1}--\ref{prop-WZ-5} and Gronwall's inequality.

\section{An example: The HJMM equation}\label{sec-HJMM}

As an example of our main result, let us consider the HJMM (Heath-Jarrow-Morton-Musiela) equation from mathematical finance. This is a SPDE which models the term structure of interest rates in a market of zero coupon bonds.

Let us briefly introduce the model. A zero coupon bond with maturity $T$ is a financial asset that pays the holder one monetary unit at $T$. Its price at $t \leq T$ can be written as the continuous discounting of one unit of the domestic currency
\begin{align*}
P(t,T) = \exp \bigg( -\int_t^T f(t,s) ds \bigg),
\end{align*}
where $f(t,T)$ is the rate prevailing at time $t$ for instantaneous borrowing at time $T$, also called the forward rate for date $T$.

After transforming the original HJM (Heath-Jarrow-Morton) dynamics of the forward rates (see \cite{HJM}) by means of the Musiela parametrization $r_t(x) = f(t,t+x)$ (see \cite{Musiela}), the instantaneous forward rate $r_t(x)$ with maturity time $x$ from observing time $t$ can be considered as a mild solution to the HJMM (Heath-Jarrow-Morton-Musiela) equation
\begin{align}\label{HJMM}
\left\{
\begin{array}{rcl}
dr(t) & = & \big( \frac{\partial}{\partial x}r(t) +
\alpha(r(t),v(t)) \big) dt
+\sum_{j=1}^r\gamma_j(r(t),v(t))dB^j(t) \medskip
\\ v(t) & = & \mu(v(t))dt+\sum_{j=1}^r\lambda_j(v(t))dB^j(t) \medskip
\\ r(0) & = & r_0 \medskip
\\ v(0) & = & v_0.
\end{array}
\right.
\end{align}
Note that we consider the HJMM equation (\ref{HJMM}) with stochastic volatility. More precisely, the functions $(\gamma_j)_{j=1,\ldots,r}$ are called volatility functions; they represent the degree of variation of the instantaneous forward rate. If $v(t)=t$, $t \in \bbr_+$, which corresponds to $\mu\equiv1$ and $\lambda_j\equiv0$ for all $j=1,\ldots,r$, then the model is called a local volatility model. In this case,
the volatility of the instantaneous forward rate with each maturity depends on the observation time and the interest rate curve itself.
If $v(t)$ is not deterministic, then the model is called a stochastic volatility model, which fits more with the real interest rate market.

In order to ensure absence of arbitrage in the bond market, we consider the HJMM equation (\ref{HJMM}) under a martingale measure. Then the drift term is given by the so-called HJM drift condition
\begin{align}\label{HJM-drift}
\alpha(h,v) = \sum_{j=1}^r \gamma_j(h,v) \int_0^{\bullet} \gamma_j(h,v)(\eta) d\eta.
\end{align}
We refer, e.g., to \cite{fillnm} for further details concerning the derivation of the HJMM equation (\ref{HJMM}) and the HJM drift condition (\ref{HJM-drift}).

The precise mathematical formulation of our model is as follows. We fix an arbitrary constant $\beta > 0$. Let $\tilde{H}_{\beta}$ be the space of all absolutely continuous functions $h : \mathbb{R}_+ \rightarrow \mathbb{R}$ such that
\begin{align}\label{def-norm}
\| h \|_{\beta} := \bigg( |h(0)|^2 + \int_{\mathbb{R}_+} |h'(x)|^2
e^{\beta x} dx \bigg)^{\frac{1}{2}} < \infty.
\end{align}
This kind of space was introduced in \cite[Sec. 5.1]{fillnm}, where the following properties have been proven:
\begin{itemize}
\item The space $(\tilde{H}_{\beta},\| \cdot \|_{\beta})$ is a separable
Hilbert space.

\item For each $x \in \mathbb{R}_+$ the point
evaluation $h \mapsto h(x) : \tilde{H}_{\beta} \rightarrow \mathbb{R}$ is a
continuous linear functional.

\item The semigroup $(\tilde{S}_t)_{t\geq0}$ of right shifts in $\tilde{H}_{\beta}$ defined by
$\tilde{S}_th(x)=h(x+t)$ is a $C_0$-semigroup
with infinitesimal generator $\tilde{A}$ given by
$\tilde{A}h(x)=h'(x)$ on the domain
${\cald}_{\beta}(\tilde{A})=\{h\in \tilde{H}_{\beta} : h'\in\tilde{H}_{\beta}\}$.

\item For each $h \in \tilde{H}_{\beta}$ the limit $h(\infty) := \lim_{x \rightarrow \infty} h(x)$ exists, and
\begin{align*}
\tilde{H}_{\beta}^0 := \{ h \in \tilde{H}_{\beta} : h(\infty) = 0 \}
\end{align*}
is a closed subspace of $\tilde{H}_{\beta}$.
\end{itemize}
Let us fix an additional index $\beta' > \beta$. We have the following additional result.

\begin{lemma}\label{lemma-mult}
The following statements are true:
\begin{enumerate}
\item The multiplication operator $m : \tilde{H}_{\beta} \times \tilde{H}_{\beta} \to \tilde{H}_{\beta}$ given by $m(h,g) = hg$ is a continuous bilinear operator.

\item We have $m(\cald_{\beta}(\tilde{A}) \times \cald_{\beta}(\tilde{A})) \subset \cald_{\beta}(\tilde{A})$, and the restriction of $m$ is a continuous bilinear operator with respect to the graph norm.

\item We have $\tilde{H}_{\beta'} \subset \tilde{H}_{\beta}$ with continuous embedding.

\item The integral operator $\mathcal{I} : \tilde{H}_{\beta'}^0 \to \tilde{H}_{\beta}$ given by $\mathcal{I}
h := \int_0^{\bullet} h(\eta) d\eta$ is a continuous linear operator.

\item We have $\mathcal{I}(\cald_{\beta'}(\tilde{A}) \cap \tilde{H}_{\beta'}^0) \subset \cald_{\beta}(\tilde{A})$, and the restriction is a continuous linear operator with respect to the corresponding graph norms.
\end{enumerate}
\end{lemma}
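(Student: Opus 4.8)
The plan is to reduce all five statements to two genuine estimates---the product bound (1) and the integral bound (4)---and to obtain the remaining parts as algebraic consequences of the Leibniz rule and of the monotonicity of the weights $e^{\beta x}$. The common ingredient is the uniform sup-norm estimate $\| h \|_\infty \le C_\beta \| h \|_\beta$, valid for all $h \in \tilde{H}_\beta$, which I would obtain by writing $h(x) = h(0) + \int_0^x h'(\eta)\,d\eta$ and applying the Cauchy--Schwarz inequality to $\int_0^x |h'(\eta)| e^{\beta\eta/2} e^{-\beta\eta/2}\,d\eta$; this gives $|h(x)| \le |h(0)| + \beta^{-1/2}\big( \int_0^\infty |h'|^2 e^{\beta\eta}\,d\eta \big)^{1/2}$ independently of $x$.

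For (1), since $(hg)(0) = h(0)g(0)$ and $(hg)' = h'g + hg'$, I would estimate $\int_0^\infty |h'g + hg'|^2 e^{\beta x}\,dx \le 2\int_0^\infty |h'|^2 |g|^2 e^{\beta x}\,dx + 2\int_0^\infty |h|^2 |g'|^2 e^{\beta x}\,dx$, bound the factors $|g|$ and $|h|$ by $C_\beta \| g \|_\beta$ and $C_\beta \| h \|_\beta$ using the sup estimate, and pull these constants out of the integrals; together with $|h(0)g(0)| \le \| h \|_\beta \| g \|_\beta$ this yields $\| hg \|_\beta \le C \| h \|_\beta \| g \|_\beta$. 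Statement (2) is then immediate: if $h,g \in \cald_\beta(\tilde{A})$ then $h',g' \in \tilde{H}_\beta$, so (1) gives $h'g, hg' \in \tilde{H}_\beta$, whence $(hg)' \in \tilde{H}_\beta$ and $hg \in \cald_\beta(\tilde{A})$; applying the bound from (1) separately to $hg$, $h'g$ and $hg'$ controls both $\| hg \|_\beta$ and $\| (hg)' \|_\beta$ by the product of the graph norms. Statement (3) is the weight comparison $e^{\beta x} \le e^{\beta' x}$ on $\bbr_+$ for $\beta' > \beta$, which gives $\| h \|_\beta \le \| h \|_{\beta'}$ with embedding constant one.

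The heart of the lemma is (4). Here I would use the hypothesis $h(\infty) = 0$ to write $h(x) = -\int_x^\infty h'(\eta)\,d\eta$, and Cauchy--Schwarz against the $\beta'$-weight gives $|h(x)|^2 \le (\beta')^{-1} e^{-\beta' x} \int_x^\infty |h'(\eta)|^2 e^{\beta'\eta}\,d\eta \le (\beta')^{-1} e^{-\beta' x} \| h \|_{\beta'}^2$. Since $\mathcal{I}h$ vanishes at $0$ and has derivative $h$, we obtain $\| \mathcal{I}h \|_\beta^2 = \int_0^\infty |h(x)|^2 e^{\beta x}\,dx \le (\beta')^{-1} \| h \|_{\beta'}^2 \int_0^\infty e^{-(\beta'-\beta)x}\,dx = \big( \beta'(\beta'-\beta) \big)^{-1} \| h \|_{\beta'}^2$, which is finite precisely because $\beta' > \beta$; this proves (4). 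For (5), given $h \in \cald_{\beta'}(\tilde{A}) \cap \tilde{H}_{\beta'}^0$, statement (4) yields $\mathcal{I}h \in \tilde{H}_\beta$ while $(\mathcal{I}h)' = h \in \tilde{H}_\beta$ by (3), so $\mathcal{I}h \in \cald_\beta(\tilde{A})$, and the graph-norm bound $\| \mathcal{I}h \|_{\cald_\beta(\tilde{A})}^2 = \| \mathcal{I}h \|_\beta^2 + \| h \|_\beta^2 \le C \| h \|_{\beta'}^2$ follows from (4) and (3).

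I expect (4) to be the only real obstacle: the remaining parts are algebraic consequences of the product rule or of the monotonicity of the weights, whereas (4) genuinely requires the decay encoded in $h(\infty) = 0$---converted into the tail representation $h(x) = -\int_x^\infty h'$---together with the strict inequality $\beta' > \beta$ in order to control the weighted $L^2$-norm of $h$ itself rather than merely of $h'$.
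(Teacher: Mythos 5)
Your proof is correct, but it takes a different route from the paper in the sense that the paper gives no argument at all: Lemma \ref{lemma-mult} is proved there simply by citing Theorem 4.1 and Lemmas 4.2, 4.3 of the reference \cite{Tappe-Wiener}, where these facts about the spaces $\tilde{H}_\beta$ are established. What you have written is essentially a self-contained reconstruction of those external results, and the two key estimates you isolate are indeed the right ones: the uniform bound $\|h\|_\infty \le C_\beta \|h\|_\beta$ obtained from $h(x) = h(0) + \int_0^x h'$ and Cauchy--Schwarz against the weight $e^{\beta\eta}$, which drives parts (1)--(2) via the Leibniz rule, and the tail representation $h(x) = -\int_x^\infty h'(\eta)\,d\eta$ for $h \in \tilde{H}_{\beta'}^0$, which converts the hypothesis $h(\infty)=0$ and the strict gap $\beta' > \beta$ into the pointwise decay $|h(x)|^2 \le (\beta')^{-1} e^{-\beta' x}\|h\|_{\beta'}^2$ needed to make $\int_0^\infty |h(x)|^2 e^{\beta x}\,dx$ finite in part (4). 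Your identification of (4) as the only step requiring a genuinely new idea (control of the weighted $L^2$-norm of $h$ itself rather than of $h'$, which is exactly why the integral operator must lose an index from $\beta'$ to $\beta$) matches the structure of the cited results. The argument for (5) is also fine; note only that, as you implicitly observe, membership $\mathcal{I}h \in \cald_\beta(\tilde{A})$ already follows from $h \in \tilde{H}_{\beta'}^0$ alone, the hypothesis $h \in \cald_{\beta'}(\tilde{A})$ being needed only so that the graph-norm continuity statement is the natural one. What your approach buys is a short, elementary, self-contained verification; what the paper's citation buys is brevity and consistency with the companion reference where these function spaces are studied systematically.
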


\begin{proof}
This is a consequence of \cite[Thm. 4.1 and Lemmas 4.2, 4.3]{Tappe-Wiener}.
\end{proof}

Now, let us assume the following.

\begin{assumption}\label{HJM-assumption}
We suppose that the following conditions are satisfied:
\begin{itemize}
\item We have $\gamma_j \in C_b^2(\tilde{H}_{\beta} \times \bbr; \tilde{H}_{\beta'}^0)$ for each $j=1,\ldots,r$.

\item We have $\gamma_j(\cald_{\beta}(\tilde{A}) \times \bbr) \subset \cald_{\beta'}(\tilde{A})$ for each $j=1,\ldots,r$.

\item We have $\gamma_j|_{\cald_{\beta}(\tilde{A}) \times \bbr} \in C_b^2(\cald_{\beta}(\tilde{A}) \times \bbr; \cald_{\beta'}(\tilde{A}))$ for each $j=1,\ldots,r$ with respect to the corresponding graph norms.

\item $\mu : \bbr \to \bbr$ is Lipschitz continuous and bounded.

\item We have $\lambda_j \in C_b^2(\bbr)$ for each $j=1,\ldots,r$.
\end{itemize}
\end{assumption}

Now we can consider the HJM model in our SPDE framework, and rewrite the HJMM equation (\ref{HJMM}) as
\begin{align}\label{HJMM-2}
\left\{
\begin{array}{rcl}
dX(t) & = & \big( AX(t)+b(X(t)) \big) dt+\sum_{j=1}^r\sigma_j(X(t))dB^j(t) \medskip
\\ X(0) & = & x_0
\end{array}
\right.
\end{align}
on the separable Hilbert space $H=\tilde{H}_{\beta}\times{\mathbb{R}}$ with the notation
\begin{align*}
&x_0:=\begin{pmatrix}r_0\\v_0\end{pmatrix}\in \cald(A) := {\cald}_{\beta}(\tilde{A})\times\mathbb{R},\\
&X(t):=\begin{pmatrix}r(t)\\v(t)\end{pmatrix}\in H,\\
&S_tx:=\begin{pmatrix}\tilde{S}_th\\v\end{pmatrix}
\text{ for }x=\begin{pmatrix}h\\v\end{pmatrix}\in H,\\
&Ax:=\begin{pmatrix}\tilde{A}h\\0\end{pmatrix}
\text{ for }x=\begin{pmatrix}h\\v\end{pmatrix}\in \cald(A) := {\cald}_{\beta}(\tilde{A})\times\mathbb{R},\\
&b:=\begin{pmatrix}b^1\\ b^2\end{pmatrix}\colon H\to H,
\quad b^1(h,v)=\alpha(h,v),\quad b^2(h,v)=\mu(v),\\
&\sigma_j
:=\begin{pmatrix}\sigma_j^1\\ \sigma_j^2\end{pmatrix}\colon H\to H,
\quad\sigma_j^1(h,v)=\gamma_j(h,v),\quad\sigma_j^2(h,v)=\lambda_j(v).
\end{align*}
Note that the HJM drift term (\ref{HJM-drift}) has the representation
\begin{align*}
\alpha = \sum_{j=1}^r m ( \gamma_j, \mathcal{I} \gamma_j ).
\end{align*}
Taking into account the Leibniz rule (see, for example \cite[Thm. 2.4.4]{Abraham}), by virtue of Lemma \ref{lemma-mult} and Assumption \ref{HJM-assumption} we obtain that Assumptions \ref{ass-H} and \ref{ass-D} are fulfilled. Consequently, Theorem \ref{thm-main} applies to the HJMM equation (\ref{HJMM-2}) and provides the convergence rate for the corresponding Wong-Zakai approximations. By Proposition~\ref{prop-ex-sol-WZ} and identity (\ref{derivative}) the Wong-Zakai approximations $\xi_m = (\varrho_m,\zeta_m)$ for $m \in \bbn$ are the solutions to the integral equations
\begin{align*}
\varrho_m(t,x) &= \varrho_m(k \delta_m, x+t-k\delta_m) + \int_{k \delta_m}^t \bigg( \alpha(\varrho_m(t),\zeta_m(s),x+t-s)
\\ &\quad - \beta(\varrho_m(s),\zeta_m(s),x+t-s) 
\\ &\quad + \sum_{j=1}^r \frac{B^j((k+1)\delta_m)-B^j(k\delta_m)}{\delta_m} \gamma_j(\varrho_m(s),\zeta_m(s),x+t-s) \bigg) ds,
\\ \zeta_m(t) &= \zeta_m(k \delta_m) + \int_{k \delta_m}^t \bigg( \mu(\zeta_m(s)) - \frac{1}{2} \sum_{j=1}^r \lambda_j(\zeta_m(s)) \lambda_j'(\zeta_m(s))
\\ &\quad + \sum_{j=1}^r \frac{B^j((k+1)\delta_m)-B^j(k\delta_m)}{\delta_m} \lambda_j(\zeta_m(s)) \bigg) ds
\end{align*}
for $t \in [k \delta_m, (k+1) \delta_m]$ and $k=0,\ldots,m-1$. Note that $x \in \bbr_+$ in the second argument of $\varrho_m(t,x)$ denotes the point evaluation of the interest rate curve. Moreover, we have used the notation
\begin{align*}
\beta(r,v) := \frac{1}{2} \sum_{j=1}^r \big( D_r \gamma_j(r,v) \gamma_j(r,v) + D_v \gamma_j(r,v) \lambda_j(v) \big).
\end{align*}

\section{Further examples}\label{sec-natural}

In this section, we treat two further examples arising from natural sciences. Before presenting these examples, let us recall an auxiliary result for the infinitesimal generators of strongly continuous semigroups. As in the previous sections, let $A : \cald(A) \subset H \to H$ be the infinitesimal generator of a $C_0$-semigroup $(S_t)_{t \geq 0}$ on the separable Hilbert space $H$.

\begin{lemma}\label{lemma-Pazy}\cite[Thm. 3.1.1]{Pazy}
Let $R \in L(H)$ be a continuous linear operator, and let the linear operator $B : \cald(B) \subset H \to H$ be given by $\cald(B) := \cald(A)$ and $B := A + R$. Then $B$ is the generator of a $C_0$-semigroup on $H$. 
\end{lemma}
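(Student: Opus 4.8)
The plan is to construct the perturbed semigroup explicitly by a variation-of-constants argument rather than to verify the Hille--Yosida resolvent estimates for $B$ directly, since a bounded perturbation $R$ interacts cleanly with the integral formulation while the required uniform bounds on the powers $(\lambda - B)^{-n}$ would be somewhat awkward to extract. As a starting point, recall from \cite[Thm. 2.2]{Pazy} that there are constants $M \geq 1$ and $\omega \in \bbr$ with $\| S_t \| \leq M e^{\omega t}$ for all $t \geq 0$; these will control every estimate below.

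First I would introduce the candidate semigroup $(T_t)_{t \geq 0}$ as the solution of the integral equation
\begin{align*}
T_t x = S_t x + \int_0^t S_{t-s} R \, T_s x \, ds, \quad x \in H,
\end{align*}
and solve it by Picard iteration: set $T_t^{(0)} := S_t$ and $T_t^{(n)} := \int_0^t S_{t-s} R \, T_s^{(n-1)} \, ds$. A straightforward induction yields the factorial bound $\| T_t^{(n)} \| \leq M e^{\omega t} (M \| R \| t)^n / n!$, so that the Dyson--Phillips series $T_t := \sum_{n=0}^\infty T_t^{(n)}$ converges in operator norm, uniformly for $t$ in compact intervals, and satisfies $\| T_t \| \leq M e^{(\omega + M \| R \|) t}$. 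Strong continuity and $T_0 = {\rm Id}$ then follow from the corresponding properties of $(S_t)_{t \geq 0}$ together with this uniform convergence.

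Next I would establish the semigroup property $T_{t+s} = T_t T_s$. The most economical route is to observe that, for fixed $s \geq 0$, both $t \mapsto T_{t+s} x$ and $t \mapsto T_t (T_s x)$ solve the same Volterra integral equation with inhomogeneous term $S_t (T_s x)$ (for the first process this is seen by splitting the integral over $[0,t+s]$ at $s$ and shifting the variable of integration), so that uniqueness of solutions, a consequence of Gronwall's inequality applied to the difference of two solutions, forces them to coincide.

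Finally, the main obstacle is to identify the generator of $(T_t)_{t \geq 0}$ as $A + R$ with domain exactly $\cald(A)$. For the action, I would differentiate the integral equation at $t = 0$: for $x \in \cald(A)$ one has $\lim_{t \downarrow 0} t^{-1} (S_t x - x) = A x$, while $\lim_{t \downarrow 0} t^{-1} \int_0^t S_{t-s} R \, T_s x \, ds = R x$ for every $x \in H$ by continuity of the integrand, so the generator $G$ of $(T_t)_{t \geq 0}$ satisfies $\cald(A) \subset \cald(G)$ and $G|_{\cald(A)} = A + R$. For the reverse inclusion $\cald(G) \subset \cald(A)$, I would pass to resolvents: for $\lambda$ large enough that $\| R (\lambda - A)^{-1} \| < 1$, the factorisation $\lambda - (A+R) = ({\rm Id} - R(\lambda - A)^{-1})(\lambda - A)$ shows that $\lambda - (A+R)$ is a bijection from $\cald(A)$ onto $H$; since $G$ is closed, extends $A + R$, and $\lambda - G$ is likewise a bijection from $\cald(G)$ onto $H$, injectivity of $\lambda - G$ forces $\cald(G) = \cald(A)$. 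This completes the identification and shows that $B = A + R$ generates the $C_0$-semigroup $(T_t)_{t \geq 0}$.
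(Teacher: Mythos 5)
Your proof is correct. Note that the paper does not prove this lemma at all --- it simply cites \cite[Thm.\ 3.1.1]{Pazy} --- and your argument (Dyson--Phillips series for the variation-of-constants equation, Gronwall for uniqueness and the semigroup law, differentiation at $t=0$ for the inclusion $\cald(A)\subset\cald(G)$, and the resolvent factorisation $\lambda-(A+R)=({\rm Id}-R(\lambda-A)^{-1})(\lambda-A)$ for the reverse inclusion) is essentially the standard proof given in that cited reference, so there is nothing to compare beyond confirming that your reconstruction is complete and sound.
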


As our first example of this section, we consider the stochastic quantization of the free Euclidean quantum field (cf. \cite[Ex.~1.0.1]{Prevot-Roeckner})
\begin{align}\label{SPDE-quantum}
\left\{
\begin{array}{rcl}
dX(t) & = & (\Delta - m^2) X(t) dt + \sum_{j=1}^r \sigma_j dB^j(t)
\medskip
\\ X(0) & = & x_0,
\end{array}
\right.
\end{align}
where $m \in \bbr_+$ denotes ``mass'', and the volatilities $\sigma_1,\ldots,\sigma_r \in \cald(\Delta)$ are constant. Here we choose the state space $H = L^2(\bbr^d)$, and the Laplace operator $\Delta$ is defined on the domain $\cald(\Delta) = W^2(\bbr^d)$. Taking into account Lemma \ref{lemma-Pazy}, we see that Assumptions \ref{ass-H} and \ref{ass-D} are fulfilled, and hence Theorem \ref{thm-main} applies to the SPDE (\ref{SPDE-quantum}) and provides the convergence rate for the corresponding Wong-Zakai approximations.

Another example is the stochastic cable equation (cf. \cite[Ex.~0.8]{Da_Prato})
\begin{align}\label{SPDE-cable}
\left\{
\begin{array}{rcl}
dV(t) & = & \frac{1}{\tau} (\lambda^2 \Delta V(t) - V(t)) dt + \sum_{j=1}^r \sigma_j dB^j(t)
\medskip
\\ V(0) & = & x_0,
\end{array}
\right.
\end{align}
where $\lambda > 0$ denotes the length constant, $\tau > 0$ denotes the time constant of the electric cable, and the volatilities $\sigma_1,\ldots,\sigma_r \in \cald(\Delta)$ are constant. Here we choose the state space $H = L^2((0,\pi))$, and the Laplace operator $\Delta$ is defined on the domain $\cald(\Delta) = W^2((0,\pi)) \cap H_0^1((0,\pi))$. Taking into account Lemma \ref{lemma-Pazy}, we see that Assumptions \ref{ass-H} and \ref{ass-D} are fulfilled, and hence Theorem \ref{thm-main} applies to the SPDE (\ref{SPDE-cable}) and provides the convergence rate for the corresponding Wong-Zakai approximations.

\section*{Acknowledgement}

We are grateful to Josef Teichmann for initiating this research topic, and for his invaluable assistance and discussions. We also wish to thank Ludwig Baringhaus for his advice regarding the reference \cite{Wilks} used for the calculation of the expectation in Lemma~\ref{lemma-chi-square}.

We are also grateful to an anonymous referee for valuable comments and suggestions.

\nocite{*}

\bibliographystyle{plain}

\bibliography{WZ}

\end{document}